\newcommand{\N}{\mathbb{N}}
\newcommand{\Z}{\mathbb{Z}}
\newcommand{\R}{\mathbb{R}}
\newcommand{\C}{\mathbb{C}}
\newcommand{\GL}{\mr{GL}}
\newcommand{\SL}{\mr{SL}}
\newcommand{\U}{\mr{U}}
\newcommand{\SU}{\mr{SU}}
\newcommand{\su}{\mathfrak{su}}
\renewcommand\P{\mathbb{P}}
\newcommand{\mf}{\mathfrak}
\newcommand{\mr}{\mathrm}
\newcommand{\mb}{\mathbf}
\newcommand{\mc}{\mathcal}
\newcommand{\End}{\mathop{\rm End}\nolimits}
\newcommand{\Id}{\mathop{\rm Id}\nolimits}
\renewcommand{\ker}{\mathop{\rm ker}\nolimits}
\newcommand{\dom}{\operatorname{dom}}
\renewcommand{\Re}{\mathop{\rm Re}\nolimits}
\renewcommand{\Im}{\mathop{\rm Im}\nolimits}
\renewcommand{\deg}{\mathop{\rm deg}\nolimits}
\newcommand{\rk}{\mathop{\rm rk}\nolimits}
\newcommand{\Tr}{\mathop{\rm Tr}\nolimits}
\newcommand{\ad}{\mathop{\rm ad}\nolimits}
\newcommand{\del}{\partial}
\newcommand{\delb}{\bar{\partial}}
\newcommand{\note}[1]{\marginpar{\raggedright\if@twoside\ifodd\c@page\raggedleft\fi\fi\sf\scriptsize \red{RMK: #1}}}
\newcommand\red[1]{\textcolor{red}{#1}}
\newcommand{\be}{\begin{equation}}
\newcommand{\ben}{\begin{equation}\nonumber}
\newcommand{\ee}{\end{equation}}
\newcommand{\bp}{\begin{para}}
\newcommand{\ep}{\end{para}}
\newcommand{\RR}{\mathbb R}
\newcommand{\CC}{\mathbb C}
\newcommand{\fid}{\mathrm{fid}}
\newcommand{\supp}{\mathrm{supp}\,}
\newcommand{\calC}{{\mathcal C}}
\newcommand{\calM}{{\mathcal M}}
\newcommand{\calO}{{\mathcal O}}
\newcommand{\appr}{\mathrm{app}}
\newcommand{\exterior}{\mathrm{ext}}
\newcommand{\interior}{\mathrm{int}}
\newcommand{\sell}{\mathfrak{sl}}
\def\pr{\textrm{pr}}
\def\U{\mathrm{U}}
\def\GL{\mathrm{GL}}
\def\Stab{\operatorname{Stab}}
\def\D{\mathbb{D}}
\newtheorem{proposition}{\textbf{Proposition}}
\newtheorem{lemma}[proposition]{\textbf{Lemma}}
\newtheorem{corollary}[proposition]{\textbf{Corollary}}
\newtheorem{theorem}[proposition]{\textbf{Theorem}}
\theoremstyle{definition}
\newtheorem{definition}{\textbf{Definition}}
\newtheorem*{example*}{\textbf{Example}}
\newtheorem*{remark}{\textbf{Remark}}
\theoremstyle{remark}      
\newtheorem*{rem*}{Remark}
\newtheorem{step}{Step}
\newcounter{para}[section]
\newenvironment{para}[2][]{\refstepcounter{para}\noindent\ignorespaces{\bf #1\thepara. #2.} \rmfamily}{\noindent\ignorespacesafterend\bigskip}
\numberwithin{proposition}{section}
\numberwithin{definition}{section}
\begin{document}
\title{Ends of the moduli space of Higgs bundles}
\date{\today}

\author{Rafe Mazzeo}
\address{Department of Mathematics, Stanford University, Stanford, CA 94305 USA}
\email{mazzeo@math.stanford.edu}

\author{Jan Swoboda}
\address{Mathematisches Institut der LMU M\"unchen\\Theresienstra{\ss}e 39\\D--80333 M\"unchen\\ Germany}
\email{swoboda@math.lmu.de}
\address{Max-Planck-Institut f\"ur Mathematik\\Vivatsgasse 7\\D--53111 Bonn\\ Germany}
\email{swoboda@mpim-bonn.mpg.de}

\author{Hartmut Wei\ss}
\address{Mathematisches Institut der LMU M\"unchen\\ Theresienstra{\ss}e 39\\ D--80333 M\"unchen\\ Germany}
\email{weiss@math.lmu.de}

\author{Frederik Witt} 
\address{Mathematisches Institut der Universit\"at M\"unster\\ Einsteinstra{\ss}e 62\\ D--48149 M\"unster\\ Germany}
\email{frederik.witt@uni-muenster.de}

\thanks{RM supported by NSF Grant DMS-1105050, JS supported by DFG Grant Sw 161/1-1}

\maketitle

\begin{abstract}
We associate to each stable Higgs pair $(A_0,\Phi_0)$ on a compact Riemann surface $X$ a singular limiting configuration $(A_\infty,\Phi_\infty)$, assuming that $\det \Phi$ has only simple zeroes. We then prove a desingularization theorem by constructing a family of 
solutions $(A_t,t\Phi_t)$ to Hitchin's equations which converge to this limiting configuration as $t \to \infty$. This provides a new proof, via
gluing methods, for elements in the ends of the Higgs bundle moduli space and identifies a dense open subset of the 
boundary of the compactification of this moduli space. 
\end{abstract}

%
%
%
\section{Introduction}
The moduli space of solutions to Hitchin's equations on a compact Riemann surface
occupies a privileged position at the cross-roads of gauge-theoretic geometric
analysis, geometric topology and the emerging field of higher Teichm\"uller theory. 
These are equations for a pair $(A,\Phi)$, where $A$ is a unitary connection on a Hermitian vector
bundle $E$ over a Riemann surface $X$, and $\Phi$ an $\End(E)$-valued $(1,0)$-form (the `Higgs field'). 
We will mostly be concerned with the fixed determinant case, i.e.\ we consider only connections which 
induce a fixed connection on the determinant line bundle of $E$ and trace-free Higgs fields. Then the equations read
\begin{equation}
\begin{array}{rl}
&F_A^\perp + [ \Phi \wedge \Phi^*]  = 0 \\[0.5ex]
&\bar{\partial}_A \Phi = 0.
\end{array}
\label{he1}
\end{equation}
Here $F_A^\perp$ is the trace-free part of the curvature of $A$, which is a 2-form with values in the skew-Hermitian endomorphisms 
of $E$, and $\Phi^*$ is computed with respect to the Hermitian metric on $E$. We always assume that $X$ is compact below, and
we also assume that the genus of $X$ is bigger than $1$. 

The initial motivation for these is that, when $\Phi$ is the trivial rank $2$ bundle, 
they are the two-dimensional reduction of the standard self-dual Yang-Mills system, i.e.,
from $X \times \RR^2$ to $X$. However, these equations make sense for higher rank 
nontrivial bundles, and have also been studied when $X$ is a higher dimensional
K\"ahler manifold~\cite{si88,si92}. Beyond this initial presentation, they can also be studied by 
more purely algebraic and topological methods in terms of representations of (a central extension of) the fundamental group of $X$ into the Lie group $\SL(r,\C)$, $r = \mathrm{rk}(E)$ (see~\cite{go12} and references therein).  

In his initial paper on these equations \cite{hi87}, Hitchin established the existence of
a unique solution of these equations in the complex gauge orbit of any given initial
pair $(A_0, \Phi_0)$ which satisfy a stability condition slightly weaker than the standard
slope stability condition for $E$ alone. He went on to prove that the 
moduli space of solutions $\calM$ enjoys many nice properties. In particular, when $\mathrm{rk}(E) = 2$
and the degree of $E$ is odd, then $\calM$ is a smooth manifold of dimension $12 \gamma - 12$, where $\gamma$ is the genus of $X$. 
(In other cases it is at least a quasi-projective variety, but we shall focus on this simplest setting.) 
Furthermore, it has a natural hyperk\"ahler metric $g$ of Weil-Petersson type, with respect to which it is complete.
 In the intervening years, much has been learned about
its topology and many other features. However, surprisingly little is known about the metric structure at infinity.

In the past few years, however, a very intriguing conceptual picture has emerged through the work of Gaiotto, 
Moore and Neitzke~\cite{gmn10}. As part of a much broader picture concerning hyperk\"ahler metrics 
on holomorphic integrable systems, they describe a decomposition of the natural metric $g$ on $\calM$ 
as a leading term (the semiflat metric in the language of~\cite{fr99}) plus an asymptotic series of non-perturbative
corrections, which decay exponentially in the distance from some fixed point. The coefficients of these 
correction terms are given there in terms of a priori divergent expressions coming from a wall-crossing formalism.  

A further motivation is Hausel's result about the vanishing of the image of compactly supported cohomology in the 
ordinary cohomology~\cite{ha99}. In analogy with Sen's conjecture about the $L^2$-cohomology of the monopole moduli spaces~\cite{se94}, 
he conjectured further that the $L^2$-cohomology of the Higgs bundle moduli space must vanish. Partial confirmation of 
this conjecture were established shortly afterwards by Hitchin~\cite{hi00} who showed that the $L^2$-cohomology is
concentrated in the middle degree. Closely related results about $L^2$-cohomology of gravitational instantons, and partial 
confirmation of Sen's conjecture, were obtained by Hausel, Hunsicker and the first author \cite{HHM}. These papers suggest
that results of this type about $L^2$-cohomology rely on a better understanding of the metric asymptotics on $\calM$. 
 
One other recent development is contained in the recent pair of papers by Taubes \cite{ta1, ta2}. His
setting is for a closely related gauge theory on three-manifolds with gauge group $\SL(2,\C)$, but 
he notes there that his results transfer simply (and presumably with fewer technicalities) to the 
case of surfaces. He proves a compactness theorem
for those equations focusing on the specific problems caused by the noncompactness of the underlying 
group ($\SL(2,\CC)$ rather than $\SU(2)$). More specifically, he is able to deduce information about 
limiting behavior of solutions which diverge in a specific way in the moduli space.  While the results in
our paper are partly subsumed by those of Taubes, we hope that the constructive perspective adopted
here will be of value in the various types of questions described above. 

We can now describe our work and the results in this paper. Our initial motivation was to reach
a more detailed understanding of the structure of the space $\calM$ near its asymptotic boundary,
with the hope of using this to obtain information about the structure of the metric $g$ there. 
We do this by, in essence, reproving Hitchin's result for solutions which lie sufficiently far
out in $\calM$. We make here the simplifying assumption that the Higgs field $\Phi$ is {\em simple}, in the
sense that its determinant has simple zeroes. This implies, in particular, the stability (and thus the 
simplicity) of the pair $(E,\Phi)$ in the sense of Hitchin, but has also further technical implications. We first consider a family of `limiting 
configurations', consisting of certain singular pairs $(A_\infty, \Phi_\infty)$ which satisfy a decoupled 
version of Hitchin's equations, namely
\[
F^\perp_{A_\infty}=0,\quad[\Phi_\infty\wedge\Phi^*_\infty]=0,\quad\delb_{A_\infty}\Phi_\infty=0.
\]
Thus each $A_\infty$ is projectively flat with simple poles, while the limiting Higgs fields are 
holomorphic with respect to these connections and have a specified behavior near these poles. 
\begin{theorem}[Existence and deformation theory of limiting configurations]
Let $(A_0,\Phi_0)$ be any pair such that $q:= \det \Phi_0$ has only simple zeroes. Then there exists a complex gauge transformation $g_\infty$ on $X^\times=X\setminus q^{-1}(0)$ which transforms $(A_0,\Phi_0)$ into 
a limiting configuration. Furthermore, the space of limiting configurations with fixed determinant $q \in H^0(K_X^2)$ having simple zeroes is a torus of dimension $6\gamma-6$, where $\gamma$ is the genus of $X$.
\end{theorem}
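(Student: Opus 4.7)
The plan is to use the spectral correspondence to convert the limiting equations into a line-bundle problem on the spectral curve. Let $p_1,\dots,p_n$ ($n=4\gamma-4$) be the simple zeros of $q=\det\Phi_0\in H^0(K_X^2)$, and set $X^\times=X\setminus\{p_i\}$. The spectral curve $\Sigma=\{\lambda^2=\pi^*q\}\subset\mathrm{Tot}(K_X)$ is smooth; the projection $\pi\colon\Sigma\to X$ is a double cover branched precisely over the $p_i$, with sheet-swapping involution $\sigma$, and by Riemann--Hurwitz $g(\Sigma)=4\gamma-3$. On $X^\times$, the holomorphic Higgs field $\Phi_0$ has distinct eigenvalues $\pm\lambda$, giving a holomorphic eigenline decomposition $E|_{X^\times}=L_+\oplus L_-$ with $L_-\cong L_+^{\vee}$ in the trace-free setting; pulled back to $\Sigma^\times:=\pi^{-1}(X^\times)$, these merge into a single holomorphic line bundle $L$, whose compactification to $\Sigma$ lies in a Prym torsor reflecting trace-freeness.

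Next I would observe that producing the gauge transformation $g_\infty$ amounts to finding a Hermitian metric $h_\infty$ on $E|_{X^\times}$ with (i) the splitting $L_+\oplus L_-$ orthogonal and (ii) the corresponding Chern connection projectively flat. Condition (i) diagonalizes $\Phi_\infty$ and forces $[\Phi_\infty\wedge\Phi_\infty^*]=0$; the equation $\bar\partial_{A_\infty}\Phi_\infty=0$ is automatic since complex gauge transformations preserve the holomorphic structure and $\Phi$; and (ii) is exactly $F_{A_\infty}^\perp=0$. Pulled back to $\Sigma^\times$, (i)--(ii) reduce to finding a Hermitian metric on the line bundle $L$ with flat Chern connection. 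Writing $h=e^{2\varphi}h_0$ relative to a smooth background metric $h_0$, this becomes the scalar Poisson equation
\[
\Delta_\Sigma\varphi=-i\Lambda F_{L,h_0}\qquad\text{on }\Sigma^\times.
\]
On compact $\Sigma$ this equation is obstructed by $\deg L$; on the open surface $\Sigma^\times$ that obstruction disappears, since one may insert logarithmic singularities of $\varphi$ at $\pi^{-1}\{p_i\}$ to absorb the degree. A solution is then produced by a Green's-function ansatz with suitable residues followed by solving a Laplace equation with a smooth right-hand side, which is always solvable on a non-compact Riemann surface; taking the positive-definite fiberwise square root relative to the initial metric yields $g_\infty$.

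For the deformation-theoretic part, two limiting configurations sharing the same $q$ differ by a unitary gauge transformation preserving both the eigenline splitting and the flat structure on each $L_\pm$; this forces the transformation to be a locally constant unitary scalar on each summand, hence globally a $U(1)$-scalar (modulo the center acting trivially on $E$). Therefore the moduli of limiting configurations with $\det\Phi=q$ is the space of flat $U(1)$-structures on spectral line bundles $L$ satisfying the Prym condition $\sigma^*L\cong L^{\vee}$ up to a global scalar, i.e.\ the compact real torus underlying the Prym variety $\mathrm{Prym}(\Sigma/X)$, of complex dimension $g(\Sigma)-g(X)=3\gamma-3$ and real dimension $6\gamma-6$. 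The main obstacle I would expect is analytic rather than algebro-geometric: one must show that the singular solutions $\varphi$ of the Poisson equation produce a $g_\infty$ whose behavior near each $p_i$ matches the precise local model required as starting data for the subsequent desingularization argument.
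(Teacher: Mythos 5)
Your spectral-curve route is a legitimate alternative to the paper's argument (the paper itself notes that its answer matches Hitchin's Prym variety), but as written it has a genuine gap exactly where this theorem carries its weight: the behavior of the solution at the punctures. In this paper a ``limiting configuration'' is not just any solution of the decoupled equations on $X^\times$; by definition it must coincide with the explicit fiducial model $(A^\fid_\infty,\Phi^\fid_\infty)$ near each zero of $q$, in some unitary frame, since this is what makes the later gluing with the desingularizing fiducial solutions possible. Your reduction to the scalar Poisson equation on $\Sigma^\times$ does not control this: the logarithmic weights at $\pi^{-1}(p_i)$ are not free parameters ``inserted to absorb the degree'' but are forced (the harmonic metric must behave like $|z|^{\pm 1/4}$ on the eigenlines, i.e.\ $g_\infty=\mathrm{diag}(|z|^{-1/4},|z|^{1/4})$ in the model), and ``a Laplace equation with smooth right-hand side is always solvable on a non-compact Riemann surface'' gives neither uniqueness nor any asymptotic expansion at the punctures. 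To close this you need precisely the analysis you defer: either work on the compact $\Sigma$ after subtracting the prescribed log singularities and check the resulting integrability/degree constraint, or (as the paper does) solve the Poisson equation in weighted $b$-Sobolev spaces for a gauge transformation valued in the stabilizer line bundle $iL_\Phi$, use the indicial-root computation to get a polyhomogeneous solution with leading term $r^{1/2}$, and then apply the unitary normal-form result (\cref{nor.form.fid}) to put the flat connection into fiducial form near each $p_i$. Without that step the first half of the statement, as the paper means it, is not proved.

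On the deformation half your count is correct and consistent with the paper: the odd part of $H^1(\Sigma;\R)$ has dimension $2g(\Sigma)-2\gamma=6\gamma-6$, matching the paper's computation of $H^1(X^\times;L_{\Phi_\infty})$ modulo the lattice of integral periods. But your argument is only a stabilizer computation (``the transformation is a locally constant scalar on each summand''); it does not by itself parametrize the moduli space. You still need to show that limiting configurations with fixed $q$ are classified by flat unitary structures on the eigenline over $\Sigma^\times$ that are odd under the involution and have the correct (forced) holonomy at the punctures, and that unitary gauge equivalence corresponds exactly to the lattice quotient. The paper's route avoids the spectral curve entirely: it normalizes $\Phi$ on $X^\times$ by a topological extension argument, gauges away $F^\perp_A$ within the stabilizer using conic operator theory, and reads off the moduli space as $H^1(X^\times;L_{\Phi_\infty})$ mod periods; your route buys a cleaner conceptual identification with the Prym variety, but at the price of having to redo the same puncture analysis on $\Sigma$ rather than on $X$.
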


We also consider the family of desingularizing `fiducial solutions' which will be used to `round off' the singularities in these 
limiting configurations. These fiducial solutions are an explicit family of radial solutions on $\CC$, the existence of which 
was pointed out to us by Neitzke, but since there does not seem to be an easily available reference for them in the literature, 
we provide a fairly complete derivation of their properties here. 

With these two types of components, we now pursue a standard strategy to construct exact solutions. Namely, we construct 
families of approximate solutions, which lie in the gauge orbit of some $(A, t\Phi)$ for $t$ large, and then use the linearization 
of a relevant elliptic operator to correct these approximate solutions to exact solutions. This yields the
\begin{theorem}[Desingularization theorem]
If $(A_\infty,\Phi_\infty)$ is a limiting configuration, then there exists a family $(A_t,\Phi_t)$ of solutions of the rescaled Hitchin equation
\[
F_A^\perp+t^2[\Phi\wedge\Phi^*]=0,\quad\delb_A \Phi=0
\]
provided $t$ is sufficiently large, where
\[
(A_t,\Phi_t) \longrightarrow (A_\infty,\Phi_\infty)
\]
as $t \nearrow \infty$, locally uniformly on $X^\times$ along  with all derivatives, at an exponential rate in $t$. Furthermore, $(A_t,\Phi_t)$ 
is complex gauge equivalent to $(A_0,\Phi_0)$ if $(A_\infty,\Phi_\infty)$ is the limiting configuration associated with $(A_0,\Phi_0)$.
\end{theorem}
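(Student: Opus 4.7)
The plan is to apply a quantitative implicit function theorem to a family of approximate solutions built by gluing $(A_\infty,\Phi_\infty)$ on the complement of small disks around the zeros $p_1,\dots,p_{4\gamma-4}$ of $q$ to (suitably pulled back and rescaled) fiducial solutions in those disks. Around each $p_i$ choose a holomorphic coordinate $z$ in which $q=z\,dz^{\otimes 2}$. On a disk of radius of order $t^{-\alpha}$ -- the natural scale dictated by the fiducial solution on $\C$ -- insert the rescaled fiducial solution; outside a slightly larger concentric disk, retain $(A_\infty,\Phi_\infty)$; patch the two on the annulus via a smooth cutoff. The crucial input from the preceding section is that the fiducial solution converges exponentially in the rescaled distance to the model singular configuration, while $(A_\infty,\Phi_\infty)$ itself approaches that model algebraically as $z\to 0$. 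Consequently, the Hitchin error
\[
\mathcal{E}_t := \bigl(F^\perp_{A_t^{\mathrm{app}}} + t^2[\Phi_t^{\mathrm{app}}\wedge(\Phi_t^{\mathrm{app}})^*],\ \delb_{A_t^{\mathrm{app}}}\Phi_t^{\mathrm{app}}\bigr),
\]
supported only on the annular gluing regions, is exponentially small in $t$ in any sensible weighted Sobolev norm.

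Adding a Coulomb-type gauge-fixing term transforms the correction problem into an elliptic equation $L_t u + \mathcal{Q}(u) = -\mathcal{E}_t$, where $L_t$ is a second-order operator whose zero-order part contains the mass term $t^2|\Phi_\infty|^2$ and $\mathcal{Q}$ is algebraic and quadratic. The main analytic obstacle will be to produce a right inverse $G_t$ of $L_t$ with norm growing at most polynomially in $t$, on weighted Sobolev spaces adapted simultaneously to the bulk and to the local fiducial models. Away from the disks the mass term provides a uniform spectral gap of order $t$, controlling $L_t$ there. Inside each disk, rescaling to the fiducial length scale converts $L_t$ into a fixed $t$-independent elliptic operator associated to the fiducial solution on $\C$; its finite-dimensional kernel should correspond exactly to the deformations that move $(A_\infty,\Phi_\infty)$ within the $(6\gamma-6)$-parameter family of Theorem 1, and its cokernel is dual to this kernel. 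The uniform parametrix is then assembled by standard patching of the two local resolvents, with the finite-dimensional obstructions projected out and absorbed into the moduli of limiting configurations. This last bookkeeping -- matching local obstructions against global moduli directions uniformly in $t$ -- is the technically most delicate point.

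Given such an inverse $G_t$, the map $u \mapsto G_t(-\mathcal{E}_t - \mathcal{Q}(u))$ is a contraction on a small ball in the relevant function space, since $\|\mathcal{E}_t\|$ is exponentially small while $\mathcal{Q}$ is quadratic; its unique fixed point yields $(A_t,\Phi_t)$, solving the rescaled Hitchin equation with the asserted convergence to $(A_\infty,\Phi_\infty)$ on $X^\times$. For the complex gauge equivalence: by Theorem 1 the singular transformation $g_\infty$ relates $(A_0,\Phi_0)$ and $(A_\infty,\Phi_\infty)$ on $X^\times$, so each approximate solution $(A_t^{\mathrm{app}},\Phi_t^{\mathrm{app}})$ lies in the complex gauge orbit of $(A_0,\Phi_0)$ on $X^\times$, and the explicit form of the fiducial model shows this extends smoothly across each $p_i$. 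The implicit-function correction preserves the complex gauge orbit because, on the compact surface $X$, Hitchin's original existence theorem selects a unique solution of the rescaled equation in each stable complex gauge orbit; our constructed $(A_t,\Phi_t)$ must therefore coincide with this unique Hitchin solution for the pair $(A_0,\Phi_0)$.
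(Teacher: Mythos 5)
The central gap is your linear theory. You linearize the full Hitchin operator with a Coulomb-type gauge fixing and are then forced to confront a finite-dimensional kernel/cokernel, which you propose to identify with the $(6\gamma-6)$-dimensional family of limiting configurations and to ``project out''; you flag this as the delicate point but never resolve it, and the proposed identification is doubtful. The moduli of limiting configurations are global objects, arising from $H^1(X^\times;L_{\Phi_\infty})$ (a topological count on the punctured surface), not from a kernel of the rescaled fiducial operator localized on the disks around the zeroes, so there is no reason the local obstructions should match the global moduli directions, and no mechanism is given for doing so uniformly in $t$. The paper avoids this issue entirely: since $\delb_A\Phi=0$ is preserved by complex gauge transformations, it seeks the exact solution in the form $(A_t^\appr,\Phi_t^\appr)^{\exp(\gamma)}$ with $\gamma\in\Omega^0(i\mf{su}(E))$, so the only operator to invert is $L_t=\Delta_{A_t^\appr}+t^2M_{\Phi_t^\appr}$ acting on Hermitian sections. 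This operator is nonnegative, its kernel would consist of parallel sections of the twisted line bundle $iL_{\Phi_\infty}$ (hence is trivial), and a domain-decomposition eigenvalue argument plus the Fourier analysis of the fiducial model give a uniform spectral gap and $\|L_t^{-1}\|_{\mathcal{L}(L^2,H^2)}\leq Ct^2$, with no obstruction bookkeeping at all. Without either this restriction or an actual proof of your obstruction-matching scheme, the contraction argument does not get off the ground.

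Two further problems. First, your argument for the complex gauge equivalence is a non sequitur in your own setup: Hitchin's uniqueness theorem says there is a unique solution \emph{within} the orbit of $(A_0,\Phi_0)$, but your implicit-function correction is a general perturbation of the pair (not a priori a complex gauge transformation), so the corrected solution could lie in the orbit of a nearby, non-isomorphic Higgs bundle, and uniqueness cannot tell you which orbit you are in. In the paper this step is automatic, because the correction is $\exp(\gamma_t)$ with $\gamma_t$ Hermitian, hence itself a complex gauge transformation, and the smooth extension of the total gauge transformation across each $p\in\mf p$ is verified by an explicit indicial-root computation (\cref{prop:continuitysinggauge}). Second, your error estimate is internally inconsistent: by \cref{lconfdef} a limiting configuration agrees \emph{exactly} with the fiducial model near each zero, so there is no ``algebraic'' discrepancy to worry about and no need to shrink the gluing region; if you nevertheless glue at radius $t^{-\alpha}$ with $\alpha>0$, the fiducial-to-limit discrepancy there is of size $\exp(-ct^{1-3\alpha/2})$, which is not exponential in $t$ (and if the limiting configuration really only approached the model algebraically, the error would be merely polynomially small, contradicting your claim). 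Gluing on a fixed annulus, as the paper does, is what yields the genuinely exponential error $Ce^{-\delta t}$ and hence the exponential rate of convergence asserted in the theorem.
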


In particular, we obtain Hitchin's existence theorem for pairs $(A,t\Phi)$ when $t$ is large and $\det \Phi$ has simple zeroes. The advantage of this method is that we obtain 
precise estimates on the shape of these solutions. This mirrors precisely what is obtained in \cite{ta1, ta2}, and it is not hard to deduce from this 
that the Weil-Petersson metric $g$ does indeed decompose as a principal term (essentially given by the deformation theory of the limiting configurations) 
and an exponentially decreasing error.  While we are able to capture the correct exponential rate, our method at present includes an extra polynomial factor,
so in particular we are not yet able to say anything about the leading coefficient of the first decaying term. 

To understand the entire end of the moduli space $\calM$ (when the degree of $E$ is odd), one must also consider 
non-simple Higgs fields. When the simplicity condition fails, the desingularizing fiducial solutions must be replaced 
by some more complicated special solutions. These new fiducial solutions are being studied in the ongoing thesis work 
of Laura Fredrickson, and it is expected that these gluing methods will adapt readily to incorporate her `multi-pole' fiducial solutions. 

\bigskip

\centerline{\textbf{Acknowledgements}}

\medskip
This project was initiated and conducted in the research SQuaRE ``Nonlinear analysis and special geometric structures'' at the American Institute of 
Mathematics in Palo Alto. It is a pleasure to thank AIM for their hospitality and support. JS gratefully acknowledges the kind hospitality of the
Department of Mathematics at Stanford University during a several months research visit in 2013. The authors would like to thank Olivier
Biquard, Sergey Cherkis, Laura Fredrickson, Nigel Hitchin, Andrew Neitzke and Richard Wentworth for useful discussions. Sergey Cherkis also read
the manuscript carefully and offered a number of helpful suggestions for improving it. 
%
%
%
\section{Preliminaries on gauge theory and Higgs bundles}
%
\subsection{Holomorphic vector bundles}
To fix notation, we briefly recall some classical facts about gauge theory and holomorphic vector bundles. 
Good general references are~\cite[Chapter I and VII]{ko87} or~\cite[Chapter III and Appendix]{wgp08}.

\bigskip

Let $X$ be a Riemann surface of genus $\gamma\geq2$ with canonical line bundle $K_X$, which we usually denote just
as $K$. We also fix a metric on $X$ in the designated conformal class. Consider a complex vector bundle $E\to X$ of 
rank $r=\rk(E)$ and degree $d=\deg(E)$, where by definition, $d$ is the degree of the complex line bundle 
$\det E=\Lambda^r E$. The {\em slope} or {\em normalized degree} of $E$ is
$$
\mu=\mu(E):=\deg(E)/\rk(E).
$$
Up to smooth isomorphism, the pair $(r,d)$ determines $E$ completely \cite[Chapter I.3]{po97}. We write 
$\GL(E)$ and $\SL(E)$ for the bundles of automorphisms, and automorphisms with determinant one, of $E$,
and set $\mf{gl}(E) = E\otimes E^*$, with $\mf{sl}(E)$ the subbundle of trace-free endomorphisms. The sections
$\Gamma(\GL(E))$ and $\Gamma(\SL(E))$ are the {\em complex gauge transformations} in this theory; these are 
infinite-dimensional Lie groups in the sense of~\cite{mi84}, with Lie algebras $\Omega^0(\mf{gl}(E))$
and $\Omega^0(\mf{sl}(E))$, respectively. A Hermitian metric $H$ on the fibres of $E$ determines the bundles $\U(E,H)$ 
and $\SU(E,H)$ of unitary and special unitary automorphisms of $(E,H)$; the corresponding Lie algebra bundles are 
$\mf u(E,H)$ and $\mf{su}(E,H)$. The sections $\Gamma(\U(E,H))$ are the {\em unitary} gauge transformations. 
For simplicity we usually omit mention of the metric $H$ in this notation. 

The affine space $\mc U(E)$ of unitary connections (with respect to $H$) has $\Omega^1(\mf u(E))$ as its group of translations. 
The action of the unitary gauge group $\U(E)$ on $\mc U(E)$ is the familiar one: 
\begin{equation}\label{gau.cov.der}
d_A\mapsto d_{A^g}:=g^{-1}\circ d_A\circ g=d_A+g^{-1}d_Ag.
\end{equation}
In the sequel, we tacitly fix a base connection $A_0$ and hence identify an arbitrary unitary connection $A$ with an element 
in $\Omega^1(\mf{u}(E))$. The covariant derivative $d_A:\Omega^0(E)\to\Omega^1(E)$ 
satisfies $d H(s_1,s_2)=H(d_As_1,s_2)+ H(s_1,d_As_2)$; in a local trivialization, $d_As=ds+As$, where $d$ is the usual differential 
and the {\em connection matrix} $A$ 
is a matrix-valued $1$-form. Under a local change of frame or {\em gauge} $g:U\to\GL(r)$, the connection matrix transforms as
$$
A\mapsto A^g:=g^{-1}Ag+g^{-1}dg
$$
which is consistent with~\cref{gau.cov.der}.
In a {\em Hermitian frame} $(s_1,\ldots,s_r)$, $A$ is $\mf u(r)$-valued.  These three perspectives, 
regarding $A$ as a point in $\mc U(E)$, a covariant derivative $d_A$ or as a connection matrix, are used 
interchangeably below.  In particular, $A=0$ can mean that $A$ is the base connection, that $d_A$ is given locally as $d$, 
or that the connection matrix vanishes. 

From the natural extension $d_A:\Omega^p(E)\to\Omega^{p+1}(E)$ we obtain the {\em curvature} of $A$, $F_A=d_A\circ d_A\in\Omega^2(\mf u(E))$,
which satisfies the familiar transformation rule
\[
F_{A^g}=g^{-1}F_A g.
\]
A unitary connection induces a unitary connection on any bundle derived from $(E,H)$, and in particular, this 
connection on $\det E$ is written $\det A$. By Chern-Weil theory, the degree of $E$ equals 
\[
d=\frac{i}{2\pi}\int_X\Tr F_A=\frac{i}{2\pi}\int_XF_{\det A}.
\]

We now explain the action of the complex gauge group on connections. 
An atlas of holomorphic trivializations of $E$ defines a {\em holomorphic structure} on $E$, and the Cauchy-Riemann operator 
$\delb$ acting on $\C^r$-valued functions in any local holomorphic chart yields a global {\em pseudo-connection} 
$\delb_E:\Omega^0(E)\to\Omega^{0,1}(E)$, where $\delb_E\circ\delb_E=0$. Conversely, any such pseudo-connection defines 
a holomorphic structure. Since $\delb_E^2 = 0$ holds trivially on a Riemann surface, any choice of pseudo-connection (which 
always exists) defines a holomorphic structure on $E$. The space of pseudo-connections $\mc C(E)$ is once again affine, and 
modelled on $\Omega^{0,1}(\mf{gl}(E))$. In a local holomorphic trivialization, $\delb_Es=\delb s+\alpha s$, so that 
$\delb_E=\delb$. We also write $\delb_\alpha$ for $\delb_E$ when we wish to emphasize the connection matrix. When there 
is no risk of confusion, we simply write $\delb$ for $\delb_E$ or $\delb_\alpha$. The complex gauge group 
$\Gamma(\GL(E))$ acts on $\mc C(E)$ by 
\[
\mc C(E)\to\mc C(E),\quad\delb_\alpha\mapsto\delb_{\alpha^g}:=g^{-1}\circ\delb_\alpha\circ g=\delb_\alpha+g^{-1}\delb_\alpha g.
\]
As before, the transformation rule for the connection matrix $\alpha$ under local gauge transformations is
\[
\alpha\mapsto\alpha^g=g^{-1}\alpha g+g^{-1}\delb g.
\]

If $A$ is a unitary connection (for some fixed Hermitian metric $H$), then the projection of $d_A$ onto $(0,1)$ forms, 
$$
\delb_A:=\mr{pr}^{0,1}\circ d_A, 
$$
is a pseudo-connection and hence determines a holomorphic structure; we also define $\partial_A=\mr{pr}^{1,0}\circ d_A$. 
Conversely, given the Hermitian metric $H$, then to any pseudo-connection $\delb_\alpha$ we can uniquely associate a
unitary connection $A=A(H,\delb_\alpha)$; this is the so-called {\em Chern connection}, which has $\delb_A=\delb_\alpha$. This correspondence is given by
\[
\mc C(E)\to\mc U(E,H),\quad\alpha\mapsto A(H,\delb_\alpha)
\]
where $\del_A=\del_{ A(H,\delb_\alpha)}$ is determined by the identity $\delb (H(s_1,s_2))=H(\delb_\alpha s_1,s_2)+H(s_1,\del_A s_2)$. In terms of 
local connections matrices, 
\[
A(H,\delb_\alpha)=\alpha-\alpha^*.
\]
The natural action of $\Gamma(\U(E,H))$ on $\mc U(E,H)$ thus extends to an action by elements of $\Gamma(\GL(E))$ by 
\[
A(H,\delb_\alpha)^g:=A(H,\delb_{\alpha^g})
\]
or equivalently, 
\begin{equation}
d_{A^g}=\delb_{A^g}+\del_{A^g}:=g^{-1}\circ\delb_A\circ g+g^*\circ\del_A\circ g^{*-1}.
\label{cplxgaugeaction}
\end{equation}
Note that this reduces to the action of~\eqref{gau.cov.der} when $g\in\Gamma(\U(E,H))$. The curvature transforms as
\be\label{cur.com.gt}
F_{A^g}=g^{-1}(F_A+\delb_A(G\cdot\partial_AG^{-1}))g
\ee
where $G=gg^*$.
%
\subsection{Hitchin's equations}
Fix a Hermitian vector bundle $(E,H)\to X$ of rank $r$ and degree $d$.  We shall be studying solutions $(A,\Phi)$ of Hitchin's 
self-duality equations~\cite{hi87} 
\be
\label{hit.equ.uni}
\begin{array}{rcl}
F_A+[\Phi\wedge\Phi^*] & = & -i\mu(E)\Id_E\omega, \\[0.4ex]
\delb_A\Phi & =& 0.
\end{array}
\ee
Here $A\in\mc U(E)$ and $\Phi \in \Omega^{1,0}(\mf{gl}(E))$ is called a {\em Higgs field}.

The unitary gauge group $\Gamma(\U(E))$ acts on Higgs fields by conjugation $\Phi^g:=g^{-1}\Phi g$ and it is not hard to see that it
therefore acts on the solution space of~\eqref{hit.equ.uni}.  Moreover, any solution $(A,\Phi)$ determines a {\em Higgs bundle} $(\delb,\Phi)$,
i.e.\ a holomorphic structure $\delb=\delb_A$ for which $\Phi$ is holomorphic: $\Phi\in H^0(X,\End(E)\otimes K)$. To do so we simply 
forget the first equation in \eqref{hit.equ.uni}.  Conversely, given a Higgs bundle $(\delb,\Phi)$, we ask whether $\delb$ can be extended 
to a unitary connection $A$ such that the first Hitchin equation holds. We say that a Higgs bundle $(\delb,\Phi)$ is {\em stable} if and 
only if $\mu(F)<\mu(E)$ for any nontrivial proper $\Phi$-invariant holomorphic subbundle $F$. (This $\Phi$-invariance means that $\Phi(F)\subset F\otimes K$.) 

\begin{example*}
The {\em determinant} of a Higgs field $\Phi$ is the holomorphic quadratic differential $\det\Phi\in H^0(X,K^2)$. Since any holomorphic section of $K^2$ has precisely 
$4(\gamma-1)$ zeroes (counted with multiplicity) and we are assuming that $\gamma > 1$, the set $\mf p_\Phi$ of zeroes of $\det\Phi$ is nonempty, and we write 
$X^\times_\Phi=X\setminus\mf p_\Phi \subsetneq X$ for its complement. (When there is no risk of confusion, we simply write $\mf p$ and $X^\times$.) 
A Higgs field is called {\em simple} if the zeroes of $\det\Phi$ are simple; in this case, 
$\mf p_\Phi$ has precisely $4(\gamma-1)$ zeroes, and by a standard local computation, if $p\in\mf p_\Phi$, 
then there exists a holomorphic coordinate chart centered at $p$ such that $\det \Phi=-z\,dz^2$. We always 
work with such a coordinate system near each $p$ and write $\Phi=\varphi \, dz$ so that $\det\Phi=\det\varphi\,dz^2$. For instance, the so-called
 fiducial Higgs field $\Phi_t^\fid$, $t < \infty$ which will be constructed in \cref{inn.fid.sol} is simple in this sense.

When the rank of $E$ is $2$ and $\Phi$ is a simple Higgs field, then necessarily the Higgs pair $(E,\Phi)$ is stable. 
Indeed, if there were to exist a holomorphic line bundle $L\subset E$ which is preserved by $\Phi$, then there are local 
holomorphic coordinates and frames such that
\[
\Phi=\varphi(z)dz=\begin{pmatrix}a(z)&b(z)\\0&c(z)\end{pmatrix}\, dz,
\]
where $a(z)$, $b(z)$ and $c(z)$ are holomorphic functions. Thus $\det\Phi(z)=a(z) c(z)$. Hence if this determinant vanishes simply at $z=0$, 
then either $a(0)=0$ or $c(0)=0$, but not both. On the other hand, $a(z)$ and $c(z)$ are the eigenvalues of the coefficient matrix $\varphi(z)$,
and by assumption, $\mathrm{tr}\, \varphi = 0$, i.e., $a(z) + c(z) = 0$, so if one of these terms vanishes then so must the other. We are grateful 
to Richard Wentworth for pointing out this simple but important fact.
\end{example*}

More generally,  $(E,\delb,\Phi)$ is called {\em polystable} if $(E,\delb,\Phi)=\oplus(E_j,\delb_j,\Phi_j)$ is a direct holomorphic sum of stable pairs 
$(E,\delb_j,\Phi_j)$ such that $\mu(E)=\mu(E_j)$ for all $j$. (Poly-)stability is clearly preserved by the action of the complex gauge group.

\medskip

{\bf Theorem A} (Hitchin, Simpson). {\em In the $\Gamma(\GL(E))$-orbit of the Higgs bundle $(\delb,\Phi)$, there exists a pseudo-connection 
which can be extended to a unitary connection $A$ solving~\eqref{hit.equ.uni} if and only if $(\delb,\Phi)$ is polystable. The connection 
$A$ is unique up to unitary gauge transformation.}

\medskip

This is due to Hitchin~\cite{hi87} in the rank $2$ case, and to Simpson~\cite{si88,si92} for higher rank. 

\begin{remark}
Theorem A is an existence theorem for a complex gauge transformation: if $A=A(H,\delb)$ is the Chern connection associated with the polystable 
Higgs bundle $(\delb,\Phi)$, then there exists (up to $\Gamma(\U(E))$) a unique $g\in\Gamma(\GL(E))$  such that $(A,\Phi)^g:=(A^g,\Phi^g)$ 
is a solution to~\eqref{hit.equ.uni}. 
\end{remark}

This result means that the moduli space
$$
\mc M_\GL(r,d)=\{(\delb,\Phi)\mid(\delb,\Phi)\mbox{ polystable}\}/\Gamma(\GL(E))
$$ 
of polystable bundles is identified with the space of solutions of \eqref{hit.equ.uni} modulo unitary gauge transformations:
$$
\mc M_\GL(r,d)\cong\{(A,\Phi)\mid\mbox{ solution of \eqref{hit.equ.uni}}\}/\Gamma(\U(E)).
$$

\medskip

{\bf Theorem B} (Hitchin, Nitsure, Simpson).
{\em The moduli space $\mc M_\GL(r,d)$ is a quasi-projective variety of (complex) dimension $2+r^2(2\gamma-2)$. It contains $\mc M^s_\GL$, 
the moduli space of stable Higgs bundles, as a smooth Zariski open set.}

\medskip

This again due to Hitchin~\cite{hi87} and Simpson~\cite{si88} in the rank $2$ and higher rank cases, respectively, and also
to Nitsure~\cite{ni91}, who proved it using GIT methods. 

\begin{remark}
If $\mr{gcd}(r,d)=1$, a polystable bundle is necessarily stable so that $\mc M_\GL(r,d)$ is a smooth, quasiprojective variety.
\end{remark}

Since $\Omega^1(\mf u(E))\cong\Omega^{0,1}(\mf{gl}(E)) :=\mc A$, the solution space of \eqref{hit.equ.uni} is a subspace of 
$\mc A\times\bar{\mc A}$. There is a natural $L^2$-Hermitian inner product
\be\label{l2.tra.pro}
\langle(\dot A,\dot\Phi),(\dot B,\dot\Psi)\rangle=2i\int_X\Tr(\dot A^*\wedge\dot B+\dot\Phi\wedge\dot\Psi^*),
\ee
and using this, $\mc A\times\bar{\mc A}$ carries a natural flat hyperk\"ahler metric. An infinite-dimensional variant of the 
hyperk\"ahler quotient construction~\cite{hklr87} yields

\medskip

{\bf Theorem C} (Hitchin).
{\em The space $\mc M^s_\GL(r,d)$ carries a natural hyperk\"ahler metric; this metric is complete 
when $\mr{gcd}(r,d)=1$.}

\medskip

In this paper we fix the determinant line bundle of $E$. According to the splitting $\mf{u}(r)=\mf{su}(r) \oplus \mf{u}(1)$, where 
$\mf{su}(r)$ is the set of trace-free elements of the Lie algbera $\mf{u}(r)$ and $\mf{u}(1) = i \R$, the bundle $\mf{u}(E)$
splits as $\mf{su}(E) \oplus i \underline{\R}$. If $A$ is a unitary connection, then its curvature $F_A$ decomposes as
$$
F_A = F_A^\perp + \frac{1}{r} \Tr(F_A) \otimes \Id_E,
$$
where $F_A^\perp \in \Omega^2(\mf{su}(E))$ is the {\em trace-free} part of the curvature and $\frac{1}{r} \Tr(F_A) \otimes \Id_E$ is 
the {\em pure trace} or {\em central} part, see e.g. \cite{po92}. 
Note that $\Tr(F_A) \in \Omega^2(i\underline{\R})$ is precisely the curvature of the induced connection on $\det E$. Let us fix a 
background connection $A_0$ from now on and consider only those connections $A$ which induce the same connection 
on $\det E$ as $A_0$ does, i.e.\ $A=A_0 + \alpha$ where $\alpha \in \Omega^1(\mf{su}(E))$;  in other words, any such
$A$ is trace-free ``relative'' to $A_0$. We may now consider the pair of equations 
\be\label{hit.equ.fixed.det}
\begin{array}{rcl}
F_A^\perp+[\Phi\wedge\Phi^*] & = & 0, \\[0.4ex]
\delb_A\Phi & =& 0,
\end{array}
\ee
for $A$ trace-free relative to $A_0$. Since the trace of a holomorphic Higgs field is constant, we may as well restrict to
trace-free Higgs fields $\Phi \in \Omega^{1,0}(\mf{sl}(E))$.  There always exists a unitary connection $A_0$ on $E$ 
such that $\Tr F_{A_0} =  -i \deg(E) \omega$, and with this as background connection, a solution of 
\eqref{hit.equ.fixed.det} provides a solution to \eqref{hit.equ.uni}, even though the latter system is a priori more stringent.

Define the moduli space
$$
\mc M^{\mathrm{gauge}}_\SL(r,d) := \{ (A_0+\alpha,\Phi)\mid\mbox{ solution of \eqref{hit.equ.fixed.det}}\}/\Gamma(\SU(E)).
$$
This does not depend in an essential way on the choice of the background connection $A_0$, we will choose $A_0$ 
as convenience dictates.

The choices above correspond to fixing a holomorphic structure $\bar\partial_{\det E}$ on $\det E$. We set
$$
\mc M_\SL(r,d):=\{ (\bar \partial, \Phi) \; \mbox{polystable}\mid\bar\partial \  \mbox{induces} \ \  \bar\partial_{\det E}, 
\Tr \Phi=0\}/ \Gamma(\SL(E)).
$$
The Kobayashi-Hitchin correspondence asserts that
$$
\mc M^{\mathrm{gauge}}_\SL(r,d) \cong \mc M_\SL(r,d). 
$$
The previous theorems carry over directly to the fixed determinant case, so in particular $\mc M_\SL(r,d)$ is a smooth 
quasiprojective variety of complex dimension $(r^2-1)(2\gamma-2)$, with a hyperk\"ahler metric which is complete provided $\mr{gcd}(r,d)=1$. 

\begin{remark}
If we were to consider the space of pairs $(A,\Phi)\in\mc A_0\times\bar{\mc A}_0$ which solve \eqref{hit.equ.fixed.det} modulo the 
gauge group of the principal $\P\U(r)$-bundle, then non-trivial isotropy groups necessarily occur, and hence the resulting moduli space 
is singular, cf.\ Hitchin's example~\cite[p.\ 87]{hi87}. It is therefore advantageous to work in the vector bundle setting.
\end{remark}

\bigskip

\noindent{\bf Conventions:}
For the rest of the paper, unless mentioned otherwise, we restrict attention solely to the fixed determinant case for complex vector 
bundles of rank $r = 2$, and with degree $d$ odd (so $\mr{gcd}(r,d) = 1$). We also write 
$$
\mc M:=\mc M_\SL(r,d),\quad\mc G^c:=\Gamma(\SL(E))\quad\mbox{and }\mc G:=\Gamma(\SU(E));
$$
these are the {\em moduli space of Higgs bundles}, and the {\em complex} and {\em unitary gauge groups}, respectively. These assumptions imply that $\mc M$ is a smooth quasiprojective variety of real dimension $12(\gamma-1)$ with a complete hyperk\"ahler metric.
%
%
%
\section{The fiducial solution}\label{inn.fid.sol}
Our first goal is to determine the model `fiducial' solutions of Hitchin's equations for Higgs fields with simple zeroes. These are the elements of a one-parameter radial family of `radial' global solutions on $\RR^2$, and are a key ingredient in the gluing construction below. The limiting element of this family is a pair $(A_\infty^\fid, \Phi_\infty^\fid)$ 
which is singular at $0$ and satisfies a decoupled version of Hitchin's equations: 
\begin{equation}
F_{A_\infty^\fid} = 0, \quad [ \Phi_\infty^\fid \wedge (\Phi_\infty^\fid)^*] = 0, \quad \mbox{and} \qquad 
\delb_{A^\fid_\infty} \Phi^\fid_\infty = 0.
\label{limhitch}
\end{equation}
The other elements of the family, $(A_t^\fid, t \Phi_t^\fid)$, $0 < t < \infty$, are smooth across $0$, satisfy \eqref{hit.equ.uni} (since
$E$ is trivial on $\CC$, $\mu(E) = 0$) and desingularize the limiting element. Further, they give rise to solutions of the self-dual Yang-Mills equation which are translation invariant in two directions and are also rotationally invariant. Symmetric solutions of this type (as well as others) have been intensively studied in connection with integrable systems, and Mason and Woodhouse show that the resulting reduced equation is essentially a Painlev\'e III \cite{mawo93}, see also \cref{sceqn}. On the other hand, some version of this family appears at least as far back as the paper of Ceccotti and Vafa \cite{ceva93}, but see also the more 
recent paper of Gaiotto, Moore and Neitzke \cite{gmn13}. Its existence can also be deduced from the work of Biquard and 
Boalch \cite{bibo04}, although their method does not give the explicit formula for it.  In any case, we present an 
explicit derivation of this family of solutions since this does not seem to appear in the literature. We are very grateful to 
Andy Neitzke for bringing this family of fiducial solutions to our attention and for explaining its main properties to us. We note that 
similar fiducial solutions in more general settings, e.g.\ Higgs fields with determinants having non-simple zeroes, or for higher 
rank groups, are being constructed in the forthcoming thesis of Laura Fredrickson~\cite{lafr} at UT Austin.

We begin with a useful lemma.
\begin{lemma}\label{norm.det}
Let $\Phi$ and $\Phi'$ be two Higgs fields on $X$ with $\det\Phi=\det\Phi'$ such that both $\Phi$ and $\Phi'$ are normal on $X^\times$.
Then there exists a unitary gauge transformation $g$ on $X^\times$ such that $\Phi^g =\Phi'$. 
\end{lemma}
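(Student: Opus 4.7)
The plan is to reduce the statement to a pointwise linear algebra problem, assemble the pointwise conjugators into a smooth principal $\U(1)$-bundle over $X^\times$, and then invoke the topological triviality of $\U(1)$-bundles over an open Riemann surface.

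First, at each point $p \in X^\times$, both $\Phi(p)$ and $\Phi'(p)$ are trace-free (by the fixed-determinant convention), normal, and share the same nonvanishing determinant. Hence in any local frame they have the same pair of distinct eigenvalues $\pm\lambda(p)$, with $\lambda(p)^2 = -\det\varphi(p) \ne 0$. Being normal, each is unitarily diagonalizable, so $\Phi(p)$ and $\Phi'(p)$ are conjugate by an element of $\SU(E_p)$. The fiber
\[
P_p := \{\, g \in \SU(E_p) \,:\, g^{-1}\,\Phi(p)\,g = \Phi'(p)\,\}
\]
is thus nonempty and is a right coset of the centralizer of $\Phi'(p)$ in $\SU(E_p)$, which is the maximal torus $\U(1)$ (since the eigenvalues are distinct on $X^\times$).

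Next, I would check that $P := \bigsqcup_p P_p$ is a smooth principal $\U(1)$-bundle over $X^\times$. Because $\det\Phi$ is a nonvanishing smooth section of $K^2$ on $X^\times$, the eigenprojectors of $\Phi$ and $\Phi'$ depend smoothly on $p$, so on any sufficiently small open $U \subset X^\times$ one can choose smooth orthonormal eigenframes $(v^+,v^-)$ and $(v'^+, v'^-)$ of $\Phi|_U$ and $\Phi'|_U$, after adjusting a phase so that the resulting unitary frames $U,U' \colon U \to \SU(E)$ actually land in $\SU(2)$. Then the formula $g = U(U')^{-1}$ defines a smooth local section of $P$ that conjugates $\Phi$ to $\Phi'$, and two such sections differ by an $\SU(E)$-element of the torus stabilizer, which identifies $P$ as a smooth $\U(1)$-bundle.

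Finally, since $\mf p$ is nonempty, $X^\times = X\setminus\mf p$ is a noncompact Riemann surface, hence homotopy equivalent to a one-dimensional CW-complex and therefore has $H^2(X^\times,\Z)=0$. Smooth principal $\U(1)$-bundles on $X^\times$ are classified by this group, so $P$ is (smoothly) trivial and admits a global smooth section $g$, which is the required unitary gauge transformation with $\Phi^g = \Phi'$.

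The one step that requires any care is the smooth bundle structure on $P$: it uses both that the eigenvalues of $\Phi$ and $\Phi'$ are distinct on $X^\times$ and that these can be matched consistently in each local trivialization. After that, topological triviality of $\U(1)$-bundles on an open Riemann surface does the rest.
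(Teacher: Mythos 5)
Your overall route is the same as the paper's: pointwise unitary conjugacy (normal, trace-free, same nonvanishing determinant, hence the same pair of distinct eigenvalues), then a smooth bundle of conjugators over $X^\times$, then a global section for topological reasons. The gap is precisely in the step you flag: the bundle $P$ is \emph{not} naturally a principal $\U(1)$-bundle, and in the present situation it cannot be given any principal $\U(1)$-structure at all. Each fibre $P_p$ is a torsor under the stabilizer circle of $\Phi'(p)$ in $\SU(E_p)$, but identifying these circles coherently with one fixed copy of $\U(1)$ requires a consistent ordering of the two eigenlines of $\Phi'$, and that ordering is destroyed by monodromy: around each simple zero of $\det\Phi'$ the eigenvalues $\pm\sqrt{-\det\varphi'}$ and the corresponding eigenlines are interchanged. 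Equivalently, the group bundle of unitary stabilizers is a twisted circle bundle near each puncture --- exactly the phenomenon the paper records after \eqref{gamma.mu} and exploits later (the real line bundle $L_{\Phi_\infty}$ is twisted near each $p_i$, so it has no parallel or even nonvanishing sections there). Concretely, the transition functions of $P$ take values in the full isometry group $\mathrm{O}(2)$ of the circle rather than in $\mathrm{SO}(2)=\U(1)$, and since the fibres of $P$ are non-orientable along small loops encircling the punctures, no reduction of structure group to $\U(1)$ exists; hence the classification of principal $\U(1)$-bundles by $H^2(X^\times;\Z)=0$ cannot be applied.

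The conclusion survives with a weaker, correct topological input, which is what the paper uses: $P\to X^\times$ is a smooth fibre bundle whose fibre is a connected circle, and $X^\times$ is homotopy equivalent to a bouquet of circles, i.e.\ to a one-dimensional CW complex. A section chosen over the $0$-skeleton extends over each $1$-cell using only path-connectedness of the fibre, and there are no higher cells, so a global section $g$ exists; this $g$ is the desired unitary gauge transformation. If you replace your final paragraph by this obstruction-theoretic remark (or by any argument using only connectedness of the fibre rather than a principal structure), your proof is complete and essentially identical to the paper's.
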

\begin{proof}
Since $X^\times$ is homotopy equivalent to a bouquet of circles, any complex vector bundle over $X^\times$ is topologically trivial. 
More generally, any fibre bundle with connected fibre admits a global section over $X^\times$. In particular we may identify $\Phi$ and 
$\Phi'$ with functions $\varphi,\,\varphi':X^\times\to\mf{sl}(2,\C)$. Since $\varphi$ and $\varphi'$ are pointwise normal and have the same 
determinant, then locally on $X^\times$ we can find unitary gauge transformations $g$ such that $g^{-1}\varphi g=\varphi'$. Hence
\[
\mc{C}_{\varphi,\varphi'}=\{(p,g_p)\in X^\times\times\SU(2)\mid g_p^{-1}\varphi(p)g_p=\varphi'(p)\}\to X^\times
\]
is a smooth fibre bundle. The typical fiber is diffeomorphic to the pointwise stabilizer
\[
\operatorname{Stab}_{\, \SU(2)} \begin{pmatrix} \lambda & 0 \\ 0 & - \lambda \end{pmatrix} = 
\left\{ \begin{pmatrix} \tau & 0 \\ 0 & \bar \tau \end{pmatrix}\mid\tau \in S^1 \right\}
\]
which is a maximal torus $S^1 \subset \SU(2)$. Since this is connected, there exists a global section over $X^\times$.
\end{proof}
%
\subsection{The limiting fiducial connection}\label{lim.fid.con}
We first determine the {\em limiting fiducial solution} $(A_\infty^\fid, \Phi_\infty^\fid)$, where $A_\infty^\fid$ is flat and 
$\Phi_\infty^\fid$ is normal. In fact, we show that any pair $(A,\Phi)$ on $\CC$, where $A$ is a flat unitary connection with a simple
pole at $0$ and $\Phi$ is a normal Higgs field vanishing only at $0$ and with a simple zero there, can be modified by
a unitary gauge transformation to this particular model. 

The construction below can be carried out either on all of $\CC$ or else over an open disc $D$ centered at $0$. To be specific, 
we suppose the latter. As usual, $D^\times = D \setminus \{0\}$.  

Let $\Phi$ be normal. If $\Phi$ is a simple Higgs field on $D$, there is a complex coordinate $z$ such that $\det\Phi=-z\,dz^2$ on $D$. 
Fix a Hermitian metric $H$ on $E$ and corresponding unitary frame so that $E|_{D^\times}\cong D^\times\times\C^2$.
Define the {\em limiting fiducial Higgs field} with respect to this frame by 
\be\label{lim.fid.phi}
\Phi^\fid_\infty=\varphi^\fid_\infty \,dz:=\begin{pmatrix}0&\sqrt{|z|}\\\frac{z}{\sqrt{|z|}}&0\end{pmatrix}dz.
\ee
This is continuous on $D$ and smooth on $D^\times$. By \cref{norm.det}, since $\det\Phi^\fid_\infty=\det\Phi$, there is a unitary gauge 
transformation $g$ on $D^\times$, unique up to the unitary stabilizer of $\Phi^\fid_\infty$, which brings $\Phi$ into 
this fiducial form, that is, $g^{-1}\Phi g=\Phi^\fid_\infty$ over $D^\times$. The {\em infinitesimal complex stabilizer} of $\Phi^\fid_\infty$ 
is the bundle
$$
L_{\Phi^\fid_\infty}^\C:=\{\gamma\in\mf{sl}(E): [\gamma,\Phi^\fid_\infty]=0\}.
$$
In this fixed unitary frame, $\gamma\in\Omega^0(D^\times,L_{\Phi^\fid_\infty}^\C)$ if and only if
\be\label{gamma.mu}
\gamma_\mu=\mu\begin{pmatrix}0&1\\\frac{z}{|z|}&0\end{pmatrix},\quad\mu:D^\times\to\C.
\ee
Note that $\gamma_\mu$ is skew-Hermitian if and only if $e^{i\theta}\mu+\bar\mu=0$ (where $z = re^{i\theta}$); this reflects
the fact that this bundle of unitary stabilizers is a nontrivial $S^1$-bundle over $D^\times$ (cf.\ also the end of the proof of \cref{calc.ind.roots}).

\begin{proposition}\label{nor.form.fid}
Let $A$ be a flat unitary connection over $D^\times$ with respect to which $\Phi^\fid_\infty$ is holomorphic. Then there exists a 
unique unitary gauge transformation $g\in\Gamma(D^\times,\SU(E))$ stabilizing $\Phi^\fid_\infty$ and such that
\be\label{lim.fid.a}
A^g=A^\fid_\infty:=\frac{1}{8}\begin{pmatrix}1&0\\0&-1\end{pmatrix}\left(\frac{dz}{z}-\frac{d\bar z}{\bar z}\right).
\ee
Note that this limiting fiducial solution $(A^\fid_\infty,\Phi^\fid_\infty)$ is defined with respect to a fixed unitary {\em fiducial frame}. 
\end{proposition}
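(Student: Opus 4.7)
The strategy is to study the $1$-form $\eta := A - A^\fid_\infty$, exhibit it as a closed section of a twisted real line bundle over $D^\times$, and then gauge it away using an element of the unitary stabilizer of $\Phi^\fid_\infty$.

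First, since both $A$ and $A^\fid_\infty$ satisfy $\delb_\bullet \Phi^\fid_\infty = 0$, subtraction yields $[\eta^{0,1}, \Phi^\fid_\infty] = 0$, so $\eta^{0,1}$ is valued in the complex centralizer $L^\C_{\Phi^\fid_\infty} \subset \mf{sl}(E)$. Normality of $\Phi^\fid_\infty$ makes $L^\C_{\Phi^\fid_\infty}$ closed under the $*$-operation, and the unitarity of both $A$ and $A^\fid_\infty$ then forces $\eta$ itself to be a $1$-form with values in the skew-Hermitian centralizer $L^u := L^\C_{\Phi^\fid_\infty} \cap \mf{su}(E)$, a real line sub-bundle of $\mf{su}(E)$. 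Locally in the fiducial frame, $L^u$ is trivialized by the unit-length generator $\tau_u := i e^{-i\theta/2} \tau$ with $\tau = \begin{pmatrix} 0 & 1 \\ e^{i\theta} & 0 \end{pmatrix}$; the $e^{-i\theta/2}$ factor shows $\tau_u$ picks up a sign going once around the puncture, so $L^u$ is the nontrivial ``sign'' local system on $D^\times$. Write $\eta = f \cdot \tau_u$ for a correspondingly twisted real $1$-form $f$.

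The key technical input is the parallelism identity
\[
d_{A^\fid_\infty} \tau_u = d\tau_u + [A^\fid_\infty, \tau_u] = 0,
\]
which a short computation in the fiducial frame confirms and which is exactly what pins down the constant $\tfrac{1}{8}$ in the definition of $A^\fid_\infty$. Combining this with $F_{A^\fid_\infty} = 0$ (immediate from $dz/z$ and $d\bar z/\bar z$ being closed, $H^2 = I$) and $[\eta \wedge \eta] = 0$ (abelianness of $L^\C_{\Phi^\fid_\infty}$), one expands $F_A = F_{A^\fid_\infty + \eta} = df \cdot \tau_u$, so flatness of $A$ is equivalent to $df = 0$ as a twisted $1$-form.

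To produce the gauge transformation, I parametrize an element of the unitary stabilizer as $g = \cos\phi \cdot I + \sin\phi \cdot \tau_u$ (using $\tau_u^2 = -I$). A direct computation, again invoking $d_{A^\fid_\infty} \tau_u = 0$, shows that all contributions from conjugation and $g^{-1} dg$ other than the $H$-diagonal piece and the $\tau_u$-piece cancel, yielding
\[
A^g = A^\fid_\infty + \eta + d\phi \cdot \tau_u.
\]
So $A^g = A^\fid_\infty$ reduces to the single real scalar equation $d\phi = -f$ in the twisted line bundle $L^u$. Since $L^u$ is the sign local system on $D^\times$, its twisted de Rham cohomology satisfies $H^0(D^\times; L^u) = H^1(D^\times; L^u) = 0$: the former because any parallel section is forced to vanish by the monodromy $-1$, and the latter then by $\chi(D^\times) = 0$. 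The vanishing of $H^1$ provides a section $\phi \in \Gamma(D^\times, L^u)$ solving $d\phi = -f$, and the vanishing of $H^0$ gives its uniqueness. The sign flips of $\phi$ and of $\tau_u$ around the puncture cancel in $\cos\phi \cdot I + \sin\phi \cdot \tau_u$, so the resulting $g$ is a single-valued element of $\Gamma(D^\times, \SU(E))$.

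The main obstacle is the verification of the parallelism $d_{A^\fid_\infty} \tau_u = 0$ and the ensuing cancellations in the computation of $A^g$: this is the computational heart of the argument, and it is precisely this identity that forces the numerical coefficient $\tfrac{1}{8}$ appearing in $A^\fid_\infty$. Once the identity is in hand, the remainder of the proof is a standard application of twisted de Rham theory on the punctured disk.
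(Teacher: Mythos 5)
Your proof is correct, but it takes a genuinely different route from the paper's. The paper works by brute force in polar coordinates in the fiducial frame: holomorphicity of $\Phi^\fid_\infty$ forces the diagonal part of $A$ to be $\tfrac{i}{4}\,\mathrm{diag}(1,-1)\,d\theta$ and imposes the reality constraint \eqref{a.v.w} on the off-diagonal coefficients $v,w$; flatness gives $\partial_r v = iPw$ with $P=-i\partial_\theta+\tfrac12$; the stabilizer parameter $\mu$ is then produced as $\mu=P^{-1}(iv)$, the key point being the invertibility of $P$ (no zero mode, because of the half-integer Fourier shift coming from the twisting of the stabilizer bundle), with skew-hermiticity of $\gamma_\mu$ verified at the end so that the gauge transformation is unitary. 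You instead package the same information invariantly: holomorphicity plus unitarity force $\eta=A-A^\fid_\infty$ to take values in the real stabilizer line bundle $L_{\Phi^\fid_\infty}$, which is $A^\fid_\infty$-flat with parallel (sign-ambiguous) unit section $\tau_u$ and monodromy $-1$; flatness of $A$ becomes closedness of the twisted scalar $1$-form $f$, and the gauge-fixing equation becomes $d\phi=-f$, solvable and uniquely so because $H^0$ and $H^1$ of the sign local system on $D^\times$ vanish. The vanishing of twisted $H^1$ is exactly the same mechanism as the invertibility of $P$ in the paper, so the two arguments agree at their core; yours is more conceptual, isolates where the coefficient $\tfrac18$ comes from (the parallelism of $\tau_u$), and dovetails nicely with the twisted-cohomology computations of \cref{def.lim.config}, whereas the paper's explicit computation additionally yields formulas such as \eqref{traf.equ.a01} and the relation $P\mu = iv$ that are reused later (Step 2 of the proof of \cref{prop:continuitysinggauge}), which your argument does not provide.

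Two small points you should make explicit, neither of which is a real gap. First, subtracting the two holomorphicity equations requires $\delb_{A^\fid_\infty}\Phi^\fid_\infty=0$; this follows in one line from your parallelism identity, since $\varphi^\fid_\infty$ is the holomorphic multiple $-i\sqrt{z}$ of $\tau_u$ (the sign ambiguities of $\sqrt{z}$ and $\tau_u$ cancel). Second, the uniqueness you obtain is uniqueness within the parametrization $g=\cos\phi\,\Id+\sin\phi\,\tau_u$ with $\phi$ a twisted function; sections such as $-\Id$ (and more generally $-g$) lie in the unitary stabilizer, act identically on connections, and are not of this form, so uniqueness holds only up to the center. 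The paper's proof has exactly the same level of precision (uniqueness of $\mu$ within the exponential ansatz), so this is a shared imprecision rather than a defect of your argument.
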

\begin{proof}
Write $A=A_r dr+A_\theta d\theta$ and name the components of these coefficient matrices with respect to the chosen fiducial frame as 
$$
A_r=\begin{pmatrix}i\beta&w\\-\bar w&-i\beta\end{pmatrix},\quad A_\theta=\begin{pmatrix}i\alpha&v\\-\bar v&-i\alpha\end{pmatrix}
$$
where $\alpha,\,\beta:D^\times\to\R$ and $v,\,w:D^\times\to\C$ are all smooth, and $z = re^{i\theta}$. 

We now show how the fact that $\Phi$ is holomorphic and $A$ is flat restricts these coefficients, and then 
use this information to gauge away the off-diagonal terms. 

\medskip

\noindent{\bf $\mb{\Phi}$ holomorphic:} 
We compute the terms in the equality 
\[
\delb_A\Phi^\fid_\infty :=\delb\Phi^\fid_\infty+[A^{0,1}\wedge\Phi^\fid_\infty]=0.
\]
First, 
\begin{equation}
\delb \Phi_\infty^\fid=\tfrac{1}{4}r^{-\frac 12}e^{i\theta}\begin{pmatrix}0&1\\-e^{i\theta}&0\end{pmatrix}d\bar z \wedge dz. 
\label{db1}
\end{equation}
Next, using $dr = \tfrac{1}{2}(e^{-i\theta}dz+e^{i\theta}d\bar z)$ and $d\theta = \tfrac{1}{2ir}(e^{-i\theta}dz-e^{i\theta}d\bar z)$, we have
\[
A^{0,1}=\tfrac{1}{2}e^{i\theta}(A_r+\tfrac{i}{r}A_\theta)d\bar z=\tfrac 12 e^{i\theta}\begin{pmatrix}-\frac{\alpha}{r}+i\beta&w+
\frac{i}{r}v\\ -\bar w -\frac{i}{r}\bar v&\frac{\alpha}{r}-i\beta\end{pmatrix}d\bar z,
\]
so that
\begin{multline}
[A^{0,1} \wedge \Phi_\infty^\fid]\\ = \frac12 r^{1/2} e^{i\theta} \begin{pmatrix}  
e^{i\theta} w + \bar{w} + \frac{i}{r} (e^{i\theta}v + \bar{v})&  2 (-\frac{\alpha}{r} + i\beta) \\
2 e^{i\theta} (\frac{\alpha}{r} - i\beta)  & - \left(e^{i\theta} w + \bar{w} + \frac{i}{r} (e^{i\theta}v + \bar{v})\right)
\end{pmatrix} 
d\bar z \wedge dz.
\label{db2}
\end{multline}
Adding \eqref{db1} to \eqref{db2} and equating coefficients to zero gives $\alpha=\frac 14$, $\beta=0$, and 
\be\label{a.v.w}
e^{i\theta}v+\bar v=e^{i\theta}w+\bar w=0.
\ee
We have used here the identity $e^{i\theta}u+\bar u=2e^{i\theta/2}\Re(e^{i\theta/2}u)$ (for any $u$) to separate into real and imaginary 
parts. Altogether, we have now obtained that 
\be\label{res.sec.step}
\begin{split}
&A=\begin{pmatrix}0&w\\-\bar w&0\end{pmatrix}dr+\begin{pmatrix} i/4 &v\\-\bar v&- i/4 \end{pmatrix}d\theta
\quad \mbox{and} \\
&A^{0,1}= \tfrac 12 e^{i\theta}\begin{pmatrix} -\frac{1}{4r}&w+\frac{i}{r}v\\-\bar{w} - \frac{i}{r}\bar v&\frac{1}{4r}\end{pmatrix}d\bar z
\end{split}
\ee
with $v,w$ subject to \eqref{a.v.w}.

\medskip

\noindent{\bf Flatness:} The equation $F_A = 0$ expands as 
$$
\partial_rA_\theta-\partial_\theta A_r+[A_r,A_\theta]=0.
$$
Substituting the expressions for $A_r$ and $A_\theta$ above now give that $\Im(\bar wv)=0$, which is in fact
the same as \eqref{a.v.w}, and more significantly, 
\be\label{a.is.flat}
\partial_rv = i P w, \qquad \mbox{where}\quad P = \tfrac{1}{i} \partial_\theta + \tfrac{1}{2}. 
\ee

\medskip

We now wish to find a gauge transformation $g_\mu$ in the stabilizer of $\Phi_\infty^\fid$ which simplifies $A$ even further. 
We assume that $g_\mu$ is the exponential of some section $\gamma_\mu$ of the infinitesimal stabilizer bundle, so using
the earlier expression for $\gamma_\mu$ we have that 
\begin{equation}
g_\mu = \begin{pmatrix} 
\cosh\big(e^{i\theta/2}\mu\big)&e^{-i\theta/2}\sinh\big(e^{i\theta/2}\mu\big)\\ 
e^{i\theta/2}\sinh\big(e^{i\theta/2}\mu\big)&\cosh\big(e^{i\theta/2}\mu\big)\end{pmatrix}
=:\begin{pmatrix}\eta_1 & \eta_2\\e^{i\theta}\eta_2 & \eta_1\end{pmatrix},\label{stab.gau.tra}
\end{equation}
where the final equality defines $\eta_1$ and $\eta_2$. Note that although $e^{\pm i\theta/2}$ is only defined on the 
slit domain $D^\times_- = D^\times \setminus (-1,0)$, both $\eta_1$ and $\eta_2$ make sense on all of $D^\times$. 

Now, $(A^{0,1})^{g_\mu}=g^{-1}_\mu A^{0,1}g_\mu+g^{-1}_\mu\delb g_\mu$, so we compute
\[
g^{-1}_\mu\delb g_\mu = 
\begin{pmatrix}
e^{2i\theta} \eta_2^2/4r &  e^{-i\theta} D\mu + \frac{1}{4r} e^{i\theta} \eta_1 \eta_2 \\[0.5ex]
D\mu - \frac{1}{4r} e^{2i\theta} \eta_1 \eta_2 & - e^{2i\theta} \eta_2^2/ 4r
\end{pmatrix} \, d\bar z,
\]
where we have written
\[
D = e^{i\theta/2} \del_{\bar{z}} e^{i\theta/2} 
\]
and are using the identity $\eta_1^2 - e^{i\theta} \eta_2^2 = 1$. Setting $U = w + (i/r) v$, and recalling from 
\eqref{a.v.w} that $\bar{w} + \frac{i}{r} \bar{v} = - e^{i\theta} U$, then further computation gives
\[
g^{-1}_\mu A^{0,1}g_\mu = \frac12 e^{i\theta}
\begin{pmatrix}
- (1/4r) (\eta_1^2 + e^{i\theta} \eta_2^2 ) & U -(1/2r) \eta_1 \eta_2 \\[0.5ex]
e^{i\theta}( (1/2r) \eta_1 \eta_2 + U)  &  (1/4r)(\eta_1^2 + e^{i\theta} \eta_2^2 )
\end{pmatrix} \, d\bar{z}.
\]
Adding these terms together yields 
\begin{equation}\label{traf.equ.a01}
(A^{0,1})^{g_\mu} = 
\begin{pmatrix}
-\frac{1}{8r} e^{i\theta}  & e^{-i\theta} D\mu + \frac12 e^{i\theta}U \\ 
D\mu + \frac12 e^{2i\theta} U & \frac{1}{8r} e^{i\theta}
\end{pmatrix}\, d\bar z.
\end{equation}

\medskip

Recall that our goal is to gauge away the off-diagonal components. To do this, we must choose $\mu$ so that
$D\mu + \frac12 e^{2i\theta} U = 0$. Using that
\[
D= e^{i\theta} \left( \del_{\bar{z}} - \frac{e^{i\theta}}{4r}\right), \quad \mbox{and}\quad 
\del_{\bar{z}} = \frac12 e^{i\theta} \left( \del_r + \frac{i}{r}\del_\theta\right),
\]
we write this equation, in terms of the operator $P$ in \eqref{a.is.flat}, as 
\begin{equation}
(\del_r - \frac{1}{r} P) \mu = - U := -w - \frac{i}{r} v.  
\label{tosolve}
\end{equation}

We solve this in a slightly unexpected way, by showing that the individual equations $\del_r \mu = -w$, $P\mu = iv$
are compatible. Indeed, $\del_r P \mu = P \del_r \mu$ is the same as $\del_r (iv) = P(-w)$, which follows precisely 
from the flatness of $A$ (as must be the case!). Noting that $P$ is invertible, we can now simply take $\mu = P^{-1} (iv)$,
and this satisfies both equations. 

The final point is that if we write $\overline{P}  = -Q$, where $Q = P-1$, then $Q(e^{i\theta}\mu) = e^{i\theta} P\mu$, so that
\[
Q( e^{i\theta} \mu + \overline{\mu}) = e^{i\theta} P\mu - \overline{P} \bar{\mu} = e^{i\theta} iv - \overline{ (iv)} = i( e^{i\theta}v + \bar{v}) = 0,
\]
by \eqref{a.v.w} again. Since $Q$ is also invertible, $e^{i\theta} \mu + \bar{\mu} = 0$, hence $\gamma_\mu$ is
skew-Hermitian and $g_\mu$ is a unitary gauge transformation, so that $A^g$ is still flat. 
\end{proof}
%
\subsection{The desingularized fiducial solutions}\label{des.fid.sol}
We now find a family of solutions $(A_t^\fid,\Phi_t^\fid)$ of Hitchin's rescaled equation
\begin{equation}\label{sca.hit.equ}
\mc H_t(A,\Phi)=(F_A+t^2[\Phi \wedge \Phi^*],\bar\partial_A\Phi),\quad t>0,
\end{equation}
which are smooth across $z = 0$ and which converge to $(A_\infty^\fid,\Phi_\infty^\fid)$ as $t\nearrow \infty$. Since this limiting pair is purely diagonal and purely off-diagonal, respectively, in the fiducial frame, it is natural to impose that $A_t^\fid$ and $\Phi_t^\fid$ have the same form. Thus we make the ansatz that in the same fiducial frame,
\be\label{tfid}
\begin{array}{rl}
A_t^\fid & = f_t(r) \begin{pmatrix} 1 & 0 \\ 0 & -1 \end{pmatrix} \left(\frac{dz}{z} - \frac{d\bar{z}}{\bar{z}}\right), \\
\Phi_t^\fid & =\varphi_t^\fid dz =\begin{pmatrix} 0 & r^{1/2} e^{h_t(r)} \\ r^{1/2}e^{i\theta} e^{-h_t(r)} & 0 \end{pmatrix}\, dz
\end{array}
\ee
(according to~\cite{mawo93} this ansatz essentially captures all possible solutions). We calculate that, 
\begin{multline*}
F_{A_t^\fid} + t^2[\varphi_t^\fid \wedge(\varphi_t^\fid)^*] \\ =
\left(\left( \frac{1}{\bar{z}} \delb_z f_t -\frac{1}{z}\delb_{\bar{z}}f_t\right) dz \wedge d\bar{z} +  
2rt^2\sinh(2h_t)\right)\, a_1 \\ = (\frac{1}{r} \del_r f_t -2rt^2\sinh(2h_t)) \, a_1, 
\end{multline*}
where $a_1=\begin{pmatrix} 1 & 0 \\ 0 & -1 \end{pmatrix}$, and in addition, 
$$
\delb_{A_t^\fid}\Phi_t^\fid = \left( \delb_{\bar{z}} \varphi_t^\fid  - \frac{f_t}{\bar{z}}[ a_1, \varphi_t^\fid ] \right) 
d\bar{z} \wedge dz = 0.
$$
After some computation, we are led to the pair of equations
\begin{eqnarray}
\del_r f_t(r) & = & 2t^2 r^2 \sinh 2h_t  \label{eqf} \\
f_t(r) & = &\frac{1}{8} + \frac{1}{4} r \partial_r h_t(r). \label{ffromh}
\end{eqnarray}
Now apply $r\partial_r$ to \eqref{ffromh} and insert into \eqref{eqf} to get
\begin{equation}
(r\partial_r)^2 h = 8  t^2 r^3 \sinh 2h. 
\label{painleve}
\end{equation}
To simplify this, set $\rho = \frac{8}{3} t r^{3/2}$, so that $r\partial_r = \frac{3}{2} \rho \partial_\rho$. 
Writing $h_t(r) = \psi(\rho)$ for some function $\psi$, we obtain 
\begin{equation}
(\rho \partial_\rho)^2 \psi = \frac{1}{2} \rho^2 \sinh 2\psi.
\label{sceqn}
\end{equation}
which is $t$-independent. Once we identify a suitable solution of this equation, we will have the solutions 
\begin{equation}
h_t(r) = \psi( \frac{8}{3}t r^{3/2}), \quad f_t(r) = \frac{1}{8} + \frac{1}{4} r \partial_r h_t
\label{sch}
\end{equation}
of the original system. The equation \eqref{sceqn} is of Painlev\'e type. It is known \cite{mtw77},~\cite{wi01} 
that there exists a unique solution which decays exponentially and has a the correct behavior as $\rho \to 0$, namely
\begin{equation}
\begin{array}{rl}
\bullet\ & \psi(\rho) \sim -\log (\rho^{1/3} \left( \sum_{j=0}^\infty a_j \rho^{4j/3}\right), \quad \rho \searrow 0 \\[0.5ex]
\bullet\ & \psi(\rho) \sim K_0(\rho) \sim \rho^{-1/2} e^{-\rho}, \quad \rho \nearrow \infty\\[0.5ex]
\bullet\ & \psi(\rho)\mbox{ is monotonically decreasing (and hence strictly positive)}.
\end{array}
\label{proph}
\end{equation}
The notation $A \sim B$ indicates a complete asymptotic expansion. In the first case, for example, for each $N \in \mathbb N$, 
\[
\left|\rho^{-1/3} e^{-\psi(\rho)} - \sum_{j=0}^N a_j \rho^{4j/3}\right| \leq C \rho^{4(N+1)/3},
\]
with a corresponding expansion for any derivative. The function $K_0(\rho)$ is the Macdonald function (or Bessel function 
of imaginary argument) of order $0$; it has a complete asymptotic expansion involving terms of the
form $e^{-\rho} \rho^{-1/2-j}$, $j \geq 0$, as $\rho \to \infty$. 

All of these calculations were sketched to us in a personal communication by Andy Neitzke, and we gratefully acknowledge his assistance. 

\medskip

From \eqref{proph} we can now compute the asymptotics of $f_t(r)$ and $h_t(r)$. 

\begin{lemma}\label{f_t-h_t-function}
The functions $f_t(r)$ and $h_t(r)$ have the following properties: 
\begin{enumerate}[1.]
\item[a)] As a function of  $r$, $f_t$ has a double zero at $r=0$ and increases monotonically from $f_t(0) = 0$ to the limiting 
value $1/8$ as $r \nearrow \infty$.  In particular, $0 \leq f_t \leq \frac 18$.

\item[b)]  As a function of $t$, $f_{t}$ is also monotone increasing. Further, $\lim_{t \nearrow \infty} f_t = f_\infty \equiv \frac18$ 
uniformly in $\mc C^\infty$ on any half-line $[r_0,\infty)$, for $r_0 > 0$. 

\item[c)] There are uniform estimates 
\[
\sup_{r >0}  r^{-1} f_t(r) \leq C t^{2/3} \quad \text{and}\quad \sup_{r >0} r^{-2} f_t(r) \leq C t^{4/3},
\]
where $C$ is independent of $t$. 

\item[d)] When $t$ is fixed and $r \searrow 0$, $h_t(r) \sim -\tfrac{1}{2} \log r + b_0 + \ldots$, where $b_0$ is an explicit constant.
On the other hand, $|h_t(r)| \leq C \exp( -\tfrac83 t r^{3/2})/ ( t r^{3/2})^{1/2}$ uniformly for $t \geq t_0 > 0$, $r \geq r_0 > 0$. 
\end{enumerate}
\end{lemma}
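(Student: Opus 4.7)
The plan is to collapse the entire family $\{f_t, h_t\}$ onto a single profile through the scaling $\rho = \tfrac{8}{3} t r^{3/2}$ already used to derive \eqref{sceqn}. Since $r \partial_r = \tfrac{3}{2}\rho\partial_\rho$, relation \eqref{sch} becomes
\[
f_t(r) = F(\rho), \qquad F(\rho) := \tfrac{1}{8} + \tfrac{3}{8}\rho\psi'(\rho),
\]
so every assertion in the lemma reduces to properties of this single function $F$, combined with the fact that $\rho = \tfrac{8}{3} t r^{3/2}$ is strictly increasing in each of $r$ and $t$. The linchpin is monotonicity of $F$: rewriting \eqref{sceqn} as $\rho\partial_\rho(\rho\psi') = \tfrac{1}{2}\rho^2\sinh(2\psi)$ yields $(\rho\psi')'(\rho) = \tfrac{1}{2}\rho\sinh(2\psi)$, which is strictly positive for $\rho > 0$ thanks to the positivity of $\psi$ in \eqref{proph}. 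Hence $F'(\rho) = \tfrac{3}{16}\rho\sinh(2\psi) > 0$ on $(0,\infty)$.

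For parts (a) and (b) I would read off the endpoint values of $F$ from \eqref{proph}. The $j=0$ term gives $\psi(\rho) = -\tfrac{1}{3}\log\rho - \log a_0 + O(\rho^{4/3})$ as $\rho \searrow 0$, so $\rho\psi'(\rho) \to -\tfrac{1}{3}$ and $F(0) = 0$; while as $\rho \nearrow \infty$, $\psi \sim K_0(\rho)$ together with all of its derivatives decays exponentially, so $\rho\psi'(\rho) \to 0$ and $F(\infty) = \tfrac{1}{8}$. Combined with the monotonicity of $F$ and of $\rho$ in each variable, this yields (a) and the monotonicity part of (b). The uniform $\mc C^\infty$ convergence on $[r_0,\infty)$ follows because every $r$-derivative of $F(\tfrac{8}{3}t r^{3/2})$ is a polynomial in $t$ times a derivative of $F$ evaluated at $\rho \geq \tfrac{8}{3} t r_0^{3/2}$, and these derivatives of $F$ decay exponentially as $\rho \to \infty$.

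Part (c) becomes a bound on $\rho^{-2/3}F(\rho)$ and $\rho^{-4/3}F(\rho)$. Expanding one further term of \eqref{proph} gives $\psi(\rho) = -\tfrac{1}{3}\log\rho - \log a_0 - (a_1/a_0)\rho^{4/3} + O(\rho^{8/3})$, hence $F(\rho) = -\tfrac{1}{2}(a_1/a_0)\rho^{4/3} + O(\rho^{8/3})$ near $\rho = 0$ (with $a_1/a_0 < 0$ forced a posteriori by $F \geq 0$). Thus $\rho^{-2/3}F(\rho)$ vanishes like $\rho^{2/3}$ at $0$ and like $\tfrac{1}{8}\rho^{-2/3}$ at $\infty$, while $\rho^{-4/3}F(\rho)$ has a finite limit at $0$ and vanishes at $\infty$, so both quantities are uniformly bounded on $(0,\infty)$. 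Substituting $r^{-k} = (\tfrac{8t}{3})^{2k/3}\rho^{-2k/3}$ then yields $\sup_{r>0} r^{-k}f_t(r) \leq C\, t^{2k/3}$ for $k = 1, 2$.

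Finally, part (d) is a direct substitution into the asymptotics of $\psi$. Plugging the small-$\rho$ expansion into $h_t(r) = \psi(\tfrac{8}{3} t r^{3/2})$ gives
\[
h_t(r) = -\tfrac{1}{2}\log r + b_0 + O(r^2), \qquad b_0 = -\log a_0 - \tfrac{1}{3}\log\bigl(\tfrac{8t}{3}\bigr),
\]
which is explicit as claimed, while the large-$\rho$ bound $|\psi(\rho)| \leq C \rho^{-1/2} e^{-\rho}$ directly produces the stated exponential estimate on $\{t \geq t_0,\ r \geq r_0\}$. I anticipate no serious obstacle once the reduction to $F$ is in place; the only tactical point is to remember that the positivity of $F'$ hinges on $\psi > 0$, which is one of the qualitative features recorded in \eqref{proph}, and that the final $a_1/a_0 < 0$ is a free consequence of $F \geq 0$ rather than something extracted from \cite{mtw77}.
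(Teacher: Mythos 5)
Your proposal is correct and follows essentially the same route as the paper: the same substitution $\rho=\tfrac83 t r^{3/2}$ reducing everything to the single profile $\eta(\rho)=\tfrac18+\tfrac38\rho\psi'(\rho)$, the same computation $\eta'=\tfrac3{16}\rho\sinh(2\psi)\ge 0$ from \eqref{sceqn} and $\psi>0$, and the same use of the endpoint asymptotics in \eqref{proph} for parts a)--d). Your small-$\rho$ coefficient $-\tfrac12(a_1/a_0)\rho^{4/3}$ is in fact the correct constant (the paper's displayed $-\tfrac{a_1}{a_0}$ is off by a harmless factor of $2$), and your extra details on the uniform $\mc C^\infty$ convergence and the explicit $b_0$ are consistent with the paper's argument.
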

\begin{proof}
Define $\eta(\rho) = \tfrac 18 + \tfrac38 \rho\psi'(\rho)$, where $\rho=\frac{8t}{3}r^{3/2}$, so that $f_t(r)=\eta(\rho)$. 
By \eqref{sceqn}, 
\[
\eta'(\rho) = \tfrac 38 \rho\bigl(\psi''(\rho) + \rho^{-1}\psi'(\rho)\bigr) = \tfrac{3}{16}\rho \sinh(2\psi(\rho)),
\]
which implies that $\eta'(\rho) \geq 0$ since $\psi\geq0$.  In fact, \eqref{proph} also implies that $\lim_{\rho \to \infty}\eta(\rho)=\frac 18$
and 
\begin{equation}\label{eq:asymptf}
\eta(\rho)\sim\frac{1}{8}+\frac{3}{8}\rho\left(-\frac{1}{3\rho}-\frac{4a_1}{3a_0}\rho^{\frac{1}{3}}+O(\rho^{\frac{4}{3}})\right)=-\frac{a_1}{a_0}\rho^{\frac{4}{3}}+O(\rho^{\frac{7}{3}}),
\end{equation}
when $\rho$ is small, so $f_t$ has a double zero at $0$ as a function of $r$. This proves $a)$ and $b)$. 
Substituting $r=(\frac{3 \rho}{8t})^{2/3}$ now gives
\[
\frac{f_t}{r} = \left(\frac{8t}{3}\right)^{2/3}\frac{\eta(\rho)}{\rho^{2/3}} \quad \text{and} \quad \frac{f_t}{r^2} = \left(\frac{8t}{3}\right)^{4/3} 
\frac{\eta(\rho)}{\rho^{4/3}}.
\]
The estimates c) thus follow from \eqref{eq:asymptf}, which implies that $\eta(\rho)/\rho^{2/3}$ and $\eta(\rho)/\rho^{4/3}$ are 
bounded for $\rho > 0$. Finally, d) also follows directly from~\eqref{proph}.
\end{proof}

\begin{corollary}
The solutions $(A_t^\fid, \Phi_t^\fid)$ of the rescaled Hitchin equation are smooth at $z=0$. Further, they converge exponentially in $t$, uniformly in $\calC^\infty$ on any exterior region $r \geq r_0 > 0$ to $(A_\infty^\fid, \Phi_\infty^\fid)$.
\end{corollary}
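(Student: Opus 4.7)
The plan is to deduce both claims directly from the formulas \eqref{tfid} and \eqref{sch} together with the asymptotic properties of $\psi$ recorded in \eqref{proph} and the scalar estimates gathered in Lemma \ref{f_t-h_t-function}. Since the only angular dependence of $(A_t^\fid,\Phi_t^\fid)$ enters through the factors $dz/z-d\bar z/\bar z=2i\,d\theta$ and $e^{i\theta}$, everything reduces to statements about the radial functions $f_t(r)$ and $h_t(r)$, and the change of variable $\rho=\tfrac{8t}{3}r^{3/2}$ converts $t\to\infty$ at fixed positive $r$ into the large-$\rho$ regime for $\psi$.

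For smoothness at $z=0$, I would first rewrite the small-$\rho$ expansion in \eqref{proph} in the form
$$
\psi(\rho)=-\tfrac{1}{3}\log\rho+G(\rho^{4/3}),
$$
where $G$ is a convergent power series with no logarithmic part, obtained from
$$
\log\Bigl(\rho^{1/3}\sum_{j\geq 0}a_j\rho^{4j/3}\Bigr)=\tfrac{1}{3}\log\rho+\log a_0+\log\Bigl(1+\tfrac{a_1}{a_0}\rho^{4/3}+\cdots\Bigr).
$$
Substituting $\rho=\tfrac{8t}{3}r^{3/2}$ gives $h_t(r)=-\tfrac{1}{2}\log r+G_t(r^2)$ with $G_t$ smooth in $r^2$, consistent with Lemma \ref{f_t-h_t-function}(d). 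The two nonzero entries of $\Phi_t^\fid$ then become
$$
r^{1/2}e^{h_t(r)}=e^{G_t(r^2)},\qquad r^{1/2}e^{i\theta}e^{-h_t(r)}=z\,e^{-G_t(r^2)},
$$
each smooth at $z=0$. Similarly, from \eqref{ffromh} and \eqref{eq:asymptf}, $f_t(r)$ is a power series in $r^2$ vanishing to second order, so $f_t(r)=r^2 F_t(r^2)$ with $F_t$ smooth; hence
$$
A_t^\fid=2if_t(r)\,d\theta\cdot\mathrm{diag}(1,-1)=2iF_t(r^2)\bigl(-y\,dx+x\,dy\bigr)\cdot\mathrm{diag}(1,-1)
$$
is smooth across the origin.

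For exponential convergence on $\{r\geq r_0\}$, I would invoke the large-$\rho$ part of \eqref{proph}, a \emph{complete} asymptotic expansion, which gives $|\psi(\rho)|\leq C\rho^{-1/2}e^{-\rho}$ and analogous bounds for every derivative. Because $\rho(r,t)=\tfrac{8t}{3}r^{3/2}\geq \tfrac{8r_0^{3/2}}{3}\,t$ on this region, this already yields $|h_t(r)|\leq C(r_0)t^{-1/2}e^{-c(r_0)t}$ with $c(r_0)=\tfrac{8}{3}r_0^{3/2}$. Differentiating $h_t(r)=\psi(\rho(r,t))$ in $r$ merely introduces polynomial factors in $t$ and $r$ which are absorbed by $e^{-\rho}$; hence all $C^k$ norms of $h_t$ on any annulus $r_0\leq r\leq R$ are exponentially small in $t$. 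The identity $f_t-\tfrac{1}{8}=\tfrac{1}{4}r\partial_r h_t$ from \eqref{ffromh} then yields the same exponential $C^k$ bound for $f_t-\tfrac{1}{8}$. Comparing with \eqref{lim.fid.a}--\eqref{lim.fid.phi} (which correspond to $f_\infty\equiv\tfrac{1}{8}$ and $h_\infty\equiv 0$) delivers the claimed $C^\infty$ convergence of $(A_t^\fid,\Phi_t^\fid)$ to $(A_\infty^\fid,\Phi_\infty^\fid)$ at exponential rate, uniform on every exterior region.

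Granted Lemma \ref{f_t-h_t-function} and the complete asymptotic expansion \eqref{proph}, the argument is essentially bookkeeping. The one point that genuinely needs care is obtaining smoothness at $z=0$ rather than merely $C^1$ regularity: this rests on the observation that the fractional powers of $\rho$ in the small-$\rho$ expansion of $\psi$ are precisely of the form $\rho^{4j/3}$, so that after the substitution $\rho=\tfrac{8t}{3}r^{3/2}$ they become \emph{even} integer powers of $r$. This structural feature is already built into \eqref{proph}, and if needed can be verified inductively by inserting the ansatz into \eqref{sceqn}.
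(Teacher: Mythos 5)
Your argument is correct and follows essentially the same route as the paper: the paper's proof is a two-line appeal to Lemma~\ref{f_t-h_t-function} and \eqref{proph}, and you supply exactly the bookkeeping it suppresses, including the key structural point that the powers $\rho^{4j/3}$ become even powers of $r$ so that smoothness (not just continuity) at $z=0$ follows. The only tiny imprecision is calling the small-$\rho$ series for $\psi$ a convergent power series when \eqref{proph} only guarantees a complete asymptotic expansion with derivatives, but that weaker statement already suffices for your smoothness argument.
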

\begin{proof}
The preceding Lemma gives that for fixed $t$, $r^{1/2} e^{h_t(r)} \sim c_0 + \ldots$ and $r^{1/2} e^{i\theta} e^{-h_t(r)} \sim z + \ldots$ 
as $r\to0$, and similarly, $f_t \sim c_1|z|^2 + \ldots$, while if $r$ is fixed, then 
\[
(A_t^\fid, \Phi_t^\fid) \longrightarrow (A_\infty^\fid, \Phi_\infty^\fid)
\]
exponentially in $t$, uniformly in $\calC^\infty$ on any exterior region $r \geq r_0 > 0$. 
\end{proof}
%
\subsection{The complex gauge orbit of the fiducial solutions}
We show now that all of the fiducial solutions $(A_t^\fid,\Phi_t^\fid)$ are equivalent under the complex gauge action. Towards that end, define in 
the fixed fiducial frame the pair
$$
A_0=0,\quad\Phi_0=\begin{pmatrix}0&1\\z&0\end{pmatrix}\, dz.
$$

\begin{proposition}\label{gau.orb.fid}
\begin{enumerate}[1.]
	\item Over $D$, the fiducial solution $(A_t^\fid,\Phi_t^\fid)$ is complex gauge equivalent to $(A_0,\Phi_0)$. In particular, all fiducial solutions for $0<t<\infty$ are mutually complex gauge equivalent. 
	\item Over $D^\times$, the limiting fiducial solution $(A_\infty^\fid,\Phi_\infty^\fid)$ is complex gauge equivalent to $(A_0,\Phi_0)$ by the singular gauge transformation
$$
g_\infty=\begin{pmatrix}|z|^{-\frac{1}{4}}&0\\0&|z|^{\frac{1}{4}}\end{pmatrix},
$$
i.e.,\ $(A_0,\Phi_0)^{g_\infty}=(A_\infty^\fid,\Phi_\infty^\fid)$.
\end{enumerate}
\end{proposition}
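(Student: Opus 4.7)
The plan is to exhibit explicit diagonal complex gauge transformations in both parts. Both the pair $(A_0,\Phi_0)$ and each fiducial solution $(A_t^\fid,\Phi_t^\fid)$ are expressed in the same fiducial frame, and the off-diagonal structure of $\Phi_0$, $\Phi_t^\fid$ and $\Phi_\infty^\fid$ strongly suggests that a gauge transformation of the form $g = \mathrm{diag}(a, 1/a)$ with $a$ a positive function of $r$ alone will suffice. Because $g$ is real and diagonal, $g^* = g$, and the complex gauge action formula \eqref{cplxgaugeaction} then simplifies greatly: for $A_0 = 0$ one has
\[
(A_0)^{g} = g^{-1}\bar\partial g + g\,\partial g^{-1} = (\bar\partial \log a - \partial \log a)\,\sigma_3,\qquad \sigma_3 = \begin{pmatrix}1 & 0\\ 0 & -1\end{pmatrix},
\]
and for any radial function $\phi(r)$ one computes $\bar\partial\phi - \partial\phi = \tfrac{r\phi'(r)}{2}\bigl(\tfrac{d\bar z}{\bar z} - \tfrac{dz}{z}\bigr)$. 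This reduces matching connection matrices to a single ODE identity.

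For part~(2), I would simply take $g_\infty = \mathrm{diag}(|z|^{-1/4}, |z|^{1/4})$ as given and verify directly that $g_\infty^{-1}\Phi_0 g_\infty = \Phi_\infty^\fid$ (immediate from $z = re^{i\theta}$) and that with $\log a = -\tfrac14 \log r$, the displayed formula yields $(A_0)^{g_\infty} = \tfrac{1}{8}\sigma_3\bigl(\tfrac{dz}{z} - \tfrac{d\bar z}{\bar z}\bigr) = A_\infty^\fid$. This is a one-line calculation.

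For part~(1), I would propose the ansatz
\[
g_t = \begin{pmatrix} r^{-1/4} e^{-h_t(r)/2} & 0 \\ 0 & r^{1/4} e^{h_t(r)/2}\end{pmatrix},
\]
so that $\det g_t = 1$. Matching $g_t^{-1}\Phi_0 g_t$ with $\Phi_t^\fid$ from \eqref{tfid} is algebraic and uses only $z = re^{i\theta}$. The main content is matching connection matrices: with $a = r^{-1/4}e^{-h_t(r)/2}$, the radial function identity above gives
\[
(A_0)^{g_t} = \Bigl(\tfrac{1}{8} + \tfrac{r h_t'(r)}{4}\Bigr)\,\sigma_3 \Bigl(\tfrac{dz}{z} - \tfrac{d\bar z}{\bar z}\Bigr),
\]
which equals $A_t^\fid$ precisely by virtue of the defining relation \eqref{ffromh} between $f_t$ and $h_t$. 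Hence the key step is simply invoking that ODE identity; no further analysis of the Painlev\'e III equation is needed.

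The expected obstacle is checking that $g_t$ extends smoothly across $z = 0$, since the factor $r^{-1/4}$ is singular and $h_t(r) \sim -\tfrac12\log r + b_0 + \ldots$ is also singular there (Lemma~\ref{f_t-h_t-function}(d)). The resolution is that the singularities cancel: from the expansion \eqref{proph} of $\psi$ near $\rho = 0$, one sees that $r^{-1/2}e^{-h_t(r)}$ admits a complete asymptotic expansion in powers of $r^2 = z\bar z$, and is smooth and strictly positive on $D$. Its positive square root $r^{-1/4} e^{-h_t/2}$ is therefore smooth in $z\bar z$, hence smooth on all of $D$, and similarly for the reciprocal entry. The ``in particular'' statement is then immediate: for any $0 < s, t < \infty$ the composite $g_t \circ g_s^{-1}$ realizes a complex gauge equivalence between $(A_s^\fid, \Phi_s^\fid)$ and $(A_t^\fid, \Phi_t^\fid)$ on $D$.
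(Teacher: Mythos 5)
Your proposal is correct and follows essentially the same route as the paper: the paper's gauge transformation is $\mathrm{diag}(e^{u_t},e^{-u_t})$ with $u_t=-\tfrac14\log r-\tfrac12 h_t$, exactly your $g_t$, and the matching of connection matrices likewise comes down to the relation \eqref{ffromh}. Your additional remark that $r^{-1/4}e^{-h_t/2}$ extends smoothly across $z=0$ via the expansion \eqref{proph} is a small bonus the paper leaves implicit.
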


\begin{remark}
From~\eqref{proph} it follows that $A_t^\fid\to A_0$ as $t\to0$. However, $h_t(r)\sim-\log\tfrac 83 a_0\sqrt{r}t^{\tfrac 13}$ for small $t$ so that $\Phi^\fid_t$ actually diverges as $t\to0$.
\end{remark}

\begin{proof}
The second assertion is a straightforward computation so we focus on the first.  For simplicity, omit
the superscript `$\fid$' from all quantities. We seek a complex gauge transformation of the form
$$
g = \begin{pmatrix} e^{u_t} & 0 \\ 0 & e^{-u_t}\end{pmatrix}\in\Gamma(D,\SL(E)), \quad u_t = u_t(r), 
$$
so that $(A_0,\Phi_0)^g=(A^\fid_t,\Phi^\fid_t)$. Since $\partial_{\bar z} 
=\frac{1}{2}e^{i\theta}\partial_r$ on rotationally symmetric functions and $A_0 = 0$, we have 
$$
g^{-1}\circ\delb_{A_0}\circ g=\delb+g^{-1}\delb g=\delb+\frac{1}{2} e^{i\theta}
\begin{pmatrix}\partial_r u_t&0\\0&-\partial_r u_t\end{pmatrix}d \bar z.
$$
On the other hand,
$$
\delb_{A_t}=\delb-f_t\begin{pmatrix}1&0\\0&-1\end{pmatrix}\frac{d\bar z}{\bar z}.
$$
Thus $g^{-1}\circ\delb_{A_0}\circ g=\delb_{A_t}$ if and only if
\[
\partial_ru_t=-\frac{1}{4r}-\frac{1}{2}\partial_rh_t,
\]
which has the solution 
$$
u_t=-\frac{1}{4}\log r-\frac{1}{2}h_t.
$$
Hence $A_0^g=A_t$; moreover
$$
g^{-1}\Phi_0g=\begin{pmatrix}0&e^{-2u_t}\\ze^{2u_t}\end{pmatrix}dz,
$$
and $e^{-2u_t}=r^{\frac{1}{2}}e^{h_t}$, so that $g^{-1}\Phi_0g=\Phi_t$. 
\end{proof}
%
%
%
\section{Limiting configurations}\label{lim.conf}
We now start on the global aspects of this problem. As explained in the introduction, our existence theorem for solutions of \cref{hit.equ.uni} involves patching together copies of the fiducial solution with what we call a limiting configuration.  We have already explored these fiducial solutions, and our goal in this section is to describe the other building block, the limiting configurations.

\begin{definition}
Let $(\delb,\Phi)$ be a Higgs bundle, where $\Phi$ is simple, and suppose that $H$ is a Hermitian metric on the complex vector bundle $E$. 
A {\em limiting configuration} is a Higgs pair $(A_\infty,\Phi_\infty)$ over $X^\times$ which satisfies the decoupled Hitchin equations 
\begin{equation}
F_{A_\infty}^\perp=0,\quad[\Phi_\infty\wedge\Phi_\infty^*]=0,\quad\delb_{A_\infty}\Phi_\infty=0,
\label{dechiteq}
\end{equation}
and which agrees with $(A_\infty^\fid, \Phi_\infty^\fid)$ near each point of $\mf p_\Phi$, with respect to some holomorphic coordinate system and unitary frame for $E$.
Since we are in the fixed determinant case, we require $A_\infty$ and $\Phi_{\infty}$ to be trace-free, the former relative to some 
fixed background connection.
\label{lconfdef}
\end{definition}

The main objective in this section is to prove the following 

\begin{theorem}\label{ex.out.sol}
Let $(\delb,\Phi)$ be a Higgs bundle with simple Higgs field. Then there is a Hermitian metric $H_0$ so that if 
$A =A(H_0,\delb)$ is the associated Chern connection then the pair $(A,\Phi)$ is complex gauge equivalent via
some transformation $g_\infty\in\Gamma(X^\times,\SL(E))$ to a limiting configuration $(A_\infty,\Phi_\infty)$, i.e., 
$(A_\infty,\Phi_\infty):=(A,\Phi)^{g_\infty}$. 
\end{theorem}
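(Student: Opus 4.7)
The strategy is to construct the complex gauge transformation $g_\infty$ directly from the spectral decomposition of $\Phi$ on $X^\times$, matching it to the explicit fiducial model near each zero of $\det\Phi$, and to read off $H_0$ a posteriori.

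\emph{Step 1 (Local fiducial model).} Fix $p \in \mathfrak{p}_\Phi$, choose a holomorphic coordinate $z$ centered at $p$ with $\det\Phi = -z\,dz^2$, and a local holomorphic frame. A routine local argument, using that $\Phi$ is trace-free with a simple determinantal zero, shows that one may arrange the frame so that $\Phi$ takes the companion form $\Phi_0 = \bigl(\begin{smallmatrix} 0 & 1 \\ z & 0 \end{smallmatrix}\bigr)\, dz$ of Proposition~\ref{gau.orb.fid}. Part (2) of that proposition then exhibits the singular complex gauge transformation $g^{\mathrm{loc}}_\infty = \mathrm{diag}(|z|^{-1/4}, |z|^{1/4})$ which carries $(0,\Phi_0)$ on the punctured disk to $(A_\infty^\fid, \Phi_\infty^\fid)$. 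This prescribes the asymptotic form required of $g_\infty$ near each puncture.

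\emph{Step 2 (Spectral decomposition).} On $X^\times$, the trace-free simple Higgs field $\Phi$ is pointwise diagonalizable with two distinct eigenvalues $\pm\lambda$ satisfying $\lambda^2 = -\det\Phi$. The corresponding eigenline bundles $L_\pm \subset E|_{X^\times}$ form a $\Phi$-invariant holomorphic decomposition, defined globally on the spectral double cover $\pi : \Sigma \to X$ branched at $\mathfrak{p}_\Phi$ and globally on $X^\times$ up to swapping by monodromy around each puncture. Any Hermitian metric on $E|_{X^\times}$ making $L_+$ and $L_-$ orthogonal automatically renders $\Phi$ normal, hence satisfies $[\Phi \wedge \Phi^*]=0$.

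\emph{Step 3 (Reduction to a Poisson equation).} Pick any smooth Hermitian metric $H_0$ on $E$ whose restriction to $X^\times$ orthogonalizes the $L_\pm$ (this is obtainable by fiberwise Gram--Schmidt outside the punctures followed by any smooth extension across them), and set $A := A(H_0, \bar\partial)$. Determinant-one gauge transformations preserving the spectral decomposition are of the form $g_\infty = \mathrm{diag}(s, s^{-1})$ in the local spectral frame. Imposing $F_{A^{g_\infty}}^\perp = 0$ reduces, via the curvature formula \eqref{cur.com.gt}, to a single real linear elliptic equation
\[
\Delta u = \rho, \qquad u := \log |s|^2,
\]
on $X^\times$, where $\rho$ is smooth and is determined by $F_A^\perp$ and the spectral data. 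The asymptotics $u \sim -\tfrac{1}{2}\log|z|$ at each puncture are prescribed by Step 1.

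\emph{Step 4 (Solvability and conclusion).} The Poisson equation on $X^\times$ with these logarithmic singularities at $\mathfrak{p}_\Phi$ is solvable iff a single real Chern--Weil identity, equating $\int \rho$ to the sum of the prescribed residues at $\mathfrak{p}_\Phi$, is satisfied. That identity reduces to a degree balance between (an appropriate extension of) $L_+$ on the spectral cover and $\det E$, and it holds automatically from the simple-zero hypothesis on $\det\Phi$ and the fixed-determinant condition. Standard elliptic theory then yields $u$ with the prescribed asymptotics, hence $s$ and $g_\infty$. A subsequent application of Lemma~\ref{norm.det} and Proposition~\ref{nor.form.fid} adjusts the local unitary frame near each puncture so that $(A_\infty, \Phi_\infty) := (A, \Phi)^{g_\infty}$ matches $(A_\infty^\fid, \Phi_\infty^\fid)$ on the nose, in accordance with Definition~\ref{lconfdef}.

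\emph{Main obstacle.} The key point is Step 4: solvability of the Poisson equation reduces to a single Chern--Weil residue identity. This is forced by topology (the fixed-determinant hypothesis combined with the $4(\gamma-1)$ simple zeroes of $\det\Phi$ in $K^2$), so no real analytic difficulty remains beyond checking that the local normal form of $\Phi$ at each puncture contributes the correct residue.
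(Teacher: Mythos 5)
Your overall strategy -- exploit the abelian stabilizer of $\Phi$ to reduce $F^\perp=0$ to a Poisson-type problem, with the singular model of \cref{gau.orb.fid} prescribing the behaviour at the zeroes -- is the same as the paper's, but two of your steps have genuine gaps. First, the metric $H_0$ of your Step 3 does not exist: at a simple zero $p$ the matrix $\varphi(p)$ is nilpotent and nonzero, so the eigenlines $L_\pm(z)$ collapse onto the single eigenline of $\varphi(p)$ as $z\to p$; for any metric that is continuous and nondegenerate across $p$, the angle between $L_+(z)$ and $L_-(z)$ then tends to $0$, so no smooth metric on $E$ can orthogonalize $L_\pm$ on a punctured neighbourhood of $p$ (``Gram--Schmidt outside the punctures'' yields a metric that does not extend across them). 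Equivalently, a $g_\infty$ that is diagonal in the spectral frame commutes with $\Phi$, so $\Phi_\infty=\Phi$ would have to be normal with respect to $H_0$ itself, which fails near $\mf p_\Phi$ for every smooth $H_0$. This is exactly why the paper keeps $H_0$ smooth (unitary holomorphic frame near each zero, \cref{nor.for.zer}) and inserts a genuinely non-diagonal normalizing gauge transformation, whose global existence over $X^\times$ is a separate topological step (\cref{ext.lem}, using that the normalizer fibres are connected and simply connected while $X^\times$ retracts onto a bouquet of circles); your proposal has no substitute for this step.

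Second, your reduction in Steps 3--4 to a scalar equation $\Delta u=\rho$ governed by a single Chern--Weil identity ignores the monodromy you yourself note in Step 2: around each $p_i$ the eigenvalues $\pm\sqrt z$, hence $L_\pm$, are exchanged, so $s\mapsto s^{-1}$ and $u=\log|s|^2$ changes sign. Thus $u$ is a section of a real line bundle twisted at every puncture, not a function on $X^\times$, and the asymptotic $u\sim-\tfrac12\log|z|$ (which pertains to the holomorphic frame of \cref{gau.orb.fid}, not the spectral frame) is not even meaningful for such a section. The correct formulation is the one in \cref{equ.s.gauge}: solve $\Delta_A\gamma=i\ast F_A^\perp$ for $\gamma\in\Omega^0(iL_\Phi)$, and solvability comes not from an integral (residue/degree) balance but from invertibility of $\Delta_A$ on weighted $b$-Sobolev spaces (\cref{sol.poiss.equ}), which holds precisely because the twisted bundle $iL_\Phi$ has no parallel sections, together with the indicial-root computation of \cref{calc.ind.roots} giving the $r^{1/2}$ decay needed to match the fiducial model after the final unitary adjustment of \cref{nor.form.fid}. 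So the ``main obstacle'' you single out (a Chern--Weil balance, asserted to hold automatically and left unverified) is not where the difficulty lies; the real work is the twisted linear analysis at the punctures, which a scalar Poisson formulation cannot capture.
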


\begin{remark}
As we will see below (\cref{sec:glue.const}, \cref{thm:nonlinearbvaluedisk}), every limiting configuration arises in this way.
\end{remark}

There are several steps in the proof. In the next subsection we describe a certain normal form for any simple Higgs field 
$\Phi$ on all of $X$. We then consider the problem of using some of the remaining gauge freedom (i.e., only those
gauge transformations which leave $\Phi$ in this normal form) to transform an initial connection to one with
vanishing trace-free curvature. This requires a brief foray into the theory of conic operators. After these steps
we are left with a limiting configuration in the sense of \cref{lconfdef}.  The final subsection considers the
local deformation theory of the space of limiting configurations. 
%
\subsection{Normal form for the Higgs field}
Fix the holomorphic bundle $(E,\delb)$ and let $\Phi$ be a simple Higgs field. We now show that $\Phi$ can be
brought to a simple normal form by a complex gauge transformation.  More specifically, we can smoothly 
``off-diagonalize'' $\Phi$ near each of its zeroes, and make it normal away from these zeroes.  Later in this section, 
we construct from $(E, \delb)$ (and an accompanying connection) a limiting configuration on all of $X$. 
Using Proposition~\ref{gau.orb.fid}, we can then patch in a smooth fiducial solution near each of the zeroes.
The resulting pairs $(A,\Phi)$ are then the first approximation to global solutions of Hitchin's equations.

The transformation of $\Phi$ near a simple zero to this normal form is elementary. 

\begin{lemma}\label{nor.for.zer}
In a neighbourhood of any simple zero of $\det\Phi$, there is a complex coordinate $z$ and a local complex frame 
of $E$ such that
\[
\Phi=\begin{pmatrix}0&1\\z&0\end{pmatrix}dz,\quad\det\Phi=-z\,dz^2.
\]
The frame can be chosen to be holomorphic if $\Phi$ is holomorphic.
\end{lemma}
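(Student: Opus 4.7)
The plan is to reduce the statement to an explicit two-step gauge transformation after first normalizing both the coordinate and the frame at the base point $p$. Begin with the coordinate: writing $\det\Phi=\omega(z)\,dz^2$ in any holomorphic chart centered at $p$, with $\omega(0)=0$ and $\omega'(0)\neq 0$, I want a new coordinate $\tilde z=\psi(z)$ for which $\det\Phi=-\tilde z\,d\tilde z^2$, i.e.\ $\psi(\psi')^2=-\omega$. Setting $u=\psi^{3/2}$ converts this ODE into $u'=\pm\tfrac32\sqrt{-\omega}$, and since $-\omega$ vanishes to exactly first order, $u(z)=\int_0^z\sqrt{-\omega(w)}\,dw$ extracts a single factor of $w^{1/2}$ and yields $u(z)=z^{3/2}h(z)$ with $h$ holomorphic and $h(0)\neq 0$. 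Hence $\psi(z)=u(z)^{2/3}=z\,h(z)^{2/3}$ is a genuine local biholomorphism, and in this coordinate $\Phi=\varphi(z)\,dz$ with $\det\varphi(z)=-z$.

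Next I would normalize the frame at $p$. Since $\varphi$ is trace-free and $\det\varphi(0)=0$, the matrix $\varphi(0)$ is nilpotent; it cannot be zero, for otherwise $\det\varphi$ would vanish to order at least two, contradicting $\det\varphi=-z$. Hence $\varphi(0)$ is rank-one nilpotent, and a constant change of basis of $E_p$ (extended to a constant frame change in a neighbourhood, holomorphic if we start from a holomorphic frame) brings $\varphi(0)$ to $\bigl(\begin{smallmatrix}0&1\\0&0\end{smallmatrix}\bigr)$. In this frame
\[
\varphi(z)=\begin{pmatrix} a & 1+d \\ c & -a \end{pmatrix},\qquad a(0)=c(0)=d(0)=0,
\]
and the determinant identity reads $c(1+d)=z-a^2$.

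The final step is the explicit gauge transformation. Set
\[
g_1=\begin{pmatrix} 1 & 0 \\ -a/(1+d) & 1 \end{pmatrix},\qquad g_2=\begin{pmatrix} (1+d)^{1/2} & 0 \\ 0 & (1+d)^{-1/2} \end{pmatrix},
\]
where the square root uses the principal branch about $1$. A direct computation, in which the identity $c(1+d)=z-a^2$ is used to simplify the lower-left entry, gives
\[
g_1^{-1}\varphi g_1=\begin{pmatrix} 0 & 1+d \\ z/(1+d) & 0 \end{pmatrix}\quad\text{and}\quad (g_1g_2)^{-1}\varphi(g_1g_2)=\begin{pmatrix} 0 & 1 \\ z & 0 \end{pmatrix},
\]
which is the asserted normal form.

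I do not anticipate a serious obstacle: the entries of $g_1$ and $g_2$ are rational, respectively fractional-power, expressions in $a$ and $d$, so the resulting gauge inherits the regularity of $\Phi$, smooth if $\Phi$ is smooth and holomorphic if $\Phi$ is holomorphic. The only delicate point worth monitoring is that both the coordinate change and the square root in $g_2$ are single-valued and holomorphic near the origin, which is guaranteed by the $3/2$-power substitution in the coordinate step and by $1+d$ being close to $1$ in the final gauge.
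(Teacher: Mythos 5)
Your proof is correct and is essentially the paper's argument: your composite gauge $g_1g_2=(1+d)^{-1/2}\bigl(\begin{smallmatrix}1+d&0\\-a&1\end{smallmatrix}\bigr)$ is exactly the single unimodular transformation used in the paper's proof (there written $b^{-1/2}\bigl(\begin{smallmatrix}b&0\\-a&1\end{smallmatrix}\bigr)$ with $b=1+d$), and your coordinate step is the ``standard local computation'' the paper invokes to arrange $\det\Phi=-z\,dz^2$. The only correction needed is the constant you yourself derived: take $u=\tfrac32\int_0^z\sqrt{-\omega(w)}\,dw$ rather than $\int_0^z\sqrt{-\omega(w)}\,dw$, since otherwise $\psi=u^{2/3}$ yields $\det\Phi=-\tfrac94\,\tilde z\,d\tilde z^2$ instead of $-\tilde z\,d\tilde z^2$.
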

\begin{proof}
Choose any complex frame for $E$ near some $p\in\mf p_\Phi$. Writing $\Phi = \varphi\, dz$ as usual, then since $p$ is a simple 
zero, $\varphi(0)$ must be nilpotent, 
but not the zero matrix (for if it were, then $\det \varphi$ would vanish like $z^2$). Applying a constant gauge transformation, 
we may thus assume that in some frame, 
\[
\varphi(z)=\begin{pmatrix}a(z) & b(z)\\c(z) &-a(z) \end{pmatrix}, \quad \mbox{with}\quad \varphi(0) = 
\begin{pmatrix} 0 & 1 \\ 0 & 0 \end{pmatrix}. 
\]
Since $\sqrt{b(z)}$ is well-defined and smooth near $0$, we can define the complex unimodular gauge transformation
\[
g(z)=\frac{1}{\sqrt{b(z)}}\begin{pmatrix}b(z) &0\\-a(z) &1\end{pmatrix},
\]
and then it is straightforward to check that $g^{-1} \Phi g$ takes the form in the statement of this lemma. 
\end{proof}

\begin{remark}
If the Higgs bundle $(\delb,\Phi)$ is described using spectral curves as in~\cite[Section 8]{hi87}, then~\cref{nor.for.zer} 
is also a direct consequence of the pushforward-pullback formula for vector bundles (see for instance~\cite[Chapter 2, Proposition 4.2]{hi99}).
\end{remark}

Before modifying $\Phi$ with a gauge transformation on the rest of the surface, let us choose a Hermitian metric $H_0$
which is particularly well adapted to this $\Phi$.  The important part of this definition is local near each zero $p_i\in\mf p_\Phi$. 
Thus choose a coordinate disc $(U_i,z_i)$ centered at $p_i$ and a holomorphic frame so that
$\Phi|_{U_i}$ equals the expression in \cref{nor.for.zer}. Define $H_0$ in $U_i$ by declaring this frame to be unitary. Now extend $H_0$ arbitrarily on the remaining part of $X$.
Associated to $H_0$ is its Chern connection $A$. The existence of a unitary holomorphic frame near each puncture implies that 
the connection matrix of $A$ in this frame vanishes. Finally, using \cref{gau.orb.fid} we can choose a complex gauge transformation 
$g\in\Gamma(\bigcup U_i^\times,\SL(E))$ such that $(A,\Phi)^g$ agrees with the fiducial solution. 

We now wish to extend this $g$ to the rest of $X$ so that $\Phi^g$ is normal outside the $U_i$. To motivate this, recall first that any invertible matrix $\varphi\in\mf{sl}(2,\C)$  may be conjugated (at a point) to be trace-free 
and diagonal. However, this diagonalization is impossible to do consistently on $X^\times$ because the eigenspaces are interchanged 
when traversing a loop surrounding any one of the $p_i$. We settle instead on the less ambitious goal of conjugating 
it to a normal matrix.

Define the subsets $D_\varphi$ and $N_\varphi$ of elements in $\SL(2,\C)$ which diagonalize and normalize $\varphi$,
respectively, at any point.  Fixing a basepoint $g_\varphi \in D_\varphi$, then 
$$
D_\varphi=\left\{g_\varphi\begin{pmatrix}\mu&0\\0&\mu^{-1}\end{pmatrix}\mid\mu\in\C^*\right\}\cup 
\left\{g_\varphi\begin{pmatrix}0&i\mu \\i\mu^{-1}&0\end{pmatrix}\mid\mu\in\C^*\right\} 
$$
and
$$
N_\varphi=D_\varphi\cdot\SU(2)=\left\{g_\varphi\begin{pmatrix}\mu&0\\0&\mu^{-1}\end{pmatrix}M\mid\mu\in\C^*,\,M\in\SU(2)\right\}.
$$

Because we have chosen the Hermitian metric $H_0$, we can speak about Hermitian adjoints and normal endomorphisms. Since 
any complex vector bundle is trivial over $X^\times$, we can write $\Phi=\varphi\otimes\kappa$ on this punctured surface, 
where $\kappa$ is a trivialization of $K$ over $X^\times$ and $\varphi \in \calC^\infty(X^\times;\mf{sl}(2,\C))$. 
There is a smooth fibration $\mc N_\varphi \to X^\times$, where each fibre $N_{\varphi(x)}$ is diffeomorphic to $\mc N:=N_{\Id}$.
If $g:U\to\SL(2,\C)$ diagonalizes $\varphi$ over $U$, then $\hat g(x,N)=g(x)N$
is a local trivialization of $\mc N_\varphi$ over $U$. Since the complex square root is well-defined over simply-connected sets, 
such a section $g$ always exists locally. However, the fibres $\mc N$ are homotopy-equivalent to $\SU(2)\cong S^3$,
while $X^\times$ retracts onto a bouquet of circles. There are thus no obstructions to extending sections. This proves the 

\begin{lemma}\label{ext.lem}
Any normalizing local section $g:U\to\mc N_\varphi$ on an open set $U \subset X^\times$ extends to a global section 
$X^\times\to\mc N_\varphi$. In particular, there exists a complex frame of $E|_{X^\times}$ with respect to 
which $\Phi$ is a normal matrix.
\end{lemma}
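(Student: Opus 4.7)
The plan is to reduce the lemma to a standard obstruction-theoretic argument, leveraging the homotopy data already assembled in the paragraph preceding the statement. Two facts drive the argument: first, the fiber $\mc N$ is homotopy equivalent to $\SU(2) \cong S^3$, so $\pi_k(\mc N) = 0$ for $k = 0, 1, 2$; second, $X^\times$ is obtained by deleting $4(\gamma-1)$ points from the closed surface $X$, so it deformation retracts onto a finite wedge of circles $K$, a one-dimensional CW complex.

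Given the local section $g : U \to \mc N_\varphi$, I would first shrink $U$ slightly and use the homotopy lifting property of the smooth fiber bundle $\mc N_\varphi \to X^\times$ to pull the problem back along the deformation retraction $X^\times \to K$, reducing matters to extending a section on a subcomplex $A \subset K$ to all of $K$. Standard obstruction theory then constructs the extension cell by cell over $K$: extending across a $0$-cell requires only that the fiber be nonempty, and extending across a $1$-cell requires $\pi_0(\mc N) = 0$, i.e.\ connectedness of the fiber. Since $K$ has no cells of dimension $\geq 2$, no further obstructions arise. Transporting the section back along the homotopy inverse of the retraction yields a continuous section on all of $X^\times$, and a standard smoothing argument (available because $\mc N_\varphi$ is a smooth bundle with smooth manifold fibers) upgrades this to a smooth section.

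The main technical point I expect to require care is arranging that the extended section actually coincides with $g$ on $U$ rather than merely agreeing with it up to homotopy. This can be handled by writing $U = U_0 \cup U_1$ with $\overline{U_0}$ a compact subset of $U$, running the obstruction argument on the complement of $U_0$ to obtain an extension, and then interpolating between the two sections on the overlap $U \setminus \overline{U_0}$ by means of a smooth cutoff function, using that the fiber is locally contractible and simply connected so that any two nearby sections can be joined by a smooth homotopy taking values in the fiber.

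For the ``in particular'' statement, a global section $s : X^\times \to \mc N_\varphi$ is by construction a smooth assignment $x \mapsto g(x) \in \SL(2,\C)$, expressed in any local trivialization of $E|_{X^\times}$, such that $g(x)^{-1} \varphi(x) g(x)$ is a normal element of $\mf{sl}(2,\C)$ pointwise. Viewing $g$ as a complex gauge transformation on $E|_{X^\times}$ then produces a global complex frame with respect to which $\Phi$ is represented by a normal matrix, as claimed.
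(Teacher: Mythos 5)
Your proposal is correct and follows essentially the same route as the paper: the paper's argument is exactly the obstruction-theoretic one, using that the fibre $\mc N$ is homotopy equivalent to $\SU(2)\cong S^3$ (hence connected and simply connected) while $X^\times$ retracts onto a bouquet of circles, so there are no obstructions to extending the local section. The extra care you take about smoothing and about making the global section literally restrict to $g$ on (a slightly shrunk) $U$ is a reasonable elaboration of what the paper leaves implicit.
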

%
\subsection{Gauging away the trace-free part of the curvature}\label{spe.f.gau}
We can at last start the proof of~\cref{ex.out.sol}, and do so with a general observation. Given a Higgs pair $(A,\Phi)$ where $\Phi$ is simple,
~\cref{ext.lem} produces a field gauge-equivalent to $\Phi$ which is normal on $X^\times$, so we now 
assume that $\Phi$ is normal. This normalizing complex gauge transformation is not unique, however; 
we shall show how to use the remaining gauge freedom to transform $A$ to a projectively flat unitary 
connection, i.e., one for which $F_A^\perp = 0$. 

Recall now from \cref{lim.fid.con} that the infinitesimal complex stabilizer of $\Phi$ is a holomorphic line bundle $L_\Phi^\C=
\{\gamma\in\mf{sl}(E)\mid[\gamma,\Phi]=0\}$. Thus $L_\Phi:=L_\Phi^\C\cap\mf{su}(E)$ and $iL_\Phi$ are the skew-Hermitian 
and Hermitian elements. These are real line bundles over $X^\times$.  
The Jacobi identity shows that $L_\Phi$ is closed under the bracket $[\cdot\,,\cdot]=0$.

\begin{lemma}\label{cur.inf.sta}
If $\Phi$ is normal, and if $A$ is a unitary connection such that $\bar\partial_A\Phi=0$, then $F_A^\perp\in\Omega^2(L_\Phi)$.
\end{lemma}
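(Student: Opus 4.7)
The claim is pointwise, so I would fix an arbitrary point of $X^\times$ and argue on a simply-connected neighborhood $U$. Since $\Phi$ is normal with $\det\Phi\neq 0$ on $X^\times$, its eigenvalues $\pm\lambda$ are distinct there, and so a unitary frame of $E|_U$ and a holomorphic coordinate $z$ can be chosen smoothly so that
\[
\Phi|_U=\lambda(z,\bar z)\,a_1\,dz,\qquad a_1=\begin{pmatrix}1&0\\0&-1\end{pmatrix},
\]
with $\lambda$ nowhere vanishing on $U$. I would then decompose $A=\tilde A+\tfrac{i}{2}\zeta\,\Id$ into its $\mf{su}(E)$-part $\tilde A$ and its central part $\tfrac{i}{2}\zeta\,\Id$ (with $\zeta$ a real $1$-form). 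Only $\tilde A$ enters $\bar\partial_A\Phi$ since $\Id$ commutes with everything.

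The main step is to expand $\bar\partial_A\Phi=0$ in the basis $\{a_1,e_+,e_-\}$ of $\mf{sl}(2,\C)$, where $e_+$ and $e_-$ denote the strictly upper and lower triangular nilpotent matrices. Writing $\tilde A^{0,1}=(p\,a_1+q\,e_++r\,e_-)\,d\bar z$ and using $[a_1,e_\pm]=\pm 2e_\pm$, the $a_1$-component of the resulting $(1,1)$-form yields $\partial_{\bar z}\lambda=0$ (so $\lambda$ is holomorphic) while the $e_\pm$-components (together with $\lambda\neq 0$) force $q=r=0$. Hence $\tilde A^{0,1}$ is diagonal, and unitarity gives $\tilde A^{1,0}=-(\tilde A^{0,1})^*$, which is diagonal as well. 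Thus $\tilde A$ takes values pointwise in the real line $\R\cdot ia_1$, which is precisely the fibre of $L_\Phi$ over $U$.

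Since $L_\Phi^\C=\C\cdot a_1$ is abelian, $[\tilde A\wedge\tilde A]$ vanishes identically, so $F_{\tilde A}=d\tilde A$ is again $L_\Phi$-valued. The central summand contributes only a trace, $F_A=F_{\tilde A}+\tfrac{i}{2}d\zeta\,\Id$, so taking the trace-free part yields $F_A^\perp=d\tilde A\in\Omega^2(L_\Phi|_U)$; since $U$ was arbitrary this proves the lemma. The main piece of care is the bookkeeping around the trace-free/central decomposition of $A$, needed to identify $F_A^\perp$ correctly; smoothness of the diagonalizing unitary frame on simply-connected pieces of $X^\times$ is automatic from $\det\Phi\neq 0$, and no analysis beyond the bracket identity is required.
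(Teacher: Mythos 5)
Your proof is correct and follows essentially the same route as the paper's: work locally in a unitary eigenframe for $\Phi$, use $\bar\partial_A\Phi=0$ together with $\lambda\neq 0$ to kill the off-diagonal part of $A^{0,1}$, and then observe that the resulting diagonal connection has curvature whose trace-free part is valued in $L_\Phi$. The only difference is cosmetic bookkeeping (splitting off the central part of $A$ and using the basis $\{a_1,e_+,e_-\}$ rather than matrix entries), which the paper handles implicitly by writing $\alpha=\alpha^{0,1}-(\alpha^{0,1})^*$ and taking the trace-free part at the end.
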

\begin{proof}
This is a purely local statement. Choose a unitary eigenframe for $\Phi$, so in some local complex coordinate $z$, 
\[
\Phi=\left(\begin{array}{rr}\lambda&0\\0&-\lambda\end{array}\right)dz.
\]
The connection form $\alpha=\alpha^{0,1}-(\alpha^{0,1})^*$ is determined by its $(0,1)$-part 
\ben
\alpha^{0,1}=\left(\begin{array}{rr}a & b \\ c & d\end{array}\right)d\bar z.
\ee
Now 
\begin{multline*} 
\delb_A \Phi  =  \left(\left(\begin{array}{rr}\del_{\bar z}\lambda & 0 \\ 0 & -\del_{\bar z}\lambda\end{array}\right)+
\left[\left(\begin{array}{rr} a & b \\ c & d\end{array}\right),\left(\begin{array}{rr}\lambda & 0 
\\ 0 & -\lambda\end{array}\right)\right] \right)d\bar z\wedge dz  \\
 =  \left(\begin{array}{rr}\del_{\bar z}\lambda & -2b\lambda\\2c\lambda & -\del_{\bar z}\lambda\end{array}\right)d\bar z\wedge dz = 0
\end{multline*} 
implies $b=c=0$, so 
\be\label{con.mat.phi}
\alpha^{0,1}=\left(\begin{array}{rr}a&0\\0&d\end{array}\right)d\bar z.
\ee
In particular, $[\alpha\wedge\alpha]=0$, so $F_A=d\alpha$ and hence 
\[
F_A^\perp=\begin{pmatrix} \Re \partial_z (a-d) & 0 \\ 0 & \Re \partial_z(d-a)\end{pmatrix} dz\wedge d\bar z,
\]
as claimed.
\end{proof}

The bundles $L_\Phi$ and $iL_\Phi$ are parallel with respect to the induced unitary connection on $\mf{gl}(E)$. 
Indeed, $d_A\Phi=0$ (the $(1,0)$ part of the derivative automatically vanishes in this dimension), so 
$[d_A\gamma\wedge\Phi]=d_A[\gamma,\Phi]=0$. In particular, the connection Laplacian
$$
\Delta_A:=d_A^*d_A:\Omega^0(i\mf{su}(E))\to\Omega^0(i\mf{su}(E))
$$ 
restricts to a map $\Omega^0(iL_\Phi)\to\Omega^0(iL_\Phi)$.

\begin{proposition}\label{equ.s.gauge}
If $A$ is a unitary connection and $\gamma\in\Omega^0(iL_\Phi)$, then $F_{A^{\exp(\gamma)}}^\perp=0$ if and only if 
$\gamma$ is a solution to the Poisson equation
\be\label{poi.equ.gam}
\Delta_A\gamma=i\ast F_A^\perp.
\ee
\end{proposition}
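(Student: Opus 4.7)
The strategy is to apply the transformation formula~\eqref{cur.com.gt} for curvature under a complex gauge transformation, exploit the fact that $L_\Phi^\C$ is an abelian line bundle which is $d_A$-parallel inside $\mf{sl}(E)$, and then recognise the resulting linear equation as the stated Poisson equation via the Kähler-type identity $\Delta_A = 2i\ast\delb_A\partial_A$ on the Riemann surface. The key structural point is that the full nonlinear curvature transformation collapses to a linear equation; this is forced by both the commutativity inside $L_\Phi^\C$ and its $d_A$-parallelism, and is the main step to be checked.

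Since $\gamma\in\Omega^0(iL_\Phi)$ is Hermitian, $g^*=g$, so $G := gg^* = \exp(2\gamma)$. Because $L_\Phi^\C$ is $d_A$-parallel (as established just before the proposition), $\partial_A\gamma$ remains a section of $L_\Phi^\C$ and therefore commutes with $\gamma$. A direct series expansion using this commutativity gives $\partial_A G^{-1} = -2(\partial_A\gamma)\,G^{-1}$, so
\[
G\cdot\partial_A G^{-1} = -2\,\partial_A\gamma.
\]
Plugging into~\eqref{cur.com.gt} yields
\[
F_{A^g}=g^{-1}\bigl(F_A - 2\,\delb_A\partial_A\gamma\bigr)g.
\]

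Next, I argue that the outer conjugation by $g$ acts trivially on the relevant quantities. By~\cref{cur.inf.sta}, $F_A^\perp \in \Omega^2(L_\Phi)$, and the same parallelism argument gives $\delb_A\partial_A\gamma \in \Omega^2(L_\Phi^\C)$. Since $L_\Phi^\C$ is abelian and contains $\gamma$, both quantities commute with $g = \exp(\gamma)$; the central trace part of $F_A$ commutes with $g$ automatically. Taking trace-free parts,
\[
F_{A^g}^\perp = F_A^\perp - 2\,\delb_A\partial_A\gamma,
\]
so the equation $F_{A^g}^\perp = 0$ is equivalent to the linear equation $F_A^\perp = 2\,\delb_A\partial_A\gamma$.

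Finally, apply $i\ast$ to both sides. The identity $\Delta_A\gamma = 2i\ast\delb_A\partial_A\gamma$ on sections of a Hermitian bundle over a Riemann surface follows from the Kähler identities $\partial_A^* = -i\Lambda\,\delb_A$ and $\Delta_A = 2\partial_A^*\partial_A$ combined with $\Lambda = \ast$ on $2$-forms; alternatively, a one-line check in an isothermal coordinate shows that both sides reduce to $-4\gamma_{z\bar z}$ (plus connection terms that cancel identically because $L_\Phi^\C$ is parallel). Combined with the equivalence above, this yields $\Delta_A\gamma = i\ast F_A^\perp$, which is the desired Poisson equation.
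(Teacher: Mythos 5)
Your proof is correct and is essentially the paper's own argument: you start from the transformation formula~\eqref{cur.com.gt} with $G=gg^*=\exp(2\gamma)$, use the parallelism and commutativity of the line bundle $L_\Phi^\C$ to collapse $G\,\partial_AG^{-1}$ to $-2\partial_A\gamma$, and finish by converting $2\delb_A\partial_A\gamma$ into $\Delta_A\gamma$ via a K\"ahler-type identity (the paper simply observes that conjugation by $g$ does not affect the vanishing, rather than arguing it acts trivially, but that difference is immaterial). The one point to state more carefully is the last step: $\Delta_A=2\partial_A^*\partial_A$ is \emph{not} a general identity for bundle-valued sections --- the correct Weitzenb\"ock/K\"ahler identity carries the curvature correction $2i\Lambda[F_A,\gamma]$, which is exactly how the paper phrases it (citing Nicolaescu) before killing that term using \cref{cur.inf.sta}. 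Your shortcut is legitimate here only because $[F_A,\gamma]=[F_A^\perp,\gamma]=0$, which follows from $F_A^\perp\in\Omega^2(L_\Phi)$ and the commutativity of sections of the line bundle (facts you already invoke earlier in your proof); note that ``parallelism of $L_\Phi^\C$'' by itself only places this curvature term in $\Omega^2(L_\Phi^\C)$, so you should make the vanishing explicit rather than attribute it to parallelism alone.
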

\begin{proof}
By \cref{cur.com.gt}, if $g\in \Gamma(\SL(E))$, then 
$$
F_{A^g}^\perp = g^{-1}(F_A^\perp+\bar\partial_A(gg^*\partial_A(gg^*)^{-1})) g.
$$
Since $\gamma$ is Hermitian, $g=\exp(\gamma)=g^*$, and so $A^g$ is projectively flat provided that
\be\label{cur.gam.sta}
F_A^\perp+\delb_A\big(\exp(2\gamma)\partial_A\exp(-2\gamma)\big)=0.
\ee
Computing in a local unitary eigenframe for $\Phi$ then gives 
\begin{multline*}
\partial_A\exp(-2\gamma)=-2\exp(-2\gamma)\partial_A\gamma \\
\Longrightarrow 
\delb_A\big(\exp(2\gamma)\partial_A\exp(-2\gamma)\big)=-2\delb_A\partial_A\gamma.
\end{multline*}
Denote by $\Lambda$ the contraction with the K\"ahler form $\omega$ of $X$. Then, by~\cite[Prop.\ 1.4.21-22]{ni00},
\[
2i\Lambda\delb_A\partial_A\gamma=\Delta_A\gamma-2i\Lambda [F_A,\gamma]. 
\]
We use here the fact, which is straightforward to verify, that the induced connection $\End(A)$ on $\End(E)$
has curvature satisfying $F_{\End(A)}\gamma=[F_A,\gamma]$. However, by~\cref{cur.inf.sta}, $\Lambda [F_A,\gamma]= 
\ast [F_A,\gamma] = \ast [ F_A^\perp, \gamma] =0$, so~\eqref{cur.gam.sta} becomes $\Delta_A\gamma=i\ast F_A^\perp$.
\end{proof}
%
\subsection{Indicial roots}\label{sec:ind.roots}
At this point we have produced a Hermitian metric $H_0$ and a complex gauge transformation $g_0\in\Gamma(X^\times,
\SL(E))$ such that $(A,\Phi)^{g_0}$ consists of a normal Higgs field $\Phi^{g_0}$ and in an appropriate unitary
frame, $(A,\Phi)^{g_0}$ is fiducial near each $p_i\in\mf p_\Phi$. 

To simplify notation, let us replace $(A,\Phi)^{g_0}$ by $(A,\Phi)$ until further notice (near the end of this subsection). 
Because of the simple pole of $A$, the Poisson equation~\eqref{poi.equ.gam} is an example of an elliptic conic
operator, and we shall appeal to the theory of these operators to describe how to solve it. We refer to~\cite{mamo11} 
and the references therein for more on this theory. 
To be explicit, introduce polar coordinates in each punctured disk $U^\times$, and fix a trivialization of $iL_\Phi$ there
to identify sections with functions $\gamma:U^\times\to i\mf{su}(2)$. 
There is unitary frame in $U^\times$ so that 
\be\label{a.spe.fra}
A=\alpha d\theta=\frac{1}{4}\begin{pmatrix}i&0\\0&-i\end{pmatrix}d\theta.
\ee
The associated connection Laplacian is
$$
\Delta_A=\nabla^*_A\nabla_A=-\frac{1}{r^2}\left(\nabla_{r \partial_r}^2+\nabla_{\partial_\theta}^2\right).
$$
In the frame of~\eqref{a.spe.fra}, $\nabla_{r\partial_r}=r\partial_r$ and $\nabla_{\partial_\theta}=\partial_\theta+\alpha$, hence
$$
\Delta_A\gamma=-\frac{1}{r^2}\left((r\partial_r)^2\gamma+\partial_\theta^2\gamma+2[\alpha, \partial_\theta\gamma]+[\alpha,[\alpha,\gamma]]\right)=-(\partial_r^2+\frac{1}{r}\partial_r+\frac{1}{r^2}T)\gamma
$$
where $T$ is the $r$-independent {\em tangential operator}, acting on sections of the
restriction of $\mf{su}(E)$ over the $S^1$ link. 
The coefficients of $\Delta_A$ are smooth away from $\mf p_\Phi$, and are {\em polyhomogeneous} at these points. In other words,
near each such point, any coefficient $a$ has a complete asymptotic expansion 
\be\label{pol.hom.exp} 
a\sim\sum_{j}\sum_{k=0}^{N_j}r^{\nu_j}(\log r)^ka_{j,k}(\theta),
\ee
with a corresponding expansion for each of its derivatives. We encode the exponents which appear in this expansion as
an index set $\{\nu_j,N_j\}\subset\C\times\N$, which has the property that $\mbox{Re}\, \nu_j \to \infty$ as $j \to \infty$. 

\begin{definition}
A number $\nu\in\C$ is called an {\em indicial root} for $\Delta_A$ if there exists some $\zeta=\zeta(\theta)$ such 
that $\Delta_A (r^\nu\zeta(\theta)) = \calO(r^{\nu-1})$ (rather than the expected rate $\calO(r^{\nu-2})$). We let $\Gamma(\Delta_A)$ 
denote the set of indicial roots of $\Delta_A$. 
\end{definition}

Thus $\nu$ is an indicial root provided there is some leading order cancellation. It is not hard to see that $\nu \in \Gamma(\Delta_A)$ 
if and only if $-\nu^2$ is an eigenvalue for the tangential operator of $\Delta_A$ and $\zeta$ is the corresponding
eigenfunction, i.e., $(\nabla_{\partial_\theta}^2+\nu^2)\zeta(\theta)=0$.  \cref{con.op.main} below indicates the importance 
of this notion. Before turning to this, however, we compute the indicial roots for the connection Laplacian. 

\begin{lemma}\label{calc.ind.roots}
The set of indicial roots of $\Delta_A$ on sections of $i\mf{su}(E)$ is $\Gamma(\Delta_A)= \frac{1}{2}\Z$. 
On the other hand, $\Gamma(\Delta_{A}|_{iL_\Phi})=\frac{1}{2}+\Z$.
\end{lemma}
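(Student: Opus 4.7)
The plan is to compute the indicial roots by finding the eigenvalues of the tangential operator $T$ explicitly, since by the definition given, $\nu\in\Gamma(\Delta_A)$ iff $-\nu^2$ is an eigenvalue of $T=(\partial_\theta+\ad\alpha)^2$ (with $\alpha=\tfrac{i}{4}\sigma_3$ in the fiducial frame) acting on smooth sections of $i\mf{su}(E)|_{S^1}$. So the whole question reduces to diagonalizing $\ad\alpha$ on $i\mf{su}(2)$ and doing Fourier analysis on each eigenspace, respecting the appropriate periodicity condition.

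First I would pass to the complexified Lie algebra and use the Pauli basis, writing any section $\zeta(\theta)$ as $\zeta_0(\theta)\sigma_3+\zeta_+(\theta)e_++\zeta_-(\theta)e_-$ where $e_\pm=\sigma_1\pm i\sigma_2$. On $\sigma_3$, $\ad\alpha$ acts by $0$, and on $e_\pm$ it acts by $\pm i/2$. The eigenvalue equation $(\partial_\theta+\ad\alpha)^2\zeta=-\nu^2\zeta$ then decouples: on $\sigma_3$ we get $\zeta_0''=-\nu^2\zeta_0$, and on $e_\pm$ we get $(\partial_\theta\pm\tfrac{i}{2})^2\zeta_\pm=-\nu^2\zeta_\pm$. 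Requiring the sections to be $2\pi$-periodic (since $\sigma_3$ and $e_\pm$ are all single-valued in the fiducial frame), a Fourier mode $e^{im\theta}$, $m\in\Z$, in $\zeta_0$ gives $\nu=\pm m\in\Z$, while a mode in $\zeta_\pm$ gives $\nu=\pm(m\pm\tfrac12)\in\tfrac12+\Z$. The union yields $\Gamma(\Delta_A)=\tfrac12\Z$.

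For the restriction to $iL_\Phi$, I would use that $L_\Phi$ is a real line sub-bundle of $\mf{su}(E)$ parallel with respect to $A$ (as noted in the text, since $d_A\Phi=0$). From the earlier analysis of the complex stabilizer (\cref{gamma.mu} together with the Hermiticity condition), a local frame for $iL_\Phi$ in the fiducial trivialization is furnished by
\[
e(\theta)=\cos(\theta/2)\,\sigma_1+\sin(\theta/2)\,\sigma_2,
\]
and a direct computation confirms $(\partial_\theta+\ad\alpha)\,e(\theta)=0$, consistent with parallelism. Thus for $\gamma=\tilde\mu(\theta)\,e(\theta)$ one has $T\gamma=\tilde\mu''(\theta)\,e(\theta)$, and the eigenvalue equation reduces to $\tilde\mu''=-\nu^2\tilde\mu$.

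The subtle step—and the only real obstacle—is the global single-valuedness issue. The section $e(\theta)$ is anti-periodic, $e(\theta+2\pi)=-e(\theta)$, reflecting the fact, already noted in the text, that $iL_\Phi\to X^\times$ is a nontrivial real line bundle whose unit circle bundle has monodromy $-1$ around each puncture. For $\gamma$ to be a well-defined smooth section, $\tilde\mu$ must therefore be anti-periodic, so its admissible Fourier modes are $e^{i(k+1/2)\theta}$ with $k\in\Z$. Plugging into $\tilde\mu''=-\nu^2\tilde\mu$ gives $\nu=\pm(k+\tfrac12)$, hence $\Gamma(\Delta_A|_{iL_\Phi})=\tfrac12+\Z$, matching precisely the half-integer sub-lattice of $\Gamma(\Delta_A)$ contributed by the $e_\pm$ eigenspaces in the previous step.
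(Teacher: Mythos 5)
Your proposal is correct and follows essentially the same route as the paper: reduce to the tangential operator in the fiducial frame, do Fourier analysis in $\theta$ with the periodicity dictated by the frame, and for the restriction to $iL_\Phi$ use the anti-periodic parallel frame $\cos(\theta/2)\sigma_1+\sin(\theta/2)\sigma_2$ (the paper's $\sin(\theta/2)i\tau_2+\cos(\theta/2)i\tau_3$ up to sign) to reduce to $\tilde\mu''=-\nu^2\tilde\mu$ with anti-periodic modes. The only cosmetic difference is that you diagonalize $\ad\alpha$ over the complexification so the system decouples into three scalar ODEs, whereas the paper keeps the real basis $\{i\tau_1,i\tau_2,i\tau_3\}$ and extracts the half-integer roots from a $2\times 2$ determinant condition.
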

\begin{proof}
This is a local computation near each $p_i$, so we work in the fixed fiducial frame near any such point. 
Let $\{\tau_1,\,\tau_2,\,\tau_3\}$ be the standard basis of $\mf{su}(2)$, i.e.
\be\label{sta.bas.su2}
\tau_1=\begin{pmatrix}i&0\\0&-i\end{pmatrix},\quad \tau_2=\begin{pmatrix}0&1\\-1&0\end{pmatrix},\quad 
\tau_3=\begin{pmatrix}0&i\\i&0\end{pmatrix}.
\ee
Then $[\tau_1,\tau_2] = 2\tau_3$, $[\tau_2,\tau_3] = 2\tau_1$, $[\tau_3,\tau_1]=2\tau_2$ and 
the connection matrix $\alpha$ in~\eqref{a.spe.fra} equals $\tau_1/4$.  Thus writing 
\[
\zeta=i\zeta^1\tau_1+i\zeta^2\tau_2+i\zeta^3\tau_3,
\]
then 
\[
[\alpha,\partial_\theta\zeta]=\tfrac{1}{2}(-\partial_\theta\zeta^3i\tau_2+\partial_\theta\zeta^2i\tau_3), \qquad 
[\alpha, [\alpha, \zeta]]=-\tfrac{1}{4}(\zeta^2i\tau_2+\zeta^3i\tau_3),
\]
and hence 
\[
\nabla_{\partial_\theta}^2\begin{pmatrix}\zeta^1\\\zeta^2\\\zeta^3\end{pmatrix}=\begin{pmatrix}  \partial_\theta^2\zeta^1\\\partial_\theta^2\zeta^2-\partial_\theta\zeta^3-\frac {1}{4}\zeta^2\\
\partial_\theta^2 \zeta_3+\partial_\theta\zeta^2-\frac{1}{4} \zeta^3\end{pmatrix}.
\]
Thus $\nabla_{\partial_\theta}^2\zeta+\nu^2\zeta=0$ if and only if
\begin{equation}\label{indicial}
(\partial_\theta^2+\nu^2)\zeta^1=0,\quad \mbox{and} \quad
\begin{array}{rcl}
(\partial_\theta^2-\tfrac{1}{4}+\nu^2)\zeta^2-\partial_\theta\zeta^3 & =& 0, \\[0.3ex]
(\partial_\theta^2-\tfrac{1}{4}+\nu^2)\zeta^3+\partial_\theta\zeta^2 & = & 0.
\end{array}
\end{equation}
The first equation here is uncoupled, and its indicial roots are the integers.  On the other hand, 
restricting the coupled system to the span of $\zeta_\ell(\theta)=e^{i\ell\theta}/ \sqrt{2\pi}$, $\ell \in \Z$, 
then there is a homogeneous solution if and only if 
\[
\det \begin{pmatrix} -\ell^2-\frac{1}{4}+\nu^2&-i\ell\\ i\ell&-\ell^2-\frac{1}{4}+\nu^2 \end{pmatrix} =0,
\]
which occurs precisely when $\nu=\pm|\ell\pm 1/2|$.  Putting these two cases together shows that 
every $\ell/2$, $\ell \in \Z$, is an indicial root. 

Let us now compute the indicial roots for the restriction of $\Delta_A$ to sections of $iL_\Phi$. On $U$, where $\Phi$ is in 
fiducial form, $iL_\Phi$ is spanned by $\sigma(\theta)=\sin(\theta/2) i\tau_2+\cos(\theta/2)i\tau_3$ (which 
equals $-e^{-i\theta/2}\gamma_1$ in the notation of \cref{lim.fid.con}).
Write $\zeta(\theta)=f(\theta)\sigma(\theta)$ with $f(2\pi)=-f(0)$. Then $\nabla_{\partial_\theta}^2\zeta+\nu^2\zeta=0$ if 
and only if $\partial_\theta^2f +\nu^2f=0$. The space $\{f\in H^{2}(\R)\mid f(\theta+2\pi)=-f(\theta)\}$ is spanned by the 
functions $\{\zeta_{\ell+1/2}\}_{\ell\in\Z}$, so this equation has a nontrivial solution if and only if $\nu\in\Z+1/2$. 
\end{proof}

We finally turn to the problem of solvability of \eqref{poi.equ.gam}. To state the main result, let us first introduce 
appropriate function spaces. Let $\mc V_b$ denote the span over $\calC^\infty$ of the vector fields $r\partial_r$ 
and $\partial_\theta$. The corresponding $L^2$-based weighted $b$-Sobolev spaces are defined as follows. First, for $\ell \in \N$, set 
\[
H^\ell_b(\mf{su}(E))=\{u\in L^2(X)\mid V_1\ldots V_ju\in L^2(\mf{su}(E))\mbox{ for all }j\leq \ell,\,V_i\in\mc V_b\},
\]
and then define, for $\delta\in\R$, 
\[
r^\delta H^\ell_b(\mf{su}(E))=\{r^\delta u\mid u\in H^\ell_b(\mf{su}(E))\}.
\]
Since the area form is $r dr d\theta$, then locally near $r=0$, 
\[
r^\nu \in r^\delta H^\ell_b \Leftrightarrow \nu > \delta-1.
\]
This explains various index shifts below. We note, in particular, that
\[
-1/2 < \nu < 1/2 \Leftrightarrow 1/2 < \delta < 3/2. 
\]

From the basic definitions, 
\[
\Delta_A: r^\delta H^{\ell+2}_b(i L_\Phi)\to r^{\delta-2}H^\ell_b( iL_\Phi)
\]
is bounded for every $\delta$ and $\ell$.  The main result shows when this map is Fredholm.

\begin{proposition}\label{con.op.main}
Fix a real number $\nu \not\in\Gamma(\Delta_A|_{i L_\Phi})$ and define $\delta = \nu + 1$. 
\begin{itemize}
\item[i)] The operator 
\[
\Delta_A:r^\delta H^{\ell+2}_b(\mf{su}(E))\to r^{\delta-2}H^\ell_b(\mf{su}(E))
\]
is Fredholm, with index and nullspace remaining constant as $\delta$ varies over each connected component 
of $1 + (\R\setminus\Gamma(\Delta_A))$.
\item[ii)] Suppose that $\Delta_A\zeta=\eta \in r^{\delta-2}H^\ell_b(\mf{su}(E))$, where $\zeta\in r^\delta L^2(\mf{su}(E))$.
Then $\zeta\in  r^\delta H^{\ell+2}_b(\mf{su}(E))$. If $\eta$ is polyhomogeneous, then so is $\zeta$, and the exponents in
the expansion of $\zeta$ are determined by the exponents in the expansion for $\eta$ and the indicial roots $\nu_i 
\in \Gamma(\Delta_A)$ with $\nu_i>\delta-1$. In particular, any element of the nullspace of $\Delta_A$ is polyhomogeneous,
with terms in its expansion determined entirely by the indicial roots in this range. 
\end{itemize}
\end{proposition}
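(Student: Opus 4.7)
The plan is to recognize $\Delta_A$ as an elliptic b-operator (in the sense of Melrose) near each puncture $p_i \in \mf p_\Phi$: in polar coordinates $r^2 \Delta_A$ is a smooth b-differential operator of order two whose indicial family, by~\cref{calc.ind.roots}, is invertible precisely off the indicial set. Away from the punctures the operator is uniformly elliptic with smooth coefficients, so the entire analytic content is local near each $p_i$, and the proposition reduces to the general b-calculus / conic operator theory developed in~\cite{mamo11}.

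For part (i), I would build a global parametrix by gluing an interior parametrix to, at each $p_i$, a Mellin-transform parametrix for the normal operator. Invertibility of the indicial family on the critical line $\Im \lambda = -(\delta - 1)$ (guaranteed by the hypothesis on $\nu$) provides a right parametrix $Q \colon r^{\delta-2} H^\ell_b \to r^\delta H^{\ell+2}_b$ with $\Delta_A Q - \Id$ compact, and similarly a left parametrix; this establishes Fredholmness. Constancy of the nullspace along a connected component of $1 + (\R \setminus \Gamma(\Delta_A))$ follows from the regularity statement of (ii) applied with $\eta = 0$: a nullspace element at some weight $\delta$ in the component automatically lies in every $r^{\delta'} H^{\ell+2}_b$ for $\delta'$ in the same component, since no indicial root is crossed. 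Index constancy then follows either from the same argument applied to the formal adjoint, or from the standard relative-index computation which bounds the index jump across an indicial weight by the dimension of the corresponding indicial space.

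For part (ii), the weighted Sobolev regularity is standard b-elliptic regularity: the left parametrix from (i) satisfies $P \Delta_A = \Id + K$ with $K$ smoothing on the b-scale, and hence $\zeta = P \eta - K \zeta$ inherits membership in $r^\delta H^{\ell+2}_b$ from $\eta \in r^{\delta-2} H^\ell_b$. The polyhomogeneity assertion is proved by an iterative peeling argument. Suppose the leading term of $\eta$ near some puncture is $r^{\nu_j}(\log r)^k \eta_{j,k}(\theta)$. I would solve the indicial equation on the circle, producing a formal solution $r^{\nu_j}(\log r)^{k'} u_{j,k}(\theta)$, where the log-power increases by one precisely when $\nu_j$ is itself an indicial root (resonance). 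Subtracting this contribution from $\zeta$ leaves a residual whose forcing is smaller by at least the gap to the next exponent in the combined index set; iterating and taking an asymptotic Borel sum reconstructs the full polyhomogeneous expansion. The exponents appearing are drawn from those of $\eta$ together with the indicial roots $\nu_i > \delta - 1$, and in the nullspace case $\eta = 0$ only the latter can occur.

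The main obstacle is the bookkeeping in the polyhomogeneity step: one must carefully track log-enhancements at resonant exponents, verify that the iterated peeling converges to a genuine asymptotic expansion modulo residuals of arbitrarily small weight, and check minimality of the resulting index set. This is all textbook b-calculus material - one can invoke the polyhomogeneous push-forward theorem or cite the treatment in~\cite{mamo11} directly - and the substantive analytic input, namely invertibility of the indicial family off the indicial set, has already been supplied by \cref{calc.ind.roots}.
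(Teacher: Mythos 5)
Your sketch is essentially the paper's own approach: the authors give no independent argument for \cref{con.op.main}, but simply note that it is a direct adaptation of \cite[Propositions 5 and 6]{mamo11}, with the proof contained in \cite{ma91}, i.e.\ exactly the b-calculus/conic parametrix construction via the Mellin transform, b-elliptic regularity, and the iterative peeling argument for polyhomogeneity that you describe, with the analytic input being invertibility of the indicial family off the indicial set computed in \cref{calc.ind.roots}. Your outline is correct and matches that standard route.
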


This is a straightforward adaptation of~\cite[Proposition 5 and 6]{mamo11}. The proof can be found in \cite{ma91}

The particular result needed for our immediate purposes is the

\begin{proposition}\label{sol.poiss.equ}
The mapping
\be\label{DA.fre.map}
\Delta_A: r^\delta H^{\ell+2}_b(iL_\Phi)\to r^{\delta-2}H^\ell_b(iL_\Phi)
\ee
is an isomorphism when $1/2<\delta < 3/2$. 
\end{proposition}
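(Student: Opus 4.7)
The plan is to combine Fredholmness from the preceding \cref{con.op.main} with an energy/integration-by-parts argument that rules out kernel and cokernel. Since $\Gamma(\Delta_A|_{iL_\Phi}) = \tfrac12 + \Z$, the choice $\delta \in (1/2, 3/2)$ corresponds to $\nu = \delta - 1 \in (-1/2, 1/2)$, which misses every indicial root. Part i) of \cref{con.op.main} then gives that \eqref{DA.fre.map} is Fredholm and that both $\dim\ker$ and the index are constant as $\delta$ varies throughout $(1/2, 3/2)$. So the whole question reduces to showing that at some single weight in this window the kernel and cokernel are both trivial.

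For injectivity, suppose $\zeta \in r^\delta H^{\ell+2}_b(iL_\Phi)$ satisfies $\Delta_A \zeta = 0$. Part ii) of \cref{con.op.main} forces $\zeta$ to be polyhomogeneous, with every exponent in its expansion at $p_i$ an indicial root strictly greater than $\delta - 1 > -1/2$. The smallest element of $\tfrac12 + \Z$ satisfying this is $\nu = 1/2$, so $\zeta = O(r^{1/2})$ and $\nabla_{\partial_r}\zeta = O(r^{-1/2})$ near each puncture. Applying Green's identity on the excised surface $X_\epsilon := X \setminus \bigcup_i B_\epsilon(p_i)$ gives
\[
0 = \int_{X_\epsilon} \langle \Delta_A \zeta, \zeta\rangle \, dA
= \int_{X_\epsilon} |d_A \zeta|^2 \, dA + \sum_i \int_{\partial B_\epsilon(p_i)} \langle \nabla_{\partial_r}\zeta, \zeta\rangle \, \epsilon \, d\theta,
\]
and the boundary terms are $O(\epsilon)$ by the asymptotics just noted. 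Letting $\epsilon \searrow 0$ forces $d_A \zeta = 0$. But $iL_\Phi$ is a real line bundle and the induced connection preserves its fibre metric (as shown just above \cref{equ.s.gauge}), so $|\zeta|$ is constant on $X^\times$; since $|\zeta|\to 0$ at each puncture, $\zeta \equiv 0$.

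For surjectivity, I would use formal self-adjointness of $\Delta_A$ with respect to the $L^2$-pairing furnished by the area form $r\,dr\,d\theta$. Under this pairing, $(r^{\delta-2}H^\ell_b)^* \cong r^{2-\delta}H^{-\ell}_b$ and $(r^\delta H^{\ell+2}_b)^* \cong r^{-\delta}H^{-\ell-2}_b$, so the cokernel of $\Delta_A$ at weight $\delta$ is identified with the space of distributional solutions $\eta \in r^{2-\delta}H^{-\ell}_b$ of $\Delta_A\eta = 0$. Interior elliptic regularity plus the b-regularity statement of part ii) (applied after a standard bootstrap at the punctures) upgrades any such $\eta$ to $r^{2-\delta}H^{\ell+2}_b$. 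Since $2-\delta \in (1/2, 3/2)$ whenever $\delta \in (1/2, 3/2)$, the injectivity argument of the previous paragraph applies to $\eta$ as well and gives $\eta \equiv 0$. Equivalently, at the self-adjoint weight $\delta = 1$ the cokernel equals the kernel, so the index there is $0$; by constancy of the index on $(1/2, 3/2)$ combined with triviality of the kernel, the cokernel vanishes throughout.

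The main obstacle is the bookkeeping for the boundary integrals and for the regularity upgrade on cokernel representatives: both rely crucially on the polyhomogeneity output of \cref{con.op.main} and on the fact that the window $(1/2, 3/2)$ is symmetric about the self-adjoint weight $\delta = 1$ and contains no indicial root. Everything else is routine once the window is identified.
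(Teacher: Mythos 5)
Your proposal is correct and follows essentially the same route as the paper: Fredholmness from \cref{con.op.main} since $(-1/2,1/2)$ avoids the indicial roots $\tfrac12+\Z$, polyhomogeneity forcing an $r^{1/2}$ leading term, integration by parts with vanishing boundary terms to conclude the kernel element is parallel (hence zero, as it decays at the punctures), and identification of the adjoint at weight $\delta$ with the operator at weight $2-\delta$, using the symmetry of $(1/2,3/2)$ about the self-adjoint weight to kill the cokernel. The only difference is that you spell out a few steps (the metric-preserving connection argument and the duality of weighted spaces) slightly more explicitly than the paper does.
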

\begin{proof}
Since the interval $(-1/2,1/2)$ contains no indicial roots, \cref{con.op.main} shows that this map is Fredholm.  
The final statement of that result shows that any element of the nullspace of \eqref{DA.fre.map}, with $\delta$
in this range, is polyhomogeneous with leading term $r^{1/2}$.  We shall show below that this implies
that the nullspace is trivial. One further general remark is that the adjoint of \eqref{DA.fre.map} with weight 
$\delta$ can be identified with the corresponding map with weight $2-\delta$. Since the interval $(1/2, 3/2)$ 
is invariant under this reflection, it follows that the cokernel is also trivial, or in other words, 
\eqref{DA.fre.map} is an isomorphism as claimed. 

Thus it suffices to check that this mapping is injective, and we avail ourselves of the fact that 
if $\Delta_A \gamma = 0$ with $\varphi \in r^\delta L^2_b$, $1/2 < \delta < 3/2$, then $\gamma$
is polyhomogeneous with leading term $r^{1/2}$.  

Set $X^\times_\varepsilon = X^\times \setminus \bigcup B_\varepsilon(p_i)$. With $\gamma$ as above, we have
\[
0 = \int_{X^\times_\varepsilon }\langle\Delta_A\gamma,\gamma \rangle=\int_{X^\times_\varepsilon}|d_A \gamma|^2 + 
\int_{\del X^\times_\varepsilon}   \langle \del_\nu \gamma, \gamma \rangle.
\]
Since $\gamma \sim r^{1/2}$ and $\del_\nu \gamma \sim r^{-1/2}$, and the length of $\del X^\times_\varepsilon$ is of order
$\varepsilon$, the boundary term tends to zero. This proves that $\gamma$ is parallel with respect to $A$. 
However, since it vanishes as $r \to 0$, it must be identically $0$.  This proves the result.
\end{proof}

We apply this as follows. Let $A$ be the connection obtained at the end of the last subsection. Although it
has simple poles at the points of $\mf{m}_\Phi$, it is flat in a neighborhood of these points. This means
that the right hand side of \eqref{poi.equ.gam} vanishes near each $p_i$, hence the solution $\gamma$
of this equation is polyhomogeneous and vanishes like $r^{1/2}$ at these points.  We obtain, therefore,
a complex gauge transformation $g_1 = \exp \gamma$ such that $\Phi^{g_1}=\Phi$ and the trace-free part
of the curvature of $A^{g_1}$ vanishes. 

\bigskip

Resetting notation back to the initial Higgs pair $(A,\Phi)$, we have now produced a gauge-equivalent Higgs pair 
$(A,\Phi)^{g_0g_1}$ consisting of a projectively flat unitary connection $A$ and a normal Higgs field which is fiducial 
near the punctures in a certain unitary frame.  Note that $A^{g_0g_1}$ may not be in fiducial form, but 
applying \cref{nor.form.fid} gives a unitary gauge transformation $g_2\in\Gamma(\bigcup U_i^\times,\U(E))$ 
which stabilizes $\Phi^{g_0}$ and which can be extended to a global unitary gauge transformation over $X^\times$. 
Finally, $g_\infty=g_2g_1g_0\in \Gamma(X^\times,\SL(E))$ is the complex gauge transformation for which we have been
searching. This finishes the proof of \cref{ex.out.sol}.\hfill$\blacksquare$       
%
\subsection{Deformation theory of limiting configurations}\label{def.lim.config}
Fix a holomorphic quadratic differential $q$ and consider limiting configurations $(A_\infty,\Phi_\infty)$ with $\det \Phi_\infty = q$. 
We want to study the moduli space of these up to unitary gauge transformations. By \cref{norm.det} again, we see that if $\Phi_\infty$ and $\Phi_\infty'$ are Higgs fields with $\det\Phi_\infty = \det \Phi_\infty'$ and which are normal on $X^\times=X \setminus \mathfrak p_{\Phi_\infty}$, then there exists a gauge 
transformation $g \in \Gamma(X^\times,\SU(E))$ such that $g^{-1} \Phi_\infty g = \Phi_\infty'$. This leads us to study 
the solutions of 
$$
\bar\partial_A \Phi_\infty = 0, \qquad F_A^\perp = 0
$$
up to the action of the stabilizer of $\Phi_\infty$ in $\Gamma(X^\times,\SU(E))$. Writing $A = A_\infty + \alpha$, $\alpha \in \Omega^1(\mf{su}(E))$, this system is equivalent to
\[
[ \alpha \wedge \Phi_\infty] = 0, \qquad d_{A_\infty} \alpha = 0.
\]
Here we have used that $[\alpha \wedge \alpha]=0$ since $\alpha$ has values in the line bundle $L_{\Phi_\infty}$ in view of the first equation and the following lemma.
\begin{lemma}
For $\alpha \in \Omega^1(\su(E))$ and $\Phi \in \Omega^{1,0}(\sell(E))$ normal the following statements are equivalent:
\begin{enumerate}[i)]
\item $[\alpha \wedge \Phi] =0$;
\item $\alpha \in \Omega^1(L_{\Phi})$.
\end{enumerate}
\end{lemma}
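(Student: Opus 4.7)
The plan is to reduce the statement to a pointwise linear algebra verification, which on a Riemann surface becomes especially simple thanks to the vanishing of $(2,0)$-forms.

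The implication $(ii)\Rightarrow(i)$ is immediate from the definition of $L_\Phi$: if $\alpha$ takes values in $L_\Phi$ pointwise, then in the wedge-bracket $[\alpha\wedge\Phi]$ the matrix factors commute at each point, so the expression vanishes identically. This requires no further computation.

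For $(i)\Rightarrow(ii)$, I would first reduce the condition to a purely $(0,1)$ statement. Decompose $\alpha=\alpha^{1,0}+\alpha^{0,1}$ under $T^*X\otimes\C=\Lambda^{1,0}\oplus\Lambda^{0,1}$. Then $[\alpha^{1,0}\wedge\Phi]$ is a $\sell(E)$-valued $(2,0)$-form on a Riemann surface and therefore vanishes automatically. Hence the hypothesis $[\alpha\wedge\Phi]=0$ is equivalent to $[\alpha^{0,1}\wedge\Phi]=0$. Writing locally $\alpha=A\,dx+B\,dy$ with $A,B\in\su(E)$ and $\Phi=\varphi\,dz$, a direct expansion identifies this with the single matrix equation $[A+iB,\varphi]=0$. (This is where the reality constraint $\alpha^{1,0}=-(\alpha^{0,1})^*$ will be used below.)

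Next I work in a local unitary frame diagonalizing $\varphi$, which exists since $\Phi$ is normal on $X^\times$; there $\varphi=\lambda\,\tau$ with $\tau=\mathrm{diag}(1,-1)$ and $\lambda\neq 0$ on $X^\times$. A $2\times 2$ computation shows that $[M,\tau]=0$ forces $M$ to be diagonal, so $A+iB=2p\,\tau$ for some complex function $p$. Applying the Hermitian adjoint and using $A^*=-A$, $B^*=-B$ then gives $A-iB=-2\bar p\,\tau$, so that $A=2i\,\Im(p)\,\tau$ and $B=-2i\,\Re(p)\,\tau$. Both are real multiples of $i\tau$, which in this frame spans the skew-Hermitian infinitesimal stabilizer $L_\Phi$. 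Hence $\alpha$ takes values in $L_\Phi$ pointwise on $X^\times$; by continuity this extends across the zeros of $\Phi$ (where $L_\Phi$ has already been understood in the fiducial discussion of \cref{lim.fid.con}).

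The only mild subtlety is bookkeeping the reality condition $\alpha^{1,0}=-(\alpha^{0,1})^*$ correctly, ensuring that $a_0:=\tfrac12(A+iB)$ being diagonal forces the real-1-form $\alpha$ (not merely its $(0,1)$-part) to land in $L_\Phi\subset\su(E)$. This is the only substantive point, and it is resolved by the short adjoint calculation above. No analytic difficulty arises: the argument is entirely pointwise linear algebra combined with the Riemann surface dimension identity $\Lambda^{2,0}=0$.
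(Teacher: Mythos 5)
Your argument is correct, and its first half is identical to the paper's: decompose $\alpha$ into types, observe that $[\alpha^{1,0}\wedge\Phi]$ vanishes for dimensional reasons, and reduce the hypothesis to the pointwise equation $[\varphi,\alpha_{\bar z}]=0$ with $\alpha_{\bar z}=\tfrac12(A+iB)$. You diverge only in the final step. The paper stays frame-free: from $[\varphi,\alpha_{\bar z}]=0$ it reads off $\alpha^{0,1}\in\Omega^{0,1}(L_\Phi^\C)$, takes adjoints (using $\alpha^{1,0}=-(\alpha^{0,1})^*$, i.e.\ the $\su(E)$-reality of $\alpha$) to get $\alpha^{1,0}\in\Omega^{1,0}(L_{\Phi^*}^\C)$, and then uses normality exactly once, through the identity $L_\Phi=L_{\Phi^*}$, to conclude $\alpha\in\Omega^1(L_\Phi)$. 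You instead use normality to pick a local unitary eigenframe and carry out the explicit $2\times 2$ computation $A+iB=2p\tau$, $A-iB=-2\bar p\tau$; this is a valid concretization of the same reality/adjoint step, but it additionally requires a smooth eigenframe, hence distinct eigenvalues of $\varphi$ (equivalently $\det\Phi\neq 0$), which is available on $X^\times$ where the lemma is applied but is not needed in the paper's version. Also, your closing remark about extending the conclusion across the zeros of $\Phi$ is unnecessary and slightly misleading: the lemma is only used over $X^\times$, where $L_\Phi$ is a line bundle, and near the punctures $L_\Phi$ is twisted and does not extend as a subbundle across them, so it is cleaner to drop that sentence.
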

\begin{proof}
Decompose $\alpha=\alpha^{1,0} + \alpha^{0,1}$. Then $[\alpha \wedge \Phi]=[\alpha^{0,1} \wedge \Phi]$ for dimensional reasons. Computing locally, i.e.\ writing $\alpha^{0,1} = \alpha_{\bar z} \, d\bar z$ and $\Phi = \varphi \, dz$, we get
\[
[\alpha^{0,1} \wedge \Phi] = [\varphi, \alpha_{\bar z}] \,dz \wedge d \bar z.
\] 
Assuming that $[\alpha \wedge \Phi]=0$ we therefore obtain $\alpha^{0,1} \in \Omega^{0,1}(L_\Phi^\C)$. Similarly, $\alpha^{1,0} \in \Omega^{1,0}(L_{\Phi^*}^\C)$. Now if $\Phi$ is normal, then $L_\Phi=L_{\Phi^*}$, such that $\alpha \in \Omega^1(L_\Phi)$. 
The converse is trivial.
\end{proof}

The determination of the infinitesimal deformation space amounts to a cohomology computation:

\begin{lemma}
If all zeroes of $q$ are simple, then 
\[
\dim_\R H^1(X^\times; L_{\Phi_\infty}) = 6\gamma-6,
\]
where $\gamma$ is the genus of $X$.
\end{lemma}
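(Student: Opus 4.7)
The plan is to interpret $H^1(X^\times; L_{\Phi_\infty})$ as the cohomology of a real rank-one local system on the punctured surface $X^\times = X \setminus \mathfrak{p}_{\Phi_\infty}$ and then compute it via the Euler characteristic. The line bundle $L_{\Phi_\infty}$ is flat with respect to the induced connection, because $d_{A_\infty} \Phi_\infty = 0$ implies that $[\cdot\,,\Phi_\infty]$ commutes with $d_{A_\infty}$, so parallel transport preserves its kernel. Thus $L_{\Phi_\infty}$ is described by its holonomy representation $\pi_1(X^\times) \to \O(1) = \{\pm 1\}$.

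First I would read off the local monodromy at each puncture. From the computation in the proof of \cref{calc.ind.roots}, the line bundle $iL_\Phi$ is spanned near each $p_i$ by the section $\sigma(\theta) = \sin(\theta/2)\, i\tau_2 + \cos(\theta/2)\, i\tau_3$, which satisfies $\sigma(\theta + 2\pi) = -\sigma(\theta)$. Hence the holonomy of $L_{\Phi_\infty}$ around each of the $n = 4(\gamma-1)$ punctures is $-1$.

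Next I would compute the Euler characteristic. Since $X^\times$ is a genus-$\gamma$ surface with $4(\gamma-1)$ punctures,
\[
\chi(X^\times) = (2-2\gamma) - 4(\gamma-1) = 6 - 6\gamma,
\]
and because Euler characteristic of a local system equals its rank times $\chi$ of the base,
\[
\chi(X^\times;\, L_{\Phi_\infty}) \;=\; 6 - 6\gamma.
\]
It remains to verify that $H^0$ and $H^2$ both vanish. For $H^0$: a global section of the local system corresponds to a vector fixed by the entire monodromy group; but a small loop around any puncture acts by $-1$, so the only invariant vector is $0$. For $H^2$: since $X^\times$ is a non-compact connected surface, it deformation-retracts onto a $1$-complex (a wedge of $2\gamma + 4(\gamma-1) - 1$ circles), so its cohomology vanishes above degree $1$ for every local system. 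Combining these gives
\[
\dim_\R H^1(X^\times; L_{\Phi_\infty}) \;=\; -\chi(X^\times;\, L_{\Phi_\infty}) \;=\; 6\gamma - 6.
\]

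This is essentially a topological bookkeeping argument, and I do not anticipate a serious obstacle. The only non-routine point is identifying the holonomy at each puncture as $-1$, but this is already recorded in the indicial root computation and reflects the fact that $L_{\Phi_\infty}$ is the associated bundle of a nontrivial $S^1$-subbundle over each punctured disk, reflecting the interchange of the two eigenlines of $\Phi_\infty$ as one circles a simple zero of $\det \Phi_\infty$.
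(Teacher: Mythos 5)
Your proof is correct and takes essentially the same route as the paper: identify $L_{\Phi_\infty}$ as a flat real line bundle so that $\chi(X^\times;L_{\Phi_\infty})=\chi(X^\times)=6-6\gamma$, then show $H^0$ and $H^2$ vanish, the former because the monodromy around each of the $4\gamma-4$ punctures is $-1$ (the bundle is twisted there). The only cosmetic difference is in killing $H^2$: you retract $X^\times$ onto a $1$-complex, while the paper uses Poincar\'e duality on $M=X\setminus\bigcup B_\varepsilon(p_i)$ to identify $H^2$ with $H^0(M,\partial M;L_{\Phi_\infty})=0$; both are valid.
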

\begin{proof}
Since $L_\infty$ is a real line bundle, 
\[
\chi(X^\times;L_{\Phi\infty}) = \chi(X^\times) = 2-2\gamma-k
\]
where $k=|\mathfrak p|$ is the number of zeroes. There are no parallel sections since $L_{\Phi_\infty}$ is twisted near 
each $p_i$, i.e., $H^0(X^\times;L_{\Phi_\infty})=0$. With $M = X \setminus B_\varepsilon(\mathfrak p)$ (so
$\del M$ is a union of $k$ circles), Poincar\'e duality yields
\[
H^2( X^\times; L_{\Phi_\infty}) = H^2(M;L_{\Phi_\infty}) = H^0(M,\partial M;L_{\Phi_\infty})=0.
\]
Therefore 
\[
\dim_\R H^1(X^\times; L_{\Phi_\infty}) =k + 2\gamma-2 = 4\gamma-4 + 2\gamma-2 = 6\gamma-6
\]
as claimed.
\end{proof}

We see finally that in the long exact cohomology sequence for the pair $(M,\del M)$, the natural map
\[
H^1(M,\partial M; L_{\Phi_\infty}) \longrightarrow H^1(M; L_{\Phi_\infty}) 
\]
must be an isomorphism. 

\begin{corollary}\label{bound.stratum}
The moduli space of limiting configurations with determinant equal to a fixed holomorphic quadratic differential $q$ with simple 
zeroes is a torus of dimension $6\gamma-6$.
\end{corollary}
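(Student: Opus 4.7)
The plan is to realize the moduli space as a quotient of the infinitesimal deformation space $H^1(X^\times; L_{\Phi_\infty})$ by a full-rank discrete lattice, and thereby identify it with a torus of the asserted dimension.

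First I would fix a reference limiting configuration $(A_\infty, \Phi_\infty)$ with $\det \Phi_\infty = q$; such a configuration exists by \cref{ex.out.sol} applied to any polystable Higgs bundle with the prescribed determinant (for instance, the one obtained from a spectral cover of $q$). If $(A'_\infty, \Phi'_\infty)$ is any other limiting configuration with $\det \Phi'_\infty = q$, then \cref{norm.det} provides a unitary gauge transformation $g \in \Gamma(X^\times, \SU(E))$ with $g^{-1}\Phi'_\infty g = \Phi_\infty$; after pulling back by $g$ I may assume the two configurations share the same Higgs field $\Phi_\infty$. The difference $\alpha := A'_\infty - A_\infty \in \Omega^1(\mf{su}(E))$ then satisfies $[\alpha \wedge \Phi_\infty]=0$ (from $\delb_{A_\infty+\alpha}\Phi_\infty = 0$) and $d_{A_\infty}\alpha = 0$ (from $F^\perp_{A_\infty+\alpha}=0$, together with $[\alpha \wedge \alpha]=0$, which holds once $\alpha$ takes values in the abelian line bundle $L_{\Phi_\infty}$ by the preceding lemma). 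Hence $\alpha$ is a closed section of $\Omega^1(L_{\Phi_\infty})$.

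Next, I would quotient by the unitary stabilizer $\mc S := \Stab_{\Gamma(X^\times, \SU(E))}(\Phi_\infty)$. Its identity component $\mc S_0$ is the image under exponentiation of $\Omega^0(L_{\Phi_\infty})$ and acts on closed sections of $\Omega^1(L_{\Phi_\infty})$ by $\alpha \mapsto \alpha + d_{A_\infty}\xi$, so the quotient by this action is exactly $H^1(X^\times; L_{\Phi_\infty})$, of real dimension $6\gamma-6$ by the preceding lemma. The full moduli space is therefore $H^1(X^\times; L_{\Phi_\infty})/\Lambda$, where $\Lambda = \pi_0(\mc S)$ acts by translation via the period/holonomy map. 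The decisive step is to verify that $\Lambda$ is a lattice of full rank $6\gamma-6$: the group $\mc S$ consists of global sections of the $S^1$-bundle of pointwise unitary stabilizers of $\Phi_\infty$ on $X^\times$, which carries a nontrivial $\Z/2$-monodromy around each puncture (visible in the form $\gamma_\mu$ and the constraint $e^{i\theta}\mu + \bar\mu = 0$ from \cref{lim.fid.con}). The component group of sections is classified by twisted integral cohomology, and I would match its generators to a basis of homology cycles of $X^\times$ by constructing explicit winding gauge transformations supported near each cycle, in direct analogy with the description of the Jacobian $H^1(X;\R)/H^1(X;\Z)$. Once $\Lambda$ is shown to have the maximal rank, the quotient $H^1(X^\times; L_{\Phi_\infty})/\Lambda$ is a compact, connected abelian Lie group of real dimension $6\gamma - 6$, i.e.\ a torus.

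The main obstacle is precisely this last step: correctly identifying $\Lambda$ and confirming that it is a lattice of full rank. One must keep careful track of the half-integer monodromy of the $S^1$-stabilizer bundle around each $p_i \in \mf p_\Phi$, which determines which "integer period" changes of $\alpha$ arise from large stabilizer gauge transformations rather than from honest deformations; this is a twisted-cohomology bookkeeping problem where one needs to reconcile the contribution from the $k = 4\gamma - 4$ punctures with the total dimension $6\gamma - 6$ of $H^1(X^\times; L_{\Phi_\infty})$ computed above, and thereby guarantee compactness of the quotient.
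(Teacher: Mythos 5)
Your framework coincides with the paper's: after using \cref{norm.det} to fix the Higgs field, the moduli space becomes the space of closed $L_{\Phi_\infty}$-valued $1$-forms modulo the stabilizer of $\Phi_\infty$ in $\Gamma(X^\times,\SU(E))$; the identity component sweeps out exact forms, giving $H^1(X^\times;L_{\Phi_\infty})$ of dimension $6\gamma-6$, and what remains is a quotient by a discrete group of translations $\Lambda$. The one place where your write-up is not yet a proof is exactly the step you flag as ``the main obstacle'': identifying $\Lambda$ and showing it is a full-rank lattice. As it stands this is a genuine gap, since compactness of the quotient --- the whole content of the word ``torus'' --- rests on it, and your proposed route (constructing explicit winding gauge transformations near each cycle and reconciling puncture contributions) is left entirely unexecuted.

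The gap closes more easily than you anticipate, and this is precisely the paper's one-sentence argument. A stabilizer element $g$ is a section of the bundle of pointwise unitary stabilizers of $\Phi_\infty$, a circle bundle whose ``Lie algebra'' bundle is $L_{\Phi_\infty}$; hence locally $g=\exp(\log g)$ with $\log g$ a multivalued section of $L_{\Phi_\infty}$, and $g$ acts on connections by $d_A\mapsto d_A+d_A\log g$. The translation class of $g$ is therefore a de Rham class in $H^1(X^\times;L_{\Phi_\infty})$ with periods in $2\pi\Z$ (in terms of the fibre generator), and conversely any such class integrates to a multivalued $\log g$ whose exponential is a genuine global section of the circle bundle; so $\Lambda$ is exactly the lattice of integer-period classes. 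Its full rank needs no cycle-by-cycle construction and no bookkeeping at the punctures: the integer-period classes are the image of the cohomology of the $\Z$-local system twisted by the same $\Z/2$ monodromy as $L_{\Phi_\infty}$, and by the universal coefficient theorem this image is a lattice whose rank equals $\dim_\R H^1(X^\times;L_{\Phi_\infty})=6\gamma-6$, which was already computed in the preceding lemma. With that observation inserted, your argument and the paper's proof are the same.
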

\begin{proof}
The action of $g \in \Stab_{\Phi_\infty}$ on a connection $A$ is given by
\[
g^{-1} \circ d_A \circ g = d_A + g^{-1}(d_A g) = d_A + d_A \log g
\]
where $g$ is a section of a nontrivial circle bundle (and $\log g$ a multivalued section of $L_{\Phi_\infty}$). Therefore the moduli 
space under consideration is simply the quotient of the de Rham cohomology space $H^1(X^\times;L_{\Phi_\infty})$ 
by the lattice of classes with integer periods. The result thus follows from the previous lemma. 
\end{proof}

\begin{remark}
This is consistent with \cite[Theorem 8.1]{hi87}, where it is shown that the space of Higgs bundles $(\delb,\Phi)$ with fixed determinant and
with simple zeroes is a $(3\gamma-3)$-dimensional Prym variety (and thus a $(6\gamma-6)$-dimensional real torus). 
\end{remark}
%
%
%
\section{The linearized problem}
%
\subsection{Linearization of the Hitchin operator}
For any Hermitian vector bundle $V\to X$ with connection $\nabla$, denote by $W^{k,p}(V)$ the usual
Sobolev space of sections $s$ with $\nabla^j s \in L^p$, $j \leq k$; we adopt the usual shorthand, 
writing $H^k(V)$ when $p=2$, etc. More generally, we also consider $W^{k,p}$ sections of fibre bundles. 

Since we are in the fixed determinant case, we fix a background connection $A_0$ now and consider the Hitchin operator 
\[
\mc H_t(A,\Phi) = (F^\perp_A+t^2[\Phi \wedge \Phi^*],\bar\partial_A\Phi)
\]
for connections $A$ which are trace-less relative to $A_0$ and trace-less Higgs fields $\Phi$. We further consider the orbit map
\begin{equation}
\mathcal{O}_{(A,\Phi)}(\gamma)=(A,\Phi)^g=(A^g,\Phi^g), \qquad g = \exp (\gamma). 
\label{orb.map}
\end{equation}
Our ultimate goal is to find a point in the complex gauge orbit of a given Higgs pair $(A,\Phi)$ which is in
the nullspace of $\mc H_t = 0$. Since the condition that $\bar{\partial}_A \Phi = 0$ is preserved under 
the complex gauge group, we in fact only need to find a solution of 
\begin{equation}
F_t(\gamma) := \pr_1 \circ \mathcal{H}_t \circ \mathcal{O}_{(A,\Phi)}(\exp (\gamma)) = 0. 
\label{eq:defmapFt}
\end{equation}
More explicitly, we wish to solve
\[
F_{A^g}^\perp+t^2[\Phi^g\wedge(\Phi^g)^*] = 0, \qquad g = \exp(\gamma).
\]

Using the continuity of the multiplication maps $H^1\cdot H^1\to L^2$ and $H^2\cdot H^1\to H^1$, it 
is straightforward that the three maps
\begin{align}
&\mc H_t \colon H^1(\Lambda^1\otimes \mf{su}(E)\oplus\Lambda^{1,0}\otimes \mf{sl}(E)) \to L^2(\Lambda^2\otimes\mf{su}(E)
\oplus\Lambda^{1,1}\otimes\mf{sl}(E)),\nonumber\\[0.5ex]
&\mathcal{O}_{(A,\Phi)}\colon H^2(i\mf{su}(E)) \to  H^1(\Lambda^1\otimes\mf{su}(E)\oplus\Lambda^{1,0}\otimes\mf{sl}(E)), \label{eq:nonlinmaps}\\[0.5ex]
&F_t\colon H^2(i\mf{su}(E)) \to L^2(\Lambda^2\otimes\mf{su}(E)),\nonumber
\end{align}
are all well-defined and smooth. 

We now compute the linearizations of these mappings.  First, the differential at $g = \mbox{Id}$ of \eqref{orb.map} is 
\[
\Lambda_{(A,\Phi)}\gamma = (\Lambda_A(\gamma), \Lambda_\Phi(\gamma))  = (\bar\partial_A\gamma-\partial_A\gamma^{\ast},[\Phi,\gamma]),
\]
so when $\gamma\in \Omega^0(i\mf{su}(E))$,
\[
\Lambda_{(A,\Phi)}\gamma=(\bar\partial_A\gamma-\partial_A\gamma,[\Phi,\gamma]).
\] 
Next, 
\[
D\mc H_t \begin{pmatrix} \dot A \\ \dot \Phi \end{pmatrix}= \begin{pmatrix}
 d_A & t^2( [\Phi \wedge {\cdot\,}^{\ast}]+[\Phi^{\ast}\wedge\cdot\,])\\[0.5ex]
[\Phi\wedge\cdot\,] & \bar{\partial}_A
 \end{pmatrix}\begin{pmatrix} \dot A \\ \dot \Phi \end{pmatrix}
\]
whence
\[
(D\mc H_t \circ \Lambda_{(A,\Phi)})(\gamma) = \begin{pmatrix}(\partial_A\bar\partial_A-\bar\partial_A\partial_A)
\gamma+t^2([\Phi\wedge[\Phi,\gamma]^{\ast}]+[\Phi^{\ast}\wedge[\Phi,\gamma]])\\[0.5ex] 
[\Phi\wedge(\bar\partial_A\gamma-\partial_A\gamma)]+\bar\partial_A[\Phi,\gamma]
\end{pmatrix}.
\]
The first component is precisely $DF_t(\gamma)$.  Using that $\bar\partial_A\Phi=0$, as well as the fact that
$[\Phi\wedge\partial_A\gamma]=0$ for dimensional reasons, the entire second component vanishes. Now recall 
from~\cite[Prop.\ 1.4.21 and 1.4.22]{ni00} the identities
\begin{eqnarray*}
2\bar\partial_A\partial_A=F_A-i\ast\Delta_A,\qquad2\partial_A\bar\partial_A=F_A+i\ast\Delta_A,
\end{eqnarray*}
as well as 
\[
[ \Phi \wedge [\Phi,\gamma]^*] = - [ \Phi \wedge [ \Phi^*,\gamma]],
\]
to rewrite 
\begin{equation}\label{eq:operatorDt}
DF_t(\gamma) =  i\ast\Delta_A\gamma+t^2M_\Phi \gamma,
\end{equation}
where
\[
M_\Phi\gamma:=[\Phi^{\ast}\wedge[\Phi,\gamma]] - [ \Phi \wedge [ \Phi^*,\gamma]].
\]  
Applying $-i \,\ast : \Omega^2(\su(E)) \to \Omega^0(i\su(E))$ finally yields the operator
\[
L_t(\gamma) = \Delta_A \gamma  - i \ast t^2 M_\Phi \gamma.
\]
Observe that
\begin{multline*}
\Lambda_{(A,\Phi)}\colon\Omega^0(\mf{sl}(E))\to\Omega^1(\mf{su}(E))\oplus\Omega^{1,0}(\mf{sl}(E)) \\
D\mc H_t \circ\Lambda_{(A,\Phi)}\colon\Omega^0(i\mf{su}(E))\to\Omega^2(\mf{su}(E))\oplus\Omega^{1,1}(\mf{sl}(E)) \\
\mbox{and} \qquad
L_t \colon\Omega^0(i \su(E)) \rightarrow \Omega^0(i \su(E)), 
\end{multline*}
are all bounded from $H^1$ to $L^2$, or $H^2$ to $L^2$ respectively.

Remarkably, $L_t \geq 0$: 

\begin{proposition}\label{positivity}
If $\gamma \in \Omega^0(i \su(E))$, then
\[
\langle \ast L_t \gamma, \gamma \rangle_{L^2} = t^{-2}\|d_A \gamma\|_{L^2}^2 + 4 \| [ \Phi,\gamma ]\|_{L^2}^2 \geq 0.
\]
In particular, $L_t \gamma =0$ if and only if $d_A\gamma=[\Phi,\gamma ] = 0$.
\end{proposition}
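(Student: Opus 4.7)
The approach is to compute $\langle \ast L_t \gamma, \gamma\rangle_{L^2}$ by treating the two pieces of $L_t = \Delta_A - i\ast t^2 M_\Phi$ separately, and then to read off the ``if and only if'' assertion from the non-negativity of each piece.

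For the Laplacian piece, I would integrate by parts. Since $\Delta_A = d_A^* d_A$ on $\Omega^0(i\mathfrak{su}(E))$ and $X$ is closed,
\[
\langle \Delta_A \gamma, \gamma\rangle_{L^2} = \|d_A \gamma\|_{L^2}^2,
\]
which produces the first summand on the right-hand side (with its $t$-weight appearing after an overall rescaling of $L_t$). No boundary term arises.

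For the bracket piece, I would compute $-i\ast M_\Phi\gamma$ pointwise. Writing $\Phi = \varphi\,dz$ in a local unitary frame, a direct expansion gives
\[
M_\Phi\gamma = \bigl([\varphi^*,[\varphi,\gamma]] + [\varphi,[\varphi^*,\gamma]]\bigr)\,d\bar z\wedge dz,
\]
and $-i\ast$ converts the $(1,1)$-form part into a positive real scalar multiple. The key algebraic step is to apply Ad-invariance of the trace, $\mathrm{tr}([A,B]C) = -\mathrm{tr}(B[A,C])$, to both summands so as to rewrite their pointwise pairing with $\gamma$ as a multiple of $\mathrm{tr}([\varphi,\gamma][\varphi^*,\gamma])$. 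The Hermiticity $\gamma^* = \gamma$ then yields $[\varphi,\gamma]^* = [\gamma,\varphi^*] = -[\varphi^*,\gamma]$, which turns this trace into $-|[\varphi,\gamma]|^2$. Integrating, the commutator piece contributes a positive multiple of $\|[\Phi,\gamma]\|_{L^2}^2$.

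Assembling the two contributions yields the claimed identity. The ``if and only if'' conclusion then follows at once: since both $\|d_A\gamma\|_{L^2}^2$ and $\|[\Phi,\gamma]\|_{L^2}^2$ are non-negative, vanishing of $\langle \ast L_t\gamma,\gamma\rangle_{L^2}$ forces each to vanish; conversely $d_A\gamma = 0$ and $[\Phi,\gamma] = 0$ visibly kill every term in $L_t\gamma$. The only obstacle I foresee is bookkeeping: tracking the factors of $i$, $\ast$, and the conformal normalization with enough care that the precise constants stated (in particular the factor $4$ and the $t^{-2}$ weight on the Laplacian piece) come out correctly. There is no analytic difficulty beyond standard integration by parts on a compact surface.
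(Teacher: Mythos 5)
Your proposal is correct and follows essentially the same route as the paper: the paper also reduces to a pointwise computation of $-i\ast M_\Phi\gamma$ in a local frame with $\Phi=\varphi\,dz$, uses the $\ad$-invariance of the trace pairing together with $\gamma^*=\gamma$ (so $[\varphi^*,\gamma]=-[\varphi,\gamma]^*$) to get the pointwise value $4|[\varphi,\gamma]|^2$, and combines this with the standard integration by parts $\langle\Delta_A\gamma,\gamma\rangle_{L^2}=\|d_A\gamma\|_{L^2}^2$. The only delicate part is exactly the bookkeeping of $i$, $\ast$, and volume-form factors you flag (e.g.\ $2i\ast 1=-dz\wedge d\bar z$, which produces the factor $4$), and that matches the paper's computation.
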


\noindent This follows directly from the 

\begin{lemma}
For $\gamma \in \Omega^0(i \su(E))$, 
\[
\langle-i\ast M_\Phi \gamma, \gamma \rangle = 4 | [\Phi,\gamma] |^2  \geq 0.
\]
In particular, $M_\Phi \gamma =0$ if and only if $[\Phi,\gamma]=0$.
\end{lemma}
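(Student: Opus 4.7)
The plan is to reduce the identity to a pointwise algebraic statement and then exploit the fact that with respect to the Hilbert--Schmidt inner product $\langle A,B\rangle=\Tr(AB^*)$ on matrices, the adjoint of $\ad_\varphi$ is $\ad_{\varphi^*}$. This makes $-i\ast M_\Phi$ manifestly of the shape $T^*T+\widetilde T^*\widetilde T$, and hence non-negative.

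First I would work in a local holomorphic coordinate $z$ and write $\Phi=\varphi\,dz$, $\Phi^*=\varphi^*\,d\bar z$. Because $\gamma$ is a $0$-form, $[\Phi,\gamma]=[\varphi,\gamma]\,dz$ and $[\Phi^*,\gamma]=[\varphi^*,\gamma]\,d\bar z$. Expanding the two double brackets and using $dz\wedge d\bar z=-d\bar z\wedge dz$ yields
\[
M_\Phi\gamma=\bigl([\varphi^*,[\varphi,\gamma]]+[\varphi,[\varphi^*,\gamma]]\bigr)\,d\bar z\wedge dz.
\]
Applying $-i\ast$ sends the $(1,1)$-form $d\bar z\wedge dz$ to a positive real constant (fixed by the K\"ahler normalization), so up to that constant $-i\ast M_\Phi\gamma$ equals the Hermitian matrix $[\varphi^*,[\varphi,\gamma]]+[\varphi,[\varphi^*,\gamma]]$.

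Next I would pair with $\gamma$ using the Hilbert--Schmidt product. From $\langle\varphi X,Y\rangle=\langle X,\varphi^*Y\rangle$ and $\langle X\varphi,Y\rangle=\langle X,Y\varphi^*\rangle$ one reads off $\ad_\varphi^*=\ad_{\varphi^*}$. Hence
\[
\langle[\varphi^*,[\varphi,\gamma]],\gamma\rangle=\langle[\varphi,\gamma],[\varphi,\gamma]\rangle=\bigl|[\varphi,\gamma]\bigr|^2,
\]
and symmetrically $\langle[\varphi,[\varphi^*,\gamma]],\gamma\rangle=|[\varphi^*,\gamma]|^2$. Finally, the hypothesis $\gamma^*=\gamma$ gives $[\varphi,\gamma]^*=-[\varphi^*,\gamma]$, so the two pointwise norms coincide. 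Reabsorbing the metric factor converts $|[\varphi,\gamma]|^2$ into the form-norm $|[\Phi,\gamma]|^2$, producing the claimed identity $\langle-i\ast M_\Phi\gamma,\gamma\rangle=4|[\Phi,\gamma]|^2$.

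The ``in particular'' statement is then essentially free: the identity shows that $M_\Phi\gamma=0$ forces $[\Phi,\gamma]=0$, while the converse follows because $[\varphi,\gamma]=0$ implies, upon taking Hermitian adjoints and using $\gamma^*=\gamma$, that $[\varphi^*,\gamma]=0$ as well, so both iterated brackets in $M_\Phi\gamma$ vanish. The only real obstacle is bookkeeping the numerical constants that arise from the Hodge star, from $dz\wedge d\bar z$, and from the normalization of $|[\Phi,\gamma]|^2$ as a $(1,0)$-form norm; these must be tracked consistently to land on the precise factor $4$.
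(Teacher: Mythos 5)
Your argument is correct and is essentially the paper's own proof: a local computation with $\Phi=\varphi\,dz$, the adjointness $\langle[H,A],B\rangle=\langle A,[H^*,B]\rangle$ for $\langle A,B\rangle=\Tr(AB^*)$, and $\gamma^*=\gamma$ to identify $|[\varphi,\gamma]|^2$ with $|[\varphi^*,\gamma]|^2$. The only piece you leave as ``bookkeeping'' is exactly what the paper does to pin down the factor $4$, namely using $2i\ast 1=-dz\wedge d\bar z$ and $|dz\wedge d\bar z|^2=4$, so nothing essential is missing.
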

\begin{proof}
Fix a local holomorphic coordinate $z$ so that $\Phi = \varphi\, dz$, hence $\Phi^* = \varphi^* d\bar z$. Then
\begin{multline*}
[\Phi^* \wedge [ \Phi,\gamma ]] = - [\varphi^*, [\varphi, \gamma ]] \,dz \wedge d \bar z,  \\\mbox{and} \quad
-[\Phi \wedge [ \Phi^*,\gamma ]] = - [\varphi, [\varphi^*, \gamma ]] \, dz \wedge d \bar z,
\end{multline*}
so that 
\[
M_\Phi \gamma = -([\varphi^*, [\varphi, \gamma ]] + [\varphi, [\varphi^*, \gamma ]]) \, dz \wedge d \bar z.
\]
We use the Hermitian inner product $\langle A, B \rangle = \Tr AB^*$ on $\mf{sl}(2,\C)$. Its $\ad$-invariance 
yields that $\langle [H,A],B \rangle = \langle A, [H^*,B] \rangle$ whenever $A, B, H\in\mf{sl}(2,\C)$. Therefore
\[
\langle [ \varphi^*, [ \varphi, \gamma ]], \gamma  \rangle =  | [ \varphi, \gamma]|^2 \quad \mbox{and}\quad
\langle [ \varphi, [ \varphi^*, \gamma ]], \gamma  \rangle =  | [ \varphi^*, \gamma]|^2 =  | [ \varphi, \gamma]|^2,
\]
and since $2i\ast 1=-dz \wedge d \bar z$, we deduce that
\[
\langle M_\Phi \gamma, i \ast \gamma \rangle = |[\varphi, \gamma]|^2  |dz \wedge d\bar z|^2 = 4  |[\varphi, \gamma]|^2,
\]
as claimed.
\end{proof}

In parallel with this discussion, fix $\varphi\in\mf{sl}(2,\C)$ and consider the operator
\begin{equation*}
M_\varphi: i\su(2) \to i\su(2),\quad\gamma \mapsto 2( [ \varphi^*, [ \varphi, \gamma]] + [\varphi, [ \varphi^*, \gamma]]).
\end{equation*}
Calculating as above, 
\begin{equation}\label{emmvarvieh}
\langle M_\varphi \gamma, \gamma \rangle = 2 |[\varphi,\gamma]|^2 + 2 |[\varphi^*,\gamma]|^2 = 4 |[\varphi,\gamma]|^2.
\end{equation}
Clearly $M_\varphi$ is Hermitian with respect to $\langle \cdot \,, \cdot \rangle$ and satisfies 
$g^{-1}(M_\varphi\gamma)g=M_{g^{-1}\varphi g}g^{-1}\gamma g$ when $g\in \SU(2)$.

\begin{lemma}
If $\varphi \in \mf{sl}(2,\C)$, then $M_\varphi : i \su_2 \to i \su_2$ is invertible if and only if $[\varphi, \varphi^*] \neq 0$. 
If $[\varphi,\varphi^*]=0$ for some $0 \neq \varphi \in \mf{sl}(2,\C)$, then $M_\varphi$ has a one-dimensional kernel.
\end{lemma}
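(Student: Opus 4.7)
The plan is to leverage the identity \eqref{emmvarvieh} together with the fact that $M_\varphi$ is Hermitian to reduce the whole question to understanding the kernel. Since $M_\varphi\geq 0$ and $\langle M_\varphi\gamma,\gamma\rangle = 4|[\varphi,\gamma]|^2$, an element $\gamma\in i\su(2)$ lies in $\ker M_\varphi$ if and only if $[\varphi,\gamma]=0$. Moreover, since $\gamma\in i\su(2)$ means $\gamma^*=\gamma$, the Jacobi-like identity $[\varphi,\gamma]^* = -[\varphi^*,\gamma]$ shows that $[\varphi,\gamma]=0$ is equivalent to $[\varphi^*,\gamma]=0$. So the question reduces to: when does there exist a nonzero trace-free Hermitian $\gamma$ commuting with both $\varphi$ and $\varphi^*$?

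For the forward direction, I would argue by contraposition. Suppose $\ker M_\varphi$ is nontrivial and pick $0\neq\gamma\in i\su(2)$ with $[\varphi,\gamma]=[\varphi^*,\gamma]=0$. Being a nonzero trace-free Hermitian $2\times2$ matrix, $\gamma$ has two distinct real eigenvalues, so it is regular semisimple. Consequently its centralizer in $\mf{sl}(2,\C)$ is a one-dimensional (hence abelian) complex subalgebra. Since $\varphi$ and $\varphi^*$ both lie in this centralizer, they commute, i.e.\ $[\varphi,\varphi^*]=0$.

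For the converse, assume $\varphi\neq 0$ and $[\varphi,\varphi^*]=0$. Then $\varphi$ is normal, hence unitarily diagonalizable. Combined with $\Tr\varphi=0$, we can find $U\in\SU(2)$ so that $U^{-1}\varphi U = \mathrm{diag}(\lambda,-\lambda)$ for some $\lambda\in\C$. A normal nilpotent matrix vanishes, so $\lambda\neq 0$. The centralizer in $i\su(2)$ of this diagonal matrix is precisely the set of trace-free diagonal Hermitian matrices $\mathrm{diag}(a,-a)$ with $a\in\R$, a one-dimensional real subspace. Conjugating back by $U$ (which preserves $i\su(2)$ and commutes with the $\SU(2)$-equivariance of $M_\varphi$) gives that $\ker M_\varphi$ is one-dimensional over $\R$.

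The main obstacle, if there is one, is merely keeping straight which statements hold over $\R$ versus $\C$: the centralizer of a regular semisimple element of $\mf{sl}(2,\C)$ is one-dimensional over $\C$ (used in the forward direction), while the centralizer of a trace-free Hermitian matrix inside $i\su(2)$ is one-dimensional over $\R$ (used in the converse). Everything else is elementary linear algebra on $2\times 2$ matrices.
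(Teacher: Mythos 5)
Your proof is correct and follows essentially the same route as the paper: both reduce membership in $\ker M_\varphi$ to the condition $[\varphi,\gamma]=0$ via the identity \eqref{emmvarvieh}, exploit that a nonzero element of $i\su(2)$ has distinct eigenvalues to force $\varphi$ to be normal, and in the converse diagonalize the normal $\varphi$ unitarily to exhibit the one-dimensional kernel. Your phrasing of the forward step through the abelian centralizer of a regular semisimple element is just a mild repackaging of the paper's simultaneous unitary diagonalization argument.
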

\begin{proof}
Assume first that $\ker M_\varphi \neq \{0\}$. According to \cref{emmvarvieh}, there exists $\gamma \in i \su(2)$, $\gamma \neq 0$,
such that $[\varphi,\gamma]=0$. Since $\gamma$ has two distinct eigenvalues, there must exist a unitary basis in 
terms of which both $\gamma$ and $\varphi$ are diagonal. In particular, $\varphi$ is normal, i.e.\ $[\varphi,\varphi^*]=0$. 
Conversely, if $\varphi$ is normal, then $\ker M_\varphi=\{ \gamma \in i\su(2) : [\varphi,\gamma]=0\}$ is non-trivial, and 
this kernel is one-dimensional when $\varphi \neq 0$.
\end{proof}

Now take $\varphi$ to be the fiducial Higgs field,
\[
\varphi = \varphi_t^\fid=\begin{pmatrix} 0 & |z|^{\frac 12}e^{h_t(|z|)} \\ |z|^{\frac 12}e^{i\theta} e^{-h_t(|z|)} & 0 \end{pmatrix}.
\]

\begin{lemma}\label{lem:MtL2bound}
There is a uniform bound
\[
\sup_{z \in D_1(0)} |\varphi^\fid_t(z)| \leq C
\]
for some constant $C>0$.
\end{lemma}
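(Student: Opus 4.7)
The plan is to reduce the problem to bounding the two nonzero entries of $\varphi_t^\fid$ and then exploit the ODE relationship between $f_t$ and $h_t$ from equation \eqref{ffromh}. Since
\[
|\varphi_t^\fid(z)|^2 = r\bigl(e^{2h_t(r)} + e^{-2h_t(r)}\bigr) = 2r \cosh(2h_t(r)),\qquad r=|z|,
\]
it is enough to show that $r^{1/2} e^{\pm h_t(r)}$ is bounded on $D_1(0)$ uniformly in $t\ge t_0>0$.

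The key observation is that \eqref{ffromh} reads $r\partial_r h_t(r) = 4 f_t(r) - \tfrac12$, and part (a) of \cref{f_t-h_t-function} supplies the two-sided bound $0\le f_t(r)\le \tfrac18$. Together these give
\[
-\tfrac{1}{2r} \;\le\; \partial_r h_t(r)\;\le\; 0 \qquad (r>0).
\]
Integrating this inequality from a given $r\in(0,1)$ up to $r=1$ yields the sandwich
\[
h_t(1) \;\le\; h_t(r) \;\le\; h_t(1) - \tfrac{1}{2}\log r.
\]

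Plugging these bounds in gives
\[
r^{1/2} e^{h_t(r)} \;\le\; r^{1/2}\cdot e^{h_t(1)} r^{-1/2} \;=\; e^{h_t(1)},\qquad r^{1/2} e^{-h_t(r)} \;\le\; r^{1/2} e^{-h_t(1)} \;\le\; e^{-h_t(1)}
\]
for all $r\in(0,1]$. It remains only to control $h_t(1)=\psi(\tfrac{8t}{3})$ uniformly in $t$. But $\psi\ge 0$ and $\psi$ is monotonically decreasing by \eqref{proph}, so for $t\ge t_0>0$ we have $0\le h_t(1)\le \psi(\tfrac{8t_0}{3})$, a $t$-independent constant. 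Taking $C=2\bigl(e^{\psi(8t_0/3)} + 1\bigr)^{1/2}$ then gives the asserted uniform bound.

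I do not anticipate a serious obstacle: the only subtlety is the dependence on $t$, which is why one should emphasize that the uniform bound holds on a range $t\ge t_0>0$ (the case of small $t$ is not relevant for the gluing construction anyway, since there $\Phi_t^\fid$ diverges as noted in the remark after \cref{gau.orb.fid}). All estimates used are elementary once \cref{f_t-h_t-function} is in hand, and the argument has the advantage of avoiding any case split between the inner ($\rho=\tfrac{8}{3}tr^{3/2}$ small) and outer ($\rho$ large) regions.
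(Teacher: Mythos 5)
Your proof is correct, but it takes a genuinely different route from the paper's. The paper substitutes $\rho=\tfrac{8}{3}tr^{3/2}$ and bounds the two entries in the form $\bigl(\tfrac{3}{8}t^{-1}\rho\bigr)^{1/3}e^{\pm\psi(\rho)}$ on $0\leq\rho\leq\tfrac{8t}{3}$ directly from the asymptotics \eqref{proph}: the expansion $\psi\sim-\log(a_0\rho^{1/3})+\dots$ as $\rho\searrow0$ kills the apparent singularity of $e^{\psi}$, and exponential decay handles $\rho\to\infty$. You instead convert \eqref{ffromh} together with $0\leq f_t\leq\tfrac18$ from \cref{f_t-h_t-function}(a) into the differential inequality $-\tfrac{1}{2r}\leq\partial_rh_t\leq0$, integrate from $r$ to $1$ to sandwich $h_t$, and then only need $\psi\geq0$ and monotonicity to control $h_t(1)=\psi(\tfrac{8t}{3})$ via \eqref{sch}. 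What your argument buys: it bypasses any further appeal to the precise $\rho\searrow0$ expansion (beyond what is already packaged in \cref{f_t-h_t-function}), it makes the necessary restriction $t\geq t_0>0$ explicit — which is indeed unavoidable, since the remark after \cref{gau.orb.fid} shows $\Phi_t^{\fid}$ diverges as $t\to0$, and the paper's own bound near $\rho=0$ carries the same hidden $t^{-1/3}$ factor — and it even gives a bound improving as $t\to\infty$ (since $\psi(\tfrac{8t}{3})\to0$). What the paper's version buys is brevity: a single substitution and a two-line symmetric treatment of both entries straight from the Painlev\'e asymptotics. Your final constant is not optimal but is certainly a valid upper bound, so nothing needs fixing.
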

\begin{proof}
As in \cref{des.fid.sol}, substitute $|z|^{\frac 12}=(\frac{3}{8}t^{-1}\rho)^{\frac 13}$. Uniform boundedness of the upper 
right entry $|z|^{\frac{1}{2}}e^{h_t}(|z|)$, $0\leq|z|\leq1$, is equivalent to uniform boundedness of the function
\begin{eqnarray*}
\rho\mapsto\big(\frac{3}{8}t^{-1}\rho\big)^{\frac{1}{3}}e^{\psi(\rho)}, \qquad 0\leq\rho\leq\frac{8t}{3}, 
\end{eqnarray*}
where $\psi$ is the function appearing in \eqref{sceqn}. 
Since $\psi$ decays exponentially as $\rho\to\infty$, it suffices to show that this map is also bounded for $\rho\to0$.
This follows easily from the asymptotic expansion \eqref{proph}. Uniform boundedness of the lower left entry amounts 
to boundedness of the function
\begin{eqnarray*}
\rho\mapsto\big(\frac{3}{8}t^{-1}\rho\big)^{\frac{1}{3}}e^{-\psi(\rho)}
\end{eqnarray*}
on the same interval, which can be proved as above. 
\end{proof}
Finally, we state the 

\begin{corollary}\label{emmviehtee}
There is a constant $C > 0$ such that 
\[
\sup_{z \in D_1(0)} |M_{\varphi^\fid_t(z)}| \leq C.
\]
\end{corollary}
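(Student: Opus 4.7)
The plan is essentially to observe that the corollary is an immediate consequence of the preceding lemma together with the explicit algebraic form of $M_\varphi$.

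First, I would unpack the definition
\[
M_\varphi\gamma = 2\bigl([\varphi^{\ast},[\varphi,\gamma]] + [\varphi,[\varphi^{\ast},\gamma]]\bigr),
\]
which shows that, regarded as a linear map on $i\su(2)$, the operator $M_\varphi$ depends bilinearly on the pair $(\varphi,\varphi^{\ast})$, i.e.\ it is a homogeneous polynomial of total degree $2$ in the entries of $\varphi$. Using the submultiplicativity of the operator norm on $2 \times 2$ matrices (or equivalently the elementary estimate $|[A,B]| \leq 2|A|\,|B|$), one obtains a universal pointwise bound of the form
\[
|M_\varphi \gamma| \leq c_0\,|\varphi|^2\,|\gamma|
\]
for some absolute constant $c_0>0$, hence $|M_\varphi| \leq c_0\,|\varphi|^2$ as an endomorphism of $i\su(2)$.

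Next, I would simply invoke \cref{lem:MtL2bound}: there is a constant $C_1$, independent of $t$, with $|\varphi^\fid_t(z)| \leq C_1$ uniformly for $z \in D_1(0)$. Combining the two estimates gives
\[
\sup_{z \in D_1(0)}|M_{\varphi^\fid_t(z)}| \leq c_0\sup_{z \in D_1(0)}|\varphi^\fid_t(z)|^2 \leq c_0 C_1^2 =: C,
\]
which is the desired conclusion. There is no real obstacle here — the only mildly nontrivial ingredient has already been done in \cref{lem:MtL2bound}, where the uniform boundedness required an asymptotic analysis of $\psi(\rho)$ at both $\rho\searrow 0$ and $\rho\nearrow\infty$. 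The present corollary is just the algebraic consequence of that fact.
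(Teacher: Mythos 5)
Your argument is correct and matches the paper's intent: the paper states the corollary immediately after \cref{lem:MtL2bound} with no separate proof, treating it as exactly this immediate consequence of the quadratic dependence $|M_\varphi|\leq c_0|\varphi|^2$ together with the uniform bound on $|\varphi^\fid_t|$. Nothing further is needed.
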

%
\subsection{Local analysis of the linearization at a fiducial solution}\label{lin.op.fid}
In this section we analyze the linear operator $L_t$ on the disk $D = D_1(0)$, computed relative to a fiducial pair $(A_t^\fid,\Phi_t^\fid)$,
with the goal of determining sharp bounds for the norm of its inverse $G_t$. In what follows, we often omit the bundles from the 
function spaces. We also replace the $H^2$ norm with the equivalent graph norm for the standard Laplacian 
$\Delta=-((r\partial_r)^2+\partial^2_\theta)/r^2$, i.e.
\[
\|u \|_\Delta^2=\|u \|^2_{L^2}+\|\Delta u\|^2_{L^2}.
\]
We consider both $\Delta$ and 
\[
L_t:=\Delta_{A_t^\fid}+t^2M_{\Phi_t^\fid}
\]
with Dirichlet boundary conditions, or equivalently,
on the common domain $H^2(D) \cap H^1_0(D)$. Because of the nonnegativity of $t^2 M_{\Phi_t^\fid}$ and the positivity of the leading part, 
it is clear that
\[
L_t\colon H^2(D)\cap H^1_0(D)\to L^2(D)
\]
is injective, and since it is also self-adjoint, it is an isomorphism. Thus it has an inverse 
\[
G_t:=L_t^{-1}\colon L^2(D)\to H^2(D)\cap H^1_0(D).
\]
We are interested in understanding the norm of this inverse as $t \nearrow \infty$. We do this by reducing
$L_t$ to a family of ordinary differential operators. 

Trivialize the bundle $i\mf{su}(E)$ by the constant sections $\{\sigma_1=i\tau_1,\sigma_2=i\tau_2,\sigma_3=i\tau_3\}$,
cf.~\cref{sta.bas.su2}, so $[\tau_1,\sigma_1]=0$, $[\tau_1,\sigma_2]=2\sigma_3$ and $[\tau_1,\sigma_3]=-2\sigma_2$. Now 
consider the decomposition
\[
i\mf{su}(E)=\langle\sigma_1\rangle\oplus\langle\sigma_2,\sigma_3\rangle=:iV\oplus iV^\perp,
\]
where $iV = \mathrm{span}\,\{\sigma_1\}$ and orthogonality is with respect to $\langle A,B\rangle=\operatorname{tr}(AB)$ 
on $i\mf{su}(2)$. This splitting is parallel for the connection $A_t^\fid=2f_t\tau_1d\theta$. The restriction of
$\Delta_{A_t^\fid}$ to $iV$ is the scalar Laplacian, whereas 
\[
\left. \Delta_{A_t^\fid} \right|_{iV^\perp} =-\frac{1}{r^2}\left((r\partial_r)^2+\partial_\theta^2+
\begin{pmatrix}
-16f_t^2&-8f_t\partial_\theta\\8f_t\partial_\theta&-16f_t^2
\end{pmatrix}\right)
\]
acting on pairs $(a_2,a_3)^\top=a_2\sigma_2+a_3\sigma_3$. Conjugating by $M=\begin{pmatrix}1&1\\i&-i\end{pmatrix}$ 
provides a decoupling: 
\begin{align*}
M^{-1}\circ\Delta_{A_t^\fid}|_{V^\perp}\circ M&=-\frac{1}{r^2}\left((r\partial_r)^2+\partial^2_\theta+
\begin{pmatrix}
-8if_t\partial_\theta-16f_t^2&0\\0&8if_t\partial_\theta-16f_t^2
\end{pmatrix}
\right)\\
&= -\frac{1}{r^2} \left(  (r \partial_r)^2 + \begin{pmatrix}
 (\partial_\theta - 4i f_t)^2 & 0 \\ 0 & (\partial_\theta + 4i f_t)^2
 \end{pmatrix}
\right).
\end{align*}
This is reduced further by restricting to the Fourier modes $\{\phi_\ell\}_{\ell\in\Z}$, leading to the family of operators
\begin{equation}\label{eq:FourierexpansionP}
P_{\ell,t}^\pm=-\frac{1}{r^2}(r\partial_r)^2+\frac{1}{r^2}(\ell\pm 4f_t)^2.
\end{equation}
As for the potential, with respect to the basis $\{\sigma_2, \sigma_3\}$, 
\[
\left. M_{\varphi^\fid_t}\right|_{iV^\perp}=8 
\begin{pmatrix}
|z|\cosh(2 h_t) + \Re z & -\Im z\\
- \Im z & |z|\cosh(2 h_t) - \Re z 
\end{pmatrix},
\]
so 
\[
\left. M^{-1} \circ M_{\varphi^\fid_t}\right|_{iV^\perp} \circ M = 8 
\begin{pmatrix}
|z| \cosh (2 h_t) & z \\
\bar z & |z| \cosh(2 h_t)
\end{pmatrix}.
\]

These calculations show that we can reduce $L_t$ to the subspaces
\[
E_\ell = \langle\varphi_\ell\sigma_2,\varphi_{\ell - 1}\sigma_3\rangle\cong L^2((0,1),rdr)\oplus L^2((0,1),rdr).
\]
To collect all these decompositions in one place, we have reductions of the standard Laplacian:
\[
P:= \Delta = \bigoplus_{\ell \in \Z} \begin{pmatrix} P_\ell & 0 \\ 0 &  P_{\ell-1} \end{pmatrix}, \qquad 
P_\ell = -\frac{1}{r^2}(r \partial_r)^2 + \frac{\ell^2}{r^2},
\]
the connection Laplacian: 
\[
P_t:=M^{-1} \circ \Delta_{A_t^\fid} \circ M = \bigoplus_{\ell \in \Z} \begin{pmatrix} P_{\ell,t}^- & 0 \\ 0 & P_{\ell-1,t}^+ \end{pmatrix},
\]
and finally $L_t = \bigoplus L_{\ell,t}$, where 
\begin{multline*}
L_{\ell,t}:=M^{-1} \circ L_t \circ M \vert_{E_\ell}
=  \begin{pmatrix}
 P_{\ell,t}^- & 0 \\ 0 & P_{\ell-1,t}^+ \end{pmatrix} +8 t^2 r \begin{pmatrix}
 \cosh (2 h_t) & 1 \\
 1 & \cosh(2 h_t).
\end{pmatrix}
\end{multline*}
The operators $L_{\ell,t}$ are self-adjoint when we impose Dirichlet boundary conditions at $r=1$ and the condition
that solutions be bounded at $r=0$. 

We now use these reductions, and the fact that $L^2(D)=\bigoplus_{\ell \in \Z} E_\ell$,  to prove the 

\begin{proposition}\label{inverse}
There exists a constant $C>0$ such that
\begin{enumerate}[1.]
\item $\|G_t \|_{\mathcal{L}(L^2,L^2)}\leq C$.
\item $\|G_t\|_{\mathcal{L}(L^2,H^2)}\leq Ct^2$.
\end{enumerate}
\end{proposition}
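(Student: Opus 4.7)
The two bounds are connected by elliptic regularity, so I first prove Part 1 and then deduce Part 2 by bootstrapping.

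\textbf{Part 1 (uniform $L^2\to L^2$ bound).} My plan is to prove $L_t\geq c\,\mathrm{Id}$ as quadratic forms with $c>0$ independent of $t$; since $L_t$ is self-adjoint and positive, this immediately gives $\|u\|_{L^2}\leq c^{-1}\|L_t u\|_{L^2}$. Because the potential $t^2 M_{\Phi_t^\fid}$ is pointwise nonnegative (the lemma preceding \cref{emmviehtee} gives $\langle M_\Phi\gamma,i\ast\gamma\rangle=4\,|[\Phi,\gamma]|^2$), it suffices to bound $\Delta_{A_t^\fid}$ from below uniformly in $t$. Using the Fourier/matrix decomposition set up in this subsection, this reduces to the scalar Bessel-type operators
\[
P_{\ell,t}^\pm=-\tfrac1{r^2}(r\partial_r)^2+\tfrac{(\ell\pm 4f_t(r))^2}{r^2},\qquad \ell\in\Z,
\]
on $L^2((0,1),r\,dr)$ with Dirichlet conditions at $r=1$. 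By \cref{f_t-h_t-function}(a), $f_t\in[0,1/8]$, so $(\ell\pm 4f_t(r))^2\geq\nu_\ell^2$ with $\nu_\ell:=\max(0,|\ell|-\tfrac12)$ pointwise in $r$. A quadratic-form comparison then gives $P_{\ell,t}^\pm\geq B_{\nu_\ell}$, the standard Bessel operator of constant index $\nu_\ell$, whose smallest Dirichlet eigenvalue on $(0,1)$ is the universal positive constant $j_{\nu_\ell,1}^2$. The infimum over $\ell\in\Z$ and the sign is attained at $\ell=0$ and equals $j_{0,1}^2>0$, which yields the uniform bound for all of $\Delta_{A_t^\fid}$ (the $iV$-piece is already the scalar Laplacian and has the same lower bound).

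\textbf{Part 2 (the $L^2\to H^2$ bound).} Given $L_t u=f$, I write
\[
\Delta u=L_t u+i\ast t^2 M_{\Phi_t^\fid}u+\bigl(\Delta-\Delta_{A_t^\fid}\bigr)u
\]
and take $L^2$-norms. The zeroth-order term is controlled by \cref{emmviehtee}: $\|t^2 M_{\Phi_t^\fid}u\|_{L^2}\leq C t^2\|u\|_{L^2}$. For the commutator $\Delta-\Delta_{A_t^\fid}$, I will use the pointwise bounds $\|A_t^\fid\|_{L^\infty}\leq C t^{2/3}$ (from \cref{f_t-h_t-function}(c), since $|A_t^\fid|\lesssim f_t/r$) and $\|dA_t^\fid\|_{L^\infty}\leq C t^2$ (from the Hitchin equation $dA_t^\fid=-t^2[\Phi_t^\fid\wedge(\Phi_t^\fid)^\ast]$ and \cref{lem:MtL2bound}, noting that $A_t^\fid\wedge A_t^\fid=0$ as $A_t^\fid$ is diagonal). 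The positivity identity $\langle L_t u,u\rangle\geq\|d_{A_t^\fid}u\|_{L^2}^2$ combined with Part 1 yields $\|d_{A_t^\fid}u\|_{L^2}\leq C\|f\|_{L^2}$ and hence $\|\nabla u\|_{L^2}\leq\|d_{A_t^\fid}u\|_{L^2}+\|A_t^\fid\|_{L^\infty}\|u\|_{L^2}\leq C t^{2/3}\|f\|_{L^2}$. Collecting terms,
\[
\|\Delta u\|_{L^2}\leq\|f\|_{L^2}+C t^2\|u\|_{L^2}+C\|A_t^\fid\|_{L^\infty}\|\nabla u\|_{L^2}+C\bigl(\|dA_t^\fid\|_{L^\infty}+\|A_t^\fid\|_{L^\infty}^2\bigr)\|u\|_{L^2}\leq C t^2\|f\|_{L^2},
\]
and by the equivalence of $\|\cdot\|_{H^2}$ with the graph norm of $\Delta$ under Dirichlet boundary conditions, $\|u\|_{H^2}\leq C t^2\|f\|_{L^2}$.

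\textbf{Main obstacle.} The delicate step is Part 1: confirming that the quadratic-form comparison $P_{\ell,t}^\pm\geq B_{\nu_\ell}$ on the natural Sobolev domain goes through uniformly in $t$ despite the fact that the index shift $\pm 4f_t(r)$ itself depends on $r$ and degenerates as $r\to 0$, with particular care needed at the low Fourier modes $\ell\in\{-1,0,1\}$ where the kinetic part is weakest. Once this uniform coercivity is in hand, Part 2 is a routine bootstrap in which the $t^2$ rate is dictated by the zeroth-order potential $t^2 M_{\Phi_t^\fid}$ and by $\|dA_t^\fid\|_{L^\infty}$; all other contributions are lower order in $t$.
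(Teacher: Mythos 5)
Your Part 1 is essentially the paper's own argument: the paper drops the potential in \cref{eq:FourierexpansionP} altogether and bounds $P^\pm_{\ell,t}\geq P_0\geq\lambda$, with $\lambda$ the first Dirichlet eigenvalue of $P_0$, and then uses $M_{\Phi^\fid_t}\geq 0$; your Bessel comparison with index $\nu_\ell$ is a mild refinement of the same coercivity, and the ``main obstacle'' you flag is not really one, since it is just a pointwise comparison of potentials over a common kinetic part, valid on the Friedrichs domain. Part 2 is where you genuinely diverge. The paper stays inside the Fourier/matrix reduction: it proves the mode-wise resolvent decay $\|G_{\ell,t}\|_{\mathcal{L}(L^2,L^2)}\leq\kappa^{-1}\ell^{-2}$ for $\ell\neq 0$, bounds the potentials $V^\pm_{\ell,t}$ and $W_t$ in sup norm with at most linear growth in $\ell$, and sums over modes, the $\ell^{-2}$ decay absorbing that growth. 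You instead argue globally on the disk: the form identity $\langle L_tu,u\rangle\geq\|d_{A^\fid_t}u\|^2_{L^2}$ plus Part 1 give $\|d_{A^\fid_t}u\|_{L^2}\leq C\|f\|_{L^2}$ and hence $\|\nabla u\|_{L^2}\leq Ct^{2/3}\|f\|_{L^2}$, and the $L^\infty$ bounds from \cref{f_t-h_t-function}, \cref{lem:MtL2bound} and \cref{emmviehtee} control the potential and the commutator, after which Dirichlet elliptic regularity for $\Delta$ closes the estimate. Both routes yield the $t^2$ rate, driven by the zeroth-order term $t^2M_{\Phi^\fid_t}$; yours avoids separation of variables in this step and is somewhat more robust, while the paper's keeps everything scalar and explicit. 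One detail to tighten: in $(\Delta-\Delta_{A^\fid_t})u$ the zeroth-order coefficient is not $dA^\fid_t$ but the divergence-type derivative $d^*A^\fid_t$ together with $|A^\fid_t|^2$, and the Hitchin equation only controls $dA^\fid_t=F_{A^\fid_t}$. This is harmless here: $A^\fid_t=2if_t(r)\tau_1\,d\theta$ is radial and hence coclosed, so $d^*A^\fid_t=0$, and $|A^\fid_t|^2\leq Ct^{4/3}$ by \cref{f_t-h_t-function}, so all connection terms are of lower order than $t^2$ and your final bound stands.
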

\begin{proof}
Let $\lambda$ denote the smallest positive eigenvalue of $P_0$. Thus
\[
\langle P_{\ell,t}^\pm \psi,\psi \rangle_{L^2} = \langle (P_0 + r^{-2}(\ell \pm 2 f_t)^2)\psi, \psi\rangle_{L^2} \geq 
\langle P_0 \psi, \psi\rangle_{L^2} \geq \lambda \|\psi\|_{L^2}^2
\]
for all $\psi \in \calC_0^\infty(0,1)$, and hence in the Friedrichs domain.

Now denote by $Q_{\ell,t}^\pm$ and $Q_t$ the inverses of $P_{\ell,t}$ and $P_t$, respectively.  We have that
$\|Q_{\ell,t}\|_{\mathcal{L}(L^2,L^2)} \leq \lambda^{-1}$ for all $\ell$ and $t$, so if $v=\sum_{\ell\in\Z}v_{\ell}\varphi_{\ell} \in L^2(B)$, then
\[
\|Q_t v\|_{L^2}^2 = \sum_{\ell \in \Z} \|Q_{\ell,t}v_\ell\|^2_{L^2} \leq  \lambda^{-2} \sum_{\ell \in \Z}  \|v_\ell\|_{L^2}^2=\lambda^{-2}\|v\|_{L^2}^2.
\]
However, $M_{\Phi_t^\fid} \geq 0$, so $P_t \leq L_t$ and therefore $\|G_t\|_{\mc L(L^2,L^2)} \leq \|Q_t\|_{\mc L(L^2,L^2)}$. This proves
the first part. 

It remains to show that $\|\Delta G_t v \|_{L^2} \leq C t^2\|v\|_{L^2}$ for all $v \in L^2(B)$. First write
\[
L_{\ell,t}-\Delta\vert_{E_\ell}=
\begin{pmatrix}
V_{\ell,t}^- & 0 \\ 0 & V_{\ell-1,t}^+
\end{pmatrix}+ W_t=:V_{\ell,t}+W_t,
\]
where 
\[
V_{\ell,t}^\pm := \frac{(\ell \pm 4f_t)^2 - \ell^2}{r^2} = \frac{16f_t^2 \pm 8\ell f_t}{r^2}, \qquad 
W_t:= 8t^2 r \begin{pmatrix}
\cosh (2 h_t) & 1 \\1 & \cosh(2 h_t)
\end{pmatrix}.
\]
Also set $G_{\ell,t}:=L_{\ell,t}^{-1}$. 

When $\ell \neq 0$, the potentials $r^{-2}( \ell \pm 2 f_t)^2$ are bounded below by $\kappa \ell^2$ for $0 < r < 1$
where $\kappa>0$ is independent of $\ell$ and $t$, cf.\ \cref{f_t-h_t-function}, and $W_t \geq 0$.  
Hence for these values of $\ell$,  
\[
\langle P_{\ell,t}^\pm \psi,\psi \rangle_{L^2} \geq \kappa \ell^2 \|\psi\|_{L^2}^2, \qquad \psi \in C_0^\infty(0,1),
\]
and so 
\begin{eqnarray}\label{eq:estGellt}
\|G_{\ell,t}\|_{\mathcal{L}(L^2,L^2)} \leq \kappa^{-1}\ell^{-2}
\end{eqnarray}

Now use \cref{f_t-h_t-function} to deduce the bounds 
\begin{eqnarray*}
\sup_{r \in (0,1)}|V_{\ell,t}^\pm(r)| \leq \begin{cases}
Ct^{4/3}, &  \quad \ell=0, \\
C\ell t^{4/3},& \quad \ell \neq 0. 
\end{cases}
\end{eqnarray*}
and 
\[
\sup_{r \in (0,1)} |W_t(r)| \leq C t^{4/3}. 
\]
Together with \eqref{eq:estGellt}, for $t\geq1$, we see that 
\begin{multline*}
\| \Delta L_t v \|_{L^2}^2 \leq \|(M^{-1}\circ L_t\circ M-\Delta)G_tv\|_{L^2}^2  + \|M^{-1}\circ L_t\circ MG_tv\|_{L^2}^2 \\[0.5ex]
= \sum_{\ell \in \Z} \|( V_{\ell,t}+W_t) G_{\ell,t} v_\ell\|
^2_{L^2} \leq Ct^4\sum_{\ell \in \Z} (1+\ell)^2 \|G_{\ell,t}v_{\ell}\|^2_{L^2} + \|v\|_{L^2}^2\\[0.5ex]
\leq Ct^4 \sum_{\ell \in \Z} (\frac{1}{\ell}+\frac{1}{\ell^2})^2 \|v_\ell\|^2_{L^2} \leq Ct^4\|v\|_{L^2}^2
\end{multline*}
where $C$ is independent of $t$. 
\end{proof}

\begin{corollary}\label{cor:estLaplinverse}
For all $u \in H^2(D)\cap H^1_0(D)$, we have $\|u\|_{H^2} \leq Ct^2\|u \|_{L_t}$,
where $\|u\|_{L_t}$ is the graph norm for the operator $L_t$. 
\end{corollary}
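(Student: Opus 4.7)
The plan is to derive this as an immediate consequence of part~2 of \cref{inverse}. Given $u \in H^2(D) \cap H^1_0(D)$, set $v := L_t u \in L^2(D)$. Since $L_t$ is an isomorphism from $H^2(D)\cap H^1_0(D)$ to $L^2(D)$ with inverse $G_t$, we have $u = G_t v$, and therefore
\[
\|u\|_{H^2} = \|G_t v\|_{H^2} \leq \|G_t\|_{\mathcal{L}(L^2,H^2)} \, \|v\|_{L^2} \leq C t^2 \|L_t u\|_{L^2}
\]
by \cref{inverse}(2). This is the main content of the estimate.

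To finish, I would observe that trivially $\|L_t u\|_{L^2} \leq (\|u\|_{L^2}^2 + \|L_t u\|_{L^2}^2)^{1/2} = \|u\|_{L_t}$, so the above bound upgrades to
\[
\|u\|_{H^2} \leq C t^2 \|u\|_{L_t},
\]
which is the claim. (Replacing $L_t u$ by the full graph norm $\|u\|_{L_t}$ only weakens the right-hand side, so the inequality is preserved.)

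There is no real obstacle here, since all the work is already done in \cref{inverse}; the corollary is just the reformulation of the bound on $\|G_t\|_{\mathcal{L}(L^2,H^2)}$ in terms of $u$ rather than in terms of $v = L_t u$. If one wants to be slightly more careful, one may note that the $H^2$-norm used on the left is equivalent to the graph norm $\|u\|_\Delta^2 = \|u\|_{L^2}^2 + \|\Delta u\|_{L^2}^2$ (as stated at the beginning of \cref{lin.op.fid}), and the proof of \cref{inverse} in fact established both $\|G_t v\|_{L^2} \leq C \|v\|_{L^2}$ and $\|\Delta G_t v\|_{L^2} \leq C t^2 \|v\|_{L^2}$, which together yield $\|G_t v\|_\Delta \leq C t^2 \|v\|_{L^2}$ and hence the claimed bound.
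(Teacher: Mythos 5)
Your proposal is correct and is exactly how the paper intends the corollary to follow: it is the immediate reformulation of \cref{inverse}(2) via $u = G_t L_t u$, with the trivial observation that $\|L_t u\|_{L^2} \leq \|u\|_{L_t}$. The paper gives no separate argument beyond this, so your proof matches the intended one.
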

%
%
%
\section{Gluing construction}\label{sec:glue.const}
We are now in a position to prove the main gluing theorem. The strategy is the 
standard one: we construct a family of approximate solutions to $F_t(\gamma) = 0$, 
then use the invertibility of the linearized operator to perturb these approximate solutions
to exact solutions. 
%
\subsection{Approximate solutions}\label{appr}
Let $H(E)$ denote the bundle of Hermitian elements in $\SL(E)$.  Now consider the map
\begin{align*}
&F_t\colon H^2(H(E))\to L^2(\Lambda^2\otimes\mf{su}(E)),
\\
&\qquad \qquad F_t(g)=F_{A_{\infty}^g}^\perp+t^2[\Phi_{\infty}^g\wedge(\Phi_{\infty}^g)^*],
\end{align*}
computed at a limiting configuration $(A_{\infty},\Phi_{\infty})$. Write $X^\interior=\bigcup_{p\in\mf p}D^\times_1(p)$ for the 
union of the punctured discs, and assume that $(A_{\infty},\Phi_{\infty})$ is in fiducial form in each of these. 
To be concrete, assume that the radii are all equal to one.  We also set $X^\exterior = X \setminus \bar X^\interior$. 

Define the family of complex gauge transformations  
$$
g_t=\exp(\gamma_t), \qquad \gamma_t=\begin{pmatrix}-\tfrac{1}{2}h_t&0\\0&\tfrac{1}{2}h_t\end{pmatrix}
$$
on $X^{\interior}$; by \cref{gau.orb.fid}, 
$$
(A_t^{\fid},\Phi_t^{\fid})=(A^\fid_{\infty},\Phi^\fid_{\infty})^{g_t}
$$
on $X^\interior$. Our approximate solution is obtained by gluing $(A_t^{\fid},\Phi_t^{\fid})$ on $X^\interior$ to 
$(A_{\infty},\Phi_{\infty})$ on $X^{\exterior}$. Thus, choose a smooth cut-off function $\chi\colon X\to[0,1]$ with 
$\operatorname{supp}\chi\subseteq X^{\interior}$ and $\chi(z)\equiv1$ for $z\in\bigcup_{p\in\mathfrak p}D_{1/2}(p)$. Then
\begin{equation}\label{app.gam.t}
g_t^{\appr}(z):=\exp(\chi\gamma_t)
\end{equation}
is a family of smooth gauge transformations on $X^\times$ with
$$
g_t^\appr=g_t\mbox{ on }\bigcup_{p\in\mathfrak p}D_{1/2}(p)\mbox{ and }g_t^\appr=\Id\mbox{ on }X^\exterior.
$$ 
The new pair 
$$
(A_t^{\appr},\Phi_t^{\appr}):=(A_{\infty},\Phi_{\infty})^{g_t^{\appr}}
$$
is smooth and coincides with the fiducial solution $(A_t^{\fid},\Phi_t^{\fid})$ on  $\bigcup_{p\in\mathfrak p}D_{1/2}(p)$, and with $(A_{\infty},\Phi_{\infty})$ on $X^{\exterior}$. 

We claim that if the limiting configuration $(A_\infty,\Phi_\infty)$ is constructed from an initial pair $(A,\Phi)$, as in \cref{lim.conf}, then $(A_t^{\appr},\Phi_t^{\appr})$ 
is complex gauge equivalent to $(A,\Phi)$ by a smooth gauge transformation {\em defined over all of} $X$. Indeed, recall from \cref{lim.conf} that in a suitable 
holomorphic frame around a zero $p\in\mf p$ of $\det\Phi$, the connection matrix of $A$ vanishes and $\Phi$ is of the form of \cref{nor.for.zer}. To 
transform $(A,\Phi)$ into $(A_t^{\appr},\Phi_t^{\appr})$ we apply the gauge transformation
\[
G_t=g_\infty g_{\mu_p}g_{\mu_f}g_t^\appr
\]
where
\begin{itemize}
	\item $g_\infty$ is a normalizing gauge transformation which puts $(A,\Phi)$ into fiducial form on a neighbourhood of the zeroes of $\det\Phi$. It is obtained 
by using \cref{ext.lem} to extend the locally defined gauge transformation $g_\infty$ from \cref{gau.orb.fid} to a smooth normalizing gauge transformation on $X^\times$.
	\item $g_{\mu_p}=\exp(\gamma_{\mu_p})$ is the Hermitian gauge transformation in the stabilizer of $\Phi^\fid_\infty$ which gauges away the central part of
the curvature. This is obtained by solving the Poisson equation for $\gamma_{\mu_p}$ (cf.\ \cref{equ.s.gauge} and \cref{sol.poiss.equ}).
	\item $g_{\mu_f}=\exp(\gamma_{\mu_f})$ is the unitary gauge transformation which fiducializes $A_\infty^{g_{\mu_p}}$ (cf.\ \cref{nor.form.fid}).
	\item $g_t^\appr$ is the complex gauge transformation from~\eqref{app.gam.t}.
\end{itemize}

\begin{proposition}\label{prop:continuitysinggauge}
The complex gauge transformation $G_t$ admits a smooth extension across any point $p\in\mf p$. In particular, $(A^\appr_t,\Phi^\appr_t)$ is 
complex gauge equivalent to $(A,\Phi)$ over $X$. 
\end{proposition}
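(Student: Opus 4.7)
The plan is to reduce the claim to a local statement near each puncture $p \in \mf p$, since every constituent of $G_t$ is smooth on $X^\times$. The key device is to exploit \cref{gau.orb.fid} to produce an a priori smooth diagonal gauge transformation $\tilde G_t$ accomplishing the same transformation as $G_t$ on the inner disc $D_{1/2}(p)$, and then to absorb their difference into the complex gauge stabilizer of $(A,\Phi)$.

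First I would work in the holomorphic frame and coordinate $z$ from \cref{nor.for.zer} centered at $p$, so that $(A,\Phi)$ coincides with the normal form $(A_0,\Phi_0) = (0,\begin{pmatrix} 0 & 1 \\ z & 0 \end{pmatrix} dz)$ on $D = D_1(p)$. On $D_{1/2}(p)$ one has $\chi \equiv 1$, hence $g_t^\appr = g_t$; by the stepwise construction in \cref{lim.conf}, $(A_0,\Phi_0)^{g_\infty g_{\mu_p} g_{\mu_f}} = (A_\infty^\fid,\Phi_\infty^\fid)$, and by \cref{gau.orb.fid} this pair is then sent by $g_t$ to $(A_t^\fid,\Phi_t^\fid)$, so $(A_0,\Phi_0)^{G_t} = (A_t^\fid,\Phi_t^\fid)$ on $D_{1/2}(p)^\times$. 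On the other hand, the first part of \cref{gau.orb.fid} directly furnishes a diagonal complex gauge transformation $\tilde G_t = \mr{diag}(e^{u_t}, e^{-u_t})$ defined on all of $D$, with $u_t(r) = -\tfrac14 \log r - \tfrac12 h_t(r)$, also sending $(A_0,\Phi_0)$ to $(A_t^\fid,\Phi_t^\fid)$. Using \eqref{proph} and the substitution $\rho = \tfrac{8t}{3} r^{3/2}$, one reads off
\[
e^{-h_t(r)} = r^{1/2}\bigl(a_0'(t) + a_1'(t)\,r^2 + a_2'(t)\,r^4 + \cdots\bigr),
\]
so the $r^{-1/4}$ singularity of $\tilde G_t$ is cancelled exactly and $e^{\pm u_t}$ becomes a smooth function of $z\bar z$; thus $\tilde G_t$ extends $\mc C^\infty$ across $z = 0$.

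The ratio $s := G_t \tilde G_t^{-1}$ is then a smooth section on $D_{1/2}(p)^\times$ of the complex gauge stabilizer of $(A_0,\Phi_0)$. A direct calculation identifies this stabilizer with the $\bar\partial$-holomorphic sections $g(z) = \begin{pmatrix} a(z) & b(z) \\ zb(z) & a(z) \end{pmatrix}$ satisfying $a^2 - zb^2 = 1$, which is a smooth algebraic group subvariety over the full disc $D$ (including $z=0$). A parity argument, using that $a^2 - 1 = zb^2$ and that the pole order of a square is even, shows that any such section on the punctured disc extends holomorphically across $z=0$; hence $s$ does, and $G_t = s\cdot\tilde G_t$ is smooth at $p$. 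Since all four factors of $G_t$ are smooth on the transition annulus $D \setminus D_{1/2}(p)$ (we are bounded away from $p$ there), $G_t$ is smooth on $D$, and doing this at every puncture gives the global smooth extension. The second assertion is then immediate from $(A,\Phi)^{G_t} = (A_t^\appr,\Phi_t^\appr)$, which holds on $X^\times$ by construction and persists across each puncture by continuity.

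The main obstacle in this plan is the delicate all-order singular cancellation in $\tilde G_t$: the pole $r^{-1/4}$ of $g_\infty$ must combine with the $-\tfrac12 \log r$ divergence of $h_t$ to produce a genuinely smooth, rather than merely polyhomogeneous, function of $z\bar z$. This relies on the specific structure \eqref{proph} of the Painlev\'e III solution $\psi$: its higher-order corrections occur at exponents $4j/3$, which under $\rho = \tfrac{8t}{3} r^{3/2}$ translate to even integer powers $r^{2j}$, i.e.\ to smooth functions of $(z,\bar z)$.
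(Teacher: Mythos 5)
Your reduction to the punctured disc is sound: on $D_{1/2}(p)$ one indeed has $(A_0,\Phi_0)^{G_t}=(A_t^\fid,\Phi_t^\fid)$, the diagonal transformation $\tilde G_t=\mathrm{diag}(e^{u_t},e^{-u_t})$ from \cref{gau.orb.fid} is smooth across $z=0$ by the even-power expansion coming from \eqref{proph}, and the stabilizer of $(A_0,\Phi_0)$ under the complex gauge action is correctly identified with the $\delb$-holomorphic sections $\begin{pmatrix} a & b\\ zb & a\end{pmatrix}$, $a^2-zb^2=1$. This is a genuinely different route from the paper, which instead computes the $r^{1/2}$-asymptotics of $\mu_p$ (via the indicial roots) and of $\mu_f$ (via the ODE $P\mu_f=r\partial_r\mu_p$) and checks continuity of the conjugated matrix directly, then bootstraps $dG_t=G_tA^\appr_t-AG_t$. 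However, your key removable-singularity claim is false as stated. The parity argument only excludes \emph{poles}; it says nothing about essential singularities, and the stabilizer does admit holomorphic sections over $D^\times$ that do not extend: take $a(z)=\cosh\bigl(z^{-1/2}\bigr)$ and $b(z)=-z^{-1/2}\sinh\bigl(z^{-1/2}\bigr)$, which are single-valued and holomorphic on $D^\times$ (both expressions are even under $z^{1/2}\mapsto -z^{1/2}$), satisfy $a^2-zb^2=\cosh^2-\sinh^2=1$, and have essential singularities at $0$. So "any such section on the punctured disc extends" is not true, and with it the proof of smoothness of $s=G_t\tilde G_t^{-1}$ collapses.

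To close the gap you must rule out essential singularities by a growth bound: since $\tilde G_t^{\pm1}$ is bounded near $p$, it suffices to know that $s$ (equivalently $G_t$, hence $g_{\mu_p}$ and $g_{\mu_f}$) is bounded, or even just $O(r^{-N})$, near the puncture; then the Laurent expansion is finite, your parity argument applies, and $a,b$ extend holomorphically. But that bound is precisely the asymptotic information your proposal never addresses and which the paper's proof establishes in its Steps 1 and 2: $\gamma_{\mu_p}$ lies in the weighted space of \cref{sol.poiss.equ} and is polyhomogeneous with leading exponent $r^{1/2}$ (\cref{con.op.main}), and $\mu_f$, obtained from \cref{nor.form.fid} by solving $P\mu_f=r\partial_r\mu_p$, is then likewise $O(r^{1/2})$. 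If you supply this input, your argument does go through, and it is in fact a little more robust than the paper's (you only need boundedness, not the precise matching of the leading coefficients of $\mu_p$ and $\mu_f$); without it, the step from "section of the stabilizer on $D^\times$" to "extends across $z=0$" is a genuine gap.
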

\begin{proof}
First note that we only need to prove continuity of the extension. Indeed, we can bootstrap the identity
\[
dG_t=G_tA^\appr_t-AG_t
\]
since $A^\appr_t$ and $A$ are smooth connections. Since $G_t$ is smooth on $X^\times$, the discussion is completely local. We proceed in three steps.

\begin{step}\label{coercivityD_A:step1}
The coefficient $\mu_p$ of the solution $\gamma_{\mu_p}$ (as in \cref{gamma.mu}) of the Poisson equation 
has an expansion of the form
\[
\mu_p \sim (C_0+C_1e^{-i\theta})r^{\frac{1}{2}}+O(r^{\frac{3}{2}}).
\]
\end{step}
This follows directly from the indicial root calculation for the Laplacian $\Delta_A$ in \cref{sec:ind.roots}.

\begin{step}\label{exp.muf}
The coefficient $\mu_f$ of $\gamma_f$ has
\[
\mu_f \sim (C_0-C_1e^{-i\theta})r^{\frac{1}{2}}+O(r^{\frac{3}{2}}).
\]
In particular, $\mu_p+\mu_f$ decays like $r^{\frac{1}{2}}$ as $r\to0$.
\end{step}

Indeed, $\mu_f$ is the solution of 
\[
P\mu_f:=(-i\partial_\theta+\tfrac 12)\mu_f=iv,
\]
where $v$ is the upper right entry of the $d\theta$-component of $A^{g_{\mu_p}}_\infty$ (see \cref{lim.fid.con} for the notation 
and calculations). Using the transformation formula~\eqref{traf.equ.a01} for the $(0,1)$-component of the connection shows that
\[
iv=re^{-2i\theta}D\mu_p+re^{i\theta}\overline{D\mu_p},
\]
where
\[
D=\frac{1}{2}e^{2i\theta}\big(\partial_r+\frac{i}{r}\partial_{\theta}-\frac{1}{2r}\big).
\]
Furthermore, since $\gamma_{\mu_p}$ is Hermitian, $\bar\mu=e^{i\theta}\mu$. It follows that
\[
re^{-2i\theta}D\mu+re^{i\theta}\overline{D\mu}=r\partial_r\mu
\]
so that $\mu_f$ is the solution of the ODE
\[
P\mu_f=r\partial_r\mu_p.
\]
This implies that $\mu_f$ has an expansion in powers of $r^{1/2}$ and Step~\ref{exp.muf} follows from a comparison of coefficients.

\begin{step}
We now can check continuity of the gauge transformation $G_t$ at $r=0$.
\end{step}
\setcounter{step}{0}

By \cref{gau.orb.fid} we know that
\begin{eqnarray*}
g_\infty=\begin{pmatrix}r^{\frac{1}{4}}&0\\0&r^{-\frac{1}{4}}\end{pmatrix}.
\end{eqnarray*}
Furthermore, $g_t^\appr=g^{-1}_\infty$ up to multiplication by a smooth gauge transformation, which can be ignored here. 
By Step~\ref{exp.muf}, $\mu=\mu_p+\mu_f=2C_0r^{1/2}+\mc O(r^{3/2})$, so that 
\[
g_{\mu_p}g_{\mu_f}=g_\mu=\begin{pmatrix}\cosh(e^{i\theta/2}\mu)&e^{-i\theta/2}\sinh(e^{i\theta/2}\mu)\\
e^{i\theta/2}\sinh(e^{i\theta/2}\mu)&\cosh(e^{i\theta/2}\mu)\end{pmatrix}
\]
and finally
\begin{multline*}
\begin{pmatrix}r^{\frac{1}{4}}&0\\0&r^{-\frac{1}{4}}\end{pmatrix}   \begin{pmatrix}\cosh(e^{i\theta/2}\mu)&e^{-i\theta/2}\sinh(e^{i\theta/2}\mu)\\
e^{i\theta/2}\sinh(e^{i\theta/2}\mu)&\cosh(e^{i\theta/2}\mu_2)\end{pmatrix}  \begin{pmatrix}r^{\frac{1}{4}}&0\\0&r^{-\frac{1}{4}}\end{pmatrix}\\
=\begin{pmatrix} \cosh(e^{i\theta/2}\mu)&r^{-\frac{1}{2}}e^{-i\theta/2}\sinh(e^{i\theta/2}\mu)\\
r^{\frac{1}{2}}e^{i\theta/2}\sinh(e^{i\theta/2}\mu)& \cosh(e^{i\theta/2}\mu)  \end{pmatrix}.
\end{multline*}
This is easily seen to have a limit as $r\to0$.
\end{proof}

Starting from the initial pair $(A,\Phi)$ associated with a Higgs bundle $(\delb,\Phi)$ with simple Higgs field $\Phi$, we 
have thus arrived at a complex gauge equivalent pair $(A^\appr_t,\Phi^\appr_t)$. The latter can be regarded as an approximate solution in the following sense.
\begin{lemma}
There exist $C, \delta >0$ such that for $t\gg 1$,  
\begin{equation}\label{lem:approxerror}
\|F_t(g_t^{\appr})\|_{L^2}\leq Ce^{-\delta t}.
\end{equation}
\end{lemma}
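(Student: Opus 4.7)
The strategy is to show that $F_t(g_t^\appr)$ is supported in the annular gluing region and is exponentially small there. Decompose $X$ into $X^\exterior$, the inner discs $D := \bigcup_{p\in\mf p} D_{1/2}(p)$, and the annular collar $\mathcal{A} := X^\interior \setminus D$. On $X^\exterior$ we have $\chi \equiv 0$, so $g_t^\appr = \Id$ and $(A_t^\appr, \Phi_t^\appr) = (A_\infty, \Phi_\infty)$; the decoupled limiting equations $F_{A_\infty}^\perp = 0$, $[\Phi_\infty\wedge\Phi_\infty^*] = 0$ then give $F_t(g_t^\appr) = 0$. On $D$, $\chi \equiv 1$, so $g_t^\appr = g_t$ and Proposition~\ref{gau.orb.fid} yields $(A_t^\appr, \Phi_t^\appr) = (A_t^\fid, \Phi_t^\fid)$, which by construction solves the rescaled Hitchin equation~\eqref{sca.hit.equ}. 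Hence $F_t(g_t^\appr)$ vanishes identically outside $\mathcal{A}$.

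On the annulus $r$ is bounded below by $1/2$, so Lemma~\ref{f_t-h_t-function}(d) together with the complete asymptotic expansion~\eqref{proph} (which, being polyhomogeneous, can be differentiated term-by-term) yields, for every $k \geq 0$,
\[
\|h_t\|_{C^k(\mathcal{A})} \leq C_k\, t^{k}\, e^{-\delta_0 t}, \qquad \delta_0 := \tfrac{8}{3}\bigl(\tfrac{1}{2}\bigr)^{3/2}.
\]
Consequently $\gamma_t = \tfrac{1}{2}\mathrm{diag}(-h_t, h_t)$, and hence $g_t^\appr = \exp(\chi\gamma_t)$ and $G := g_t^\appr(g_t^\appr)^*$, differ from $\Id$ by sections of order $O(e^{-\delta t})$ uniformly in $C^2$ on $\mathcal{A}$, for any $\delta < \delta_0$.

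To convert this into the claimed $L^2$ bound, I use the complex gauge transformation formula~\eqref{cur.com.gt}. Since $F_{A_\infty}^\perp = 0$ on $\mathcal{A}$,
\[
F_{A_\infty^{g_t^\appr}}^\perp = (g_t^\appr)^{-1}\,\delb_{A_\infty}\!\bigl(G\,\del_{A_\infty} G^{-1}\bigr)\,g_t^\appr,
\]
which depends on at most two derivatives of $G$ and vanishes when $G = \Id$; the coefficients of $A_\infty$ are bounded on $\mathcal{A}$, since $\mathcal{A}$ is disjoint from $\mf p$. Expanding $[\Phi_\infty^{g_t^\appr}\wedge(\Phi_\infty^{g_t^\appr})^*]$ around $g_t^\appr = \Id$ and using $[\Phi_\infty\wedge\Phi_\infty^*] = 0$ gives an analogous pointwise bound for the Higgs term. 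The only mild obstacle is the $t^2$ prefactor (together with the polynomial $t$-losses coming from differentiating $h_t$), but since $t^k e^{-\delta t} \leq e^{-\delta' t}$ for any $\delta' < \delta$ and $t$ sufficiently large, these are absorbed into the exponential rate. Integrating the resulting pointwise bound over the compact annulus $\mathcal{A}$ yields~\eqref{lem:approxerror}.
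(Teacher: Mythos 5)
Your proposal is correct and follows essentially the same route as the paper's (much more compressed) proof: reduce to the gluing annuli, use the exponential decay of $h_t$ from \cref{f_t-h_t-function} to see that $g_t^{\appr}$ is exponentially close to the identity there, and then bound the curvature term via \cref{cur.com.gt} and the Higgs term via the decoupled equations, absorbing the $t^2$ and polynomial losses into the exponential. Your write-up simply makes explicit the steps the paper leaves implicit (vanishing on $X^{\exterior}$ and on the inner discs, derivative bounds on $h_t$, and the rate $\delta_0$).
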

\begin{proof}
By the definition of $(A_t^{\appr},\Phi_t^{\appr})$,  it suffices to estimate the error on $X^{\interior}\setminus\bigcup_{p\in\mathfrak p}D_{1/2}(p)$. 
From the properties of $h_t$ in \cref{f_t-h_t-function} we see that $g_t$ converges to the identity on $X^{\interior}\setminus
\bigcup_{p\in\mathfrak p}D_{1/2}(p)$ like $e^{-ct}$ as $t\to\infty$. In particular, both terms on the right in 
\begin{eqnarray*}
F_t(g_t^{\appr})=F_{(A^{\infty})^{g_t^{\appr}}}^\perp+t^2[(g_t^{\appr})^{-1}\Phi_{\infty}g_t^{\appr}\wedge(g_t^{\appr})^{-1}\Phi_{\infty}g_t^{\appr})^{\ast}]
\end{eqnarray*}
converge exponentially in $t$ to $0$ (cf.\ \cref{cur.com.gt} for the curvature term). This gives~\eqref{lem:approxerror}. 
\end{proof}
%
\subsection{Global linear estimates} 
Let $L_t$ be computed at the pair $(A_t^\appr,\Phi_t^\appr)$. We now establish estimates for $G_t=L_t^{-1}\colon 
L^2(i\mf{su}(E))\to H^2(i\mf{su}(E))$. Let $\lambda_t(X) >0$ be the first eigenvalue of $L_t=\Delta_{A_t^\appr}+
t^2 M_{\Phi_t^\appr}$ on $X$, and $\lambda_t(X^{\interior})$, resp.\ $\lambda_t(X^{\exterior})$ the first Neumann 
eigenvalues of $L_t$ on $X^{\interior}$ and $X^{\exterior}$, respectively. To be clear, the domain of the 
Neumann extension on either of these regions is 
\[
\{u\in H^2(i\mf{su}(E)|_{X^{\interior/\exterior}})\mid(d_{A_t} u)\nu=0\}
\]
where $\nu$ is the unit normal $\nu$. The key result which allows us to extend the estimates above
to the whole of $X$ is the {\em domain decomposition principle}, see for instance \cite[Proposition 3]{ba00}, 
which states that
\[
\lambda_t(X) \geq \min \{ \lambda_t(X^{\interior}), \lambda_t(X^{\exterior})\}.
\]

\begin{lemma}\label{lem:firsteigenvalue}
For $t \geq 1$, there is a uniform lower bound
\[
\lambda_t(X)\geq\lambda > 0.
\]
\end{lemma}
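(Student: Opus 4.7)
The plan is to invoke the domain decomposition principle stated just above the lemma, reducing matters to uniform lower bounds on $\lambda_t(X^\interior)$ and $\lambda_t(X^\exterior)$ separately. In each region the strategy is the same: identify the pointwise kernel of the zeroth-order term $t^2 M_\Phi$, use the potential $t^2 M_\Phi$ to dominate $L_t$ on the orthogonal complement, and use the connection Laplacian to control sections living in that kernel.

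\textbf{Exterior.} On $X^\exterior$ the approximate pair coincides with the $t$-independent limiting configuration $(A_\infty,\Phi_\infty)$, and $\Phi_\infty$ is smooth, normal, and nowhere vanishing on $\overline{X^\exterior}$. I would split $i\mathfrak{su}(E) = iL_{\Phi_\infty}\oplus iL_{\Phi_\infty}^\perp$ orthogonally into the kernel of $M_{\Phi_\infty}$ and its complement. Since $d_{A_\infty}\Phi_\infty=0$, the operator $d_{A_\infty}$ commutes with $[\Phi_\infty,\cdot\,]$, so the splitting is parallel for $A_\infty$ and thus preserved by $L_t$. On $iL_{\Phi_\infty}^\perp$ the map $M_{\Phi_\infty}$ is a continuous, pointwise positive endomorphism on a compact set, hence bounded below by some $c>0$, so $L_t\geq ct^2\geq c$ there. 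On $iL_{\Phi_\infty}$ the operator $L_t$ reduces to the bundle Laplacian $\Delta_{A_\infty}|_{iL_{\Phi_\infty}}$, which is $t$-independent; its first Neumann eigenvalue is strictly positive because $iL_{\Phi_\infty}$ has half-turn monodromy around each boundary circle $\partial D_1(p)$ — locally it is spanned by $\sigma(\theta)=\sin(\theta/2)i\tau_2+\cos(\theta/2)i\tau_3$, which is antiperiodic — so no nontrivial parallel section exists.

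\textbf{Interior.} On each disk $D_1(p)\subset X^\interior$ I would adapt the Fourier/ODE reduction of \cref{inverse} to Neumann boundary conditions. On the inner half $D_{1/2}(p)$ the pair is exactly $(A_t^\fid,\Phi_t^\fid)$, so the decomposition $i\mathfrak{su}(E)=V\oplus V^\perp$ followed by an angular Fourier expansion decouples $L_t$ into the family $L_{\ell,t}$ of Section 4.2. For $\ell\neq 0$ the centrifugal term $(\ell\pm 4f_t)^2/r^2$ on $V^\perp$ and $\ell^2/r^2$ on $V$ give a uniform positive lower bound independent of $t$. For $\ell=0$ the Laplacian degenerates and positivity must come from the potential: a direct computation gives $M_{\Phi_t^\fid}(\sigma_1)=16 r\cosh(2h_t)\sigma_1$ on $V$, and using $|ab|=r$ on the fiducial Higgs coefficients one obtains the uniform pointwise bound $16r\cosh(2h_t)\geq 16r$ (independent of $t$), with an analogous bound on $V^\perp$. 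A variational/compactness argument for $\Delta + 16t^2 r$ with Neumann conditions on the disk then yields a uniform positive first eigenvalue. On the outer annular collar $D_1(p)\setminus D_{1/2}(p)$, \cref{f_t-h_t-function}(b),(d) give $(A_t^\appr,\Phi_t^\appr)\to(A_\infty,\Phi_\infty)$ exponentially in $t$, so for $t$ large enough the exterior-style splitting applies verbatim and gives a uniform bound there. Patching the two subregions by one further domain decomposition on each disk completes the estimate for $\lambda_t(X^\interior)$.

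\textbf{Principal obstacle.} The main difficulty lies along the line bundle $iL_\Phi$, where the large term $t^2 M_\Phi$ vanishes and only the connection Laplacian prevents the first eigenvalue from collapsing. In the exterior this is resolved by a purely topological fact — the half-turn monodromy of $iL_{\Phi_\infty}$ around each puncture — while in the interior the zero Fourier mode requires a Poincar\'e-type argument using the bound $M_{\Phi_t^\fid}\geq c\,r\cdot\mathrm{Id}$ uniform in $t$. The most delicate bookkeeping is verifying that the inner fiducial estimates and the outer annular estimates patch to a single uniform-in-$t$ lower bound on each $D_1(p)$, and that the resulting constants combine with the exterior bound to give $\lambda$ independent of $t\geq 1$.
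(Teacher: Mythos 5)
Your overall architecture (domain decomposition into $X^\interior$ and $X^\exterior$, twisted-line-bundle argument in the exterior, Fourier/ODE reduction in the interior) is the paper's, and your exterior step is essentially the paper's Step 2. But the interior argument has a genuine gap at exactly the point you yourself flag as the principal obstacle: the zero Fourier mode on $iV^\perp$. Your claim that $M_{\Phi_t^\fid}\geq c\,r\cdot\Id$ uniformly in $t$ (the ``analogous bound on $V^\perp$'') is false. In the fiducial frame the restriction of $M_{\varphi_t^\fid}$ to $iV^\perp=\langle\sigma_2,\sigma_3\rangle$ is
\[
8\begin{pmatrix} |z|\cosh (2h_t) + \Re z & -\Im z \\ -\Im z & |z|\cosh (2h_t) - \Re z\end{pmatrix},
\]
whose eigenvalues are $8r\bigl(\cosh (2h_t(r))\pm 1\bigr)$. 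The smaller eigenvalue is $8r(\cosh(2h_t)-1)\approx 16\,r\,h_t(r)^2$, and by \cref{f_t-h_t-function}\,d) this decays like $e^{-\frac{16}{3}tr^{3/2}}$; even after multiplying by $t^2$ it tends to zero as $t\nearrow\infty$ for each fixed $r>0$. This degeneration is forced: the direction $iL_{\Phi}$, along which the potential must vanish in the limit, lies inside $iV^\perp$. So no Poincar\'e-type argument based on a uniform pointwise lower bound for $t^2M_{\Phi_t^\fid}$ on $iV^\perp$ can work, and the interior estimate for the degenerate angular modes (the $\ell=0$ component, and its mirror in the $\ell=1$ block of the coupled system) is not established by your proposal.

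The paper's proof closes exactly this hole the other way around: on $iV$ it uses positivity of the potential (your computation $M_{\varphi_t^\fid}\sigma_1=16r\cosh(2h_t)\sigma_1$ is the right one there), but on $iV^\perp$ it discards the potential entirely, using $L_t\geq\Delta_{A_t^\appr}$ and the Fourier reduction of \cref{lin.op.fid}: the only delicate mode then carries the centrifugal term $16 f_{\chi,t}^2/r^2$, and positivity of $-r^{-2}(r\partial_r)^2+16r^{-2}f_{\chi,t}^2$ with Neumann conditions at $r=1$ follows from the integration-by-parts/kernel argument (with uniformity in $t$ available because $f_t$ is monotone increasing in $t$, \cref{f_t-h_t-function}\,b), so the potential only improves as $t$ grows); note the paper does this on the whole unit disk with the glued coefficient $f_{\chi,t}$, so your extra decomposition into $D_{1/2}(p)$ and the annulus is unnecessary. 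Two smaller points: your annulus step (``the exterior-style splitting applies verbatim'') is only approximate, since on the collar $(A_t^\appr,\Phi_t^\appr)$ differs from $(A_\infty,\Phi_\infty)$ by exponentially small but nonzero terms, so the splitting $iL_{\Phi_\infty}\oplus iL_{\Phi_\infty}^\perp$ is not exactly preserved and an error estimate is needed; and any ``for $t$ large enough'' step must be supplemented on the compact range $t\in[1,T_0]$ (fixed-$t$ positivity plus continuity), which you do not address.
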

\begin{proof}
We proceed in two steps.
\begin{step}
We have $A^\appr_t=2f_{\chi,t}\sigma_1d\theta$, where $8f_{\chi,t}=1+2r\partial(\chi h_t)$, so we can analyze 
$L_t$ via a Fourier reduction as in \cref{lin.op.fid}. Noting that $M_{\Phi^\appr_t}$ is positive on $iV$, we obtain that $L_t$ is 
strictly positive on this subbundle. On the other hand, $L_t \geq\Delta_{A^\appr_t}$ on $iV^\perp$. This requires checking that
the operator 
\[
D\varphi:=-r^{-2}(r\partial_r)^2+16r^{-2} f_{\chi,t}^2
\]
with Neumann (rather than Dirichlet) conditions at $r=1$ is strictly positive.  To see this, observe that the summands of $D$ 
are non-negative. If $L\varphi=0$, then integration by parts shows that $\partial_r \varphi=f_{\chi,t} \varphi=0$, whence $\varphi = 0$.
\end{step}

\begin{step} 
Note that $L_t\geq\Delta_{A_t^\appr} + M_{\Phi_t^\appr}$ when $t \geq 1$.  Now
\[
\int_{X^\exterior} \langle (\Delta_{A_\infty} + M_{\Phi_\infty}) \gamma, \gamma \rangle = \int_{X^\exterior} | d_{A_\infty} \gamma |^2 + 
\int_{X^\exterior} 4 | [\gamma \wedge \Phi_\infty]|^2.
\]
In particular, the kernel of the Neumann extension of $\Delta_{A_\infty} + M_{\Phi_\infty}$ consists of parallel sections $\gamma$ 
of $iL_{\Phi_\infty}$. As explained in \cref{spe.f.gau} and \cref{lim.fid.con}, this is a twisted line bundle, so $\gamma=0$. 
We conclude that this Neumann extension is invertible on $X^\exterior$, and hence has a positive first eigenvalue. Thus 
there exists $\lambda^\exterior>0$ such that
\[
\lambda_t(X^{\exterior})\geq\lambda^{\exterior}>0.
\]
\end{step}
\setcounter{step}{0}

\noindent The result now follows if we set $\lambda:=\min\{\lambda^{\interior},\lambda^{\exterior}\}$.
\end{proof}

\begin{corollary}\label{ellzwei}
$\|G_t v \|_{L^2} \leq C \| v \|_{L^2}$ for $C=\lambda^{-1}$.
\end{corollary}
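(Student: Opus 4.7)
The plan is to deduce the bound directly from the spectral-gap statement in the preceding lemma together with the self-adjointness and nonnegativity of $L_t$. Recall from the discussion preceding \cref{positivity} that $L_t$ is the composition $-i\ast\circ D F_t$, and from \cref{positivity} that
\[
\langle L_t\gamma,\gamma\rangle_{L^2}=t^{-2}\|d_A\gamma\|_{L^2}^2+4\|[\Phi,\gamma]\|_{L^2}^2\geq 0,
\]
so $L_t$ is a nonnegative self-adjoint operator on $L^2(i\mf{su}(E))$ with dense domain $H^2(i\mf{su}(E))$. Combined with \cref{lem:firsteigenvalue}, this gives that the spectrum $\sigma(L_t)\subset[\lambda,\infty)$ for all $t\geq 1$, with $\lambda>0$ independent of $t$.

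By the spectral theorem for unbounded self-adjoint operators, $G_t=L_t^{-1}$ is therefore a well-defined bounded operator on $L^2$ with operator norm
\[
\|G_t\|_{\mc L(L^2,L^2)}=\sup_{\mu\in\sigma(L_t)}\mu^{-1}\leq \lambda^{-1}.
\]
Equivalently, writing $v=L_tu$ for $u\in\dom(L_t)$ and using the min-max characterization of the bottom of the spectrum,
\[
\langle v,u\rangle_{L^2}=\langle L_tu,u\rangle_{L^2}\geq\lambda\|u\|_{L^2}^2,
\]
so by Cauchy--Schwarz $\|u\|_{L^2}\leq\lambda^{-1}\|v\|_{L^2}$, i.e.\ $\|G_tv\|_{L^2}\leq\lambda^{-1}\|v\|_{L^2}$.

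There is no real obstacle here: the result is a one-line consequence of self-adjointness and the uniform spectral gap established in \cref{lem:firsteigenvalue}. The substantive content was already carried out in proving that lemma, particularly the domain decomposition argument and the fact that the kernel of the Neumann extension of $\Delta_{A_\infty}+M_{\Phi_\infty}$ on $X^{\exterior}$ is trivial, owing to the non-triviality of the line bundle $iL_{\Phi_\infty}$.
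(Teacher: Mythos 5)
Your proof is correct and is essentially the paper's own (implicit) argument: the corollary is stated as an immediate consequence of \cref{lem:firsteigenvalue}, using exactly the self-adjointness and nonnegativity of $L_t$ together with the uniform lower bound $\lambda_t(X)\geq\lambda$ to get $\|G_t\|_{\mc L(L^2,L^2)}\leq\lambda^{-1}$.
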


We now use the $t$-dependent Sobolev space $H^2_t:=\dom L_t$, endowed with the graph norm 
\[
\| u \|^2_{L_t} = \|u\|_{L^2}^2 + \| L_t u\|_{L^2}^2.
\]
Clearly, $\|G_t v\|_{L_t} \leq C \|v\|_{L^2}$ for all $t \geq 1$ and some $C$ independent of $t$. Note that $H^2_t = H^2$ for all $t$, 
but the norms are not uniformly equivalent as $t \nearrow \infty$. 

\begin{lemma}\label{lem:equivnorms}
If $u\in H^2(i\mf{su}(E))$, then $\|u\|_{H^2}\leq Ct^2\|u\|_{L_t}$. 
\end{lemma}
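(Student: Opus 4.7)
The plan is to reduce this global $H^2$ estimate to the standard elliptic regularity bound $\|u\|_{H^2} \leq C(\|\Delta_\nabla u\|_{L^2} + \|u\|_{L^2})$ on the compact surface $X$, where $\Delta_\nabla$ is the Bochner Laplacian of the fixed smooth background connection $\nabla$ on $i\su(E)$. It therefore suffices to bound $\|\Delta_\nabla u\|_{L^2}$ by $Ct^2\|u\|_{L_t}$, which I will do by comparing $\Delta_\nabla$ to the twisted Laplacian $\Delta_{A^\appr_t}$ appearing in $L_t u = \Delta_{A^\appr_t} u - i\ast t^2 M_{\Phi^\appr_t} u$.

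The key inputs are pointwise size estimates for the approximate solution on all of $X$. On the interior fiducial region, where $A^\appr_t = A_t^\fid$ has connection matrix proportional to $f_t(r)\,\sigma_1(dz/z - d\bar z/\bar z)$, part (c) of \cref{f_t-h_t-function} and the analogous bound on $f_t'(r)/r$ give $|A^\appr_t|_\infty \leq Ct^{2/3}$ and $|\nabla A^\appr_t|_\infty \leq Ct^{4/3}$. On $X^\exterior$ the approximate connection equals the smooth limiting connection $A_\infty$ and is uniformly bounded, while \cref{emmviehtee}, together with smoothness of $\Phi_\infty$ on $X^\exterior$, yields $\|M_{\Phi^\appr_t}\|_\infty \leq C$ uniformly in $t$. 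Combining these with \cref{ellzwei} at once gives
\[
\|\Delta_{A^\appr_t} u\|_{L^2} \leq \|L_t u\|_{L^2} + Ct^2\|u\|_{L^2} \leq Ct^2\|u\|_{L_t}.
\]

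Writing $a := A^\appr_t - A_0$, the pointwise bound $|(\Delta_{A^\appr_t} - \Delta_\nabla) u| \leq C(|a|\,|\nabla u| + (|a|^2 + |\nabla a|)|u|)$ reduces the comparison to a bound on $\|\nabla u\|_{L^2}$. This is supplied by \cref{positivity}: integration by parts plus \cref{ellzwei} yields $\|d_{A^\appr_t} u\|_{L^2}^2 \leq Ct^2 \|L_t u\|_{L^2} \|u\|_{L^2} \leq Ct^2\|u\|_{L_t}^2$, whence $\|\nabla u\|_{L^2} \leq \|d_{A^\appr_t} u\|_{L^2} + |a|_\infty \|u\|_{L^2} \leq Ct\|u\|_{L_t}$. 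Plugging in the pointwise bounds,
\[
\|(\Delta_{A^\appr_t} - \Delta_\nabla)u\|_{L^2} \leq C(t^{2/3}\cdot t + t^{4/3})\|u\|_{L_t} \leq Ct^{5/3}\|u\|_{L_t},
\]
so $\|\Delta_\nabla u\|_{L^2} \leq Ct^2\|u\|_{L_t}$, and the lemma follows.

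The main obstacle is the tight bookkeeping of powers of $t$: the $t^2 M_\Phi$ term in $L_t$ already saturates the target, so every other error has to stay strictly below $t^2$. A natural alternative — decomposing $X$ via a partition of unity into fiducial disks plus the exterior and applying \cref{cor:estLaplinverse} on each disk — seems to pick up an extra factor of $t$ from the commutator of $L_t$ with the cutoff, and therefore appears to miss the claimed bound. This is why the direct global comparison above is the approach I would take.
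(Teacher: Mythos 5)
Your proof is correct, but it takes a genuinely different route from the paper's. The paper cuts $u$ into $u^{\interior}+u^{\exterior}$, applies the fiducial disk estimate \cref{cor:estLaplinverse} (i.e.\ the separation-of-variables analysis of $L_t$ at the fiducial solution) to the interior piece, and on the exterior compares $L_t$ with $\tilde L_t=\Delta_{A_\infty}+t^2M_{\Phi_\infty}$, using the exponential convergence of $(A_t^{\appr},\Phi_t^{\appr})$ to the limiting configuration there and the equivalence of the graph norm of $\Delta_{A_\infty}$ with $H^2$ away from the punctures. You instead argue globally: standard elliptic regularity for a fixed background Laplacian, the uniform bound on $M_{\Phi_t^{\appr}}$, the pointwise bounds $|A_t^{\appr}|\leq Ct^{2/3}$ and $|\nabla A_t^{\appr}|\leq Ct^{4/3}$, and the energy identity of \cref{positivity} to control $\|\nabla u\|_{L^2}$, so that every error term stays at order $t^{5/3}$, strictly below the $t^2$ coming from the potential. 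What your route buys is that $u$ is never cut off, so you avoid the commutator $[L_t,\chi]$ that the paper's two-region argument quietly suppresses (with the natural normalization $\langle L_tu,u\rangle_{L^2}=\|d_{A_t^{\appr}}u\|_{L^2}^2+4t^2\|[\Phi_t^{\appr},u]\|_{L^2}^2$ that commutator is in fact harmless, but it deserves a sentence); what it costs is the derivative bound $\sup|\nabla A_t^{\appr}|\leq Ct^{4/3}$, which is not literally in the paper: \cref{f_t-h_t-function}(c) gives $f_t/r\leq Ct^{2/3}$ and $f_t/r^2\leq Ct^{4/3}$, and the remaining piece $r^{-1}\partial_rf_t\leq Ct^{4/3}$ follows from $\eta'(\rho)\leq C\rho^{1/3}$ exactly as in the proof of \cref{lem:growthnormA}, so you should spell that line out (note that the $C^1$ bound $\|A_t^{\appr}\|_{C^1}\leq Ct$ there refers to the coefficient $f_{\chi,t}$, not to the tensor norm of $\nabla A_t^{\appr}$, which does reach size $t^{4/3}$ at $r\sim t^{-2/3}$). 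Finally, your use of \cref{positivity} is robust to its normalization: whether the identity carries the factor $t^{-2}$ as printed or not, you only need $\|d_{A_t^{\appr}}u\|_{L^2}\leq Ct\|u\|_{L_t}$, and that holds in either case.
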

\begin{proof}
Using cut-off functions, write $u=u^\interior + u^\exterior$ with $\supp u^\interior \subset X^\interior$ and 
$\supp u^\exterior \subset X \setminus\bigcup_{p\in\mathfrak p}D_{1/2}(p)$. Then by \cref{cor:estLaplinverse} we have
\[
\|u^\interior\|_{H^2} \leq C (1+t^2)\|u\|_{L_t}.
\]
On $X \setminus\bigcup_{p\in\mathfrak p}D_{1/2}(p)$, consider the linear operator
\[
\tilde{L}_t:=\Delta_{A_\infty} + t^2 M_{\Phi_\infty}
\]
with Dirichlet boundary conditions. Then $\tilde L_t$ is invertible and we write $\tilde{G}_t:=\tilde{L}_t^{-1}$. 
Now 
\[
\|\tilde L_t u\|_{L^2}\leq \| L_t u\|_{L^2} + \|(\tilde L_t - L_t )u\|_{L^2}
\]
and since $A_t$ converges to $A_\infty$ and $\Phi_t$ converges to $\Phi_\infty$ exponentially in $t$, 
\[
\| (\tilde{L}_t - L_t)u\|_{L^2} \leq C e^{-\delta t} \|u\|_{L^2}.
\]
In addition, 
\begin{align*}
\| \Delta_{A_\infty} u\|_{L^2} &=\|\Delta_{A_\infty}u + t^2 M_{\Phi_\infty} u - t^2 M_{\Phi_\infty} u\| \\
&\leq \|\tilde{L}_t u \|_{L^2} + t^2 \|M_{\Phi_\infty}u\|_{L^2}\\
& \leq \|\tilde{L}_t u \|_{L^2} + t^2 \sup |M_{\Phi_\infty}| \|u\|_{L^2},
\end{align*}
which leads to the estimate
\[
\|\Delta_{A_\infty}u\|_{L^2} \leq \| L_t u\|_{L^2} + C_t \|u\|_{L^2} +Ct^2 \|u\|_{L_2}.
\]
This gives the claim since the graph norm of $\Delta_{A_\infty}$ is equivalent to the standard $H^2$-norm .
\end{proof}

Summarizing we proved the following global linear estimate.

\begin{proposition}\label{lem:globallinest}
Let $(A_t^\appr,\Phi_t^\appr)$ be the approximate solution from \cref{appr}. Then the inverse $G_t$ to 
$L_t=\Delta_{A_t^\appr}+t^2 M_{\Phi_t^\appr}$ satisfies 
\[
\|G_t v\|_{H^2}\leq Ct^2\|v\|_{L^2}.
\]
\end{proposition}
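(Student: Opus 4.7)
The plan is to simply combine the two preceding estimates: Corollary \ref{ellzwei} controls the $L^2$ norm of $G_t v$ by the $L^2$ norm of $v$, while Lemma \ref{lem:equivnorms} compares the fixed $H^2$ norm to the $t$-dependent graph norm $\|\cdot\|_{L_t}$, paying a factor of $t^2$. Chaining these together should give the claim directly, so the proof is essentially a one-line argument; no new analytic input is required.

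More precisely, I would set $u = G_t v \in H^2(i\mf{su}(E))$, so that $L_t u = v$ by definition. Then
\[
\|u\|_{L_t}^2 = \|u\|_{L^2}^2 + \|L_t u\|_{L^2}^2 = \|G_t v\|_{L^2}^2 + \|v\|_{L^2}^2,
\]
and Corollary \ref{ellzwei} bounds the first summand by $C^2\|v\|_{L^2}^2$. Therefore $\|u\|_{L_t} \leq C'\|v\|_{L^2}$ with $C'$ independent of $t$, and an application of Lemma \ref{lem:equivnorms} yields $\|G_t v\|_{H^2} = \|u\|_{H^2} \leq C t^2 \|u\|_{L_t} \leq C'' t^2 \|v\|_{L^2}$, which is the stated bound.

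There is no real obstacle here — the work has already been done in the local analysis of \cref{inverse} (which supplies the interior bound used in \cref{lem:equivnorms}) and in the domain decomposition argument of \cref{lem:firsteigenvalue} (which powers \cref{ellzwei}). The one point worth flagging is that Corollary \ref{ellzwei} is applied to the \emph{global} $L_t$ on $X$, whose $L^2$-inverse bound rests on the uniform lower bound $\lambda_t(X) \geq \lambda > 0$ — this is why the constant $C$ in the final estimate can be taken $t$-independent apart from the explicit $t^2$ factor. Once this is noted, the proposition is immediate, and the proof should occupy only a few lines.
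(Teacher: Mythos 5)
Your proposal is correct and is exactly the paper's argument: the paper notes right after \cref{ellzwei} that $\|G_t v\|_{L_t}\leq C\|v\|_{L^2}$ uniformly in $t$, and then \cref{lem:equivnorms} converts the graph norm into the $H^2$ norm at the cost of $t^2$, which is why the paper introduces the proposition with ``Summarizing we proved...'' rather than giving a separate proof. Nothing is missing.
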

%
\subsection{Deforming the approximate solutions}
We are now finally prepared to give the argument which shows how to perturb the approximate solutions $(A_t^{\appr},\Phi_t^{\appr})$ 
to an exact solution of Hitchin's equations when $t \gg 1$. 

\begin{theorem}\label{thm:nonlinearbvaluedisk}
Let $B_\rho$ be the closed ball of radius $\rho$ around the zero section in $H^2(i\mf{su}(E))$. Then there is a value $m > 0$ and 
a unique Hermitian $\gamma_t\in B_{t^{-m}}$ such that, when $t$ is sufficiently large, 
$(A_t,\Phi_t):=(A_t^{\appr},\Phi_t^{\appr})^{\exp(\gamma_t)}$ solves the rescaled Hitchin equations. 
\end{theorem}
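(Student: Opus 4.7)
The plan is to cast the equation $F_t(\gamma) = 0$ (with $F_t$ defined in \eqref{eq:defmapFt} centered at the approximate solution $(A_t^{\appr}, \Phi_t^{\appr})$) as a fixed-point problem for a contraction mapping on a small ball in $H^2(i\mf{su}(E))$. Write the Taylor expansion
\[
F_t(\gamma) = F_t(0) + L_t\gamma + N_t(\gamma),
\]
where $F_t(0)$ is the approximate error controlled by \eqref{lem:approxerror}, the linearization $L_t = \Delta_{A_t^{\appr}} + t^2 M_{\Phi_t^{\appr}}$ is invertible with inverse $G_t$ bounded as in \cref{lem:globallinest}, and $N_t(\gamma) := F_t(\gamma) - F_t(0) - L_t\gamma$ collects all terms of order $\geq 2$ in $\gamma$. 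Solving $F_t(\gamma)=0$ is then equivalent to the fixed-point equation
\[
\gamma = T_t(\gamma) := -G_t\bigl(F_t(0) + N_t(\gamma)\bigr).
\]

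The first step is to record the ``free term'' estimate: combining \cref{lem:globallinest} with \eqref{lem:approxerror} gives
\[
\|T_t(0)\|_{H^2} = \|G_t F_t(0)\|_{H^2} \leq C t^2 \, e^{-\delta t},
\]
which is far smaller than any polynomial $t^{-m}$ for $t$ large. The second step is the crucial quadratic estimate for $N_t$. Using the explicit formula \eqref{cur.com.gt} for $F_{A^g}$ together with $(\Phi^g)^\ast = g^\ast \Phi^\ast g^{\ast-1}$ and the power series $g = \exp(\gamma) = \Id + \gamma + \tfrac{1}{2}\gamma^2 + \ldots$, one sees that $N_t(\gamma)$ is a smooth Nemytskii-type expression in $\gamma$, its first derivatives, and the bounded coefficients $A_t^{\appr}, \Phi_t^{\appr}$; the only place where a factor of $t^2$ enters explicitly is in the commutator term $t^2[\Phi^g \wedge (\Phi^g)^\ast]$. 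Applying the Sobolev embedding $H^2 \hookrightarrow L^\infty$ (valid on the compact surface $X$) together with \cref{lem:MtL2bound} and \cref{emmviehtee}, one obtains bounds of the form
\[
\|N_t(\gamma_1) - N_t(\gamma_2)\|_{L^2} \leq C t^2 \bigl(\|\gamma_1\|_{H^2} + \|\gamma_2\|_{H^2}\bigr)\,\|\gamma_1 - \gamma_2\|_{H^2},
\]
with $C$ independent of $t$. Composing with $G_t$ and invoking \cref{lem:globallinest} once more yields
\[
\|T_t(\gamma_1) - T_t(\gamma_2)\|_{H^2} \leq C t^{4}\bigl(\|\gamma_1\|_{H^2} + \|\gamma_2\|_{H^2}\bigr)\,\|\gamma_1 - \gamma_2\|_{H^2}.
\]

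The third step is the fixed-point iteration. Choose any $m > 4$ (say $m = 5$). For $\gamma_1, \gamma_2 \in B_{t^{-m}}$ the Lipschitz constant of $T_t$ is bounded by $C t^{4 - m} \to 0$, so for $t$ sufficiently large $T_t$ is a strict contraction on $B_{t^{-m}}$. Combined with the initial estimate $\|T_t(0)\|_{H^2} \leq C t^{2} e^{-\delta t} \ll \tfrac{1}{2} t^{-m}$, the Banach fixed-point theorem produces a unique $\gamma_t \in B_{t^{-m}}$ with $T_t(\gamma_t)=\gamma_t$, equivalently $F_t(\gamma_t) = 0$. Because complex gauge transformations preserve the condition $\delb_A \Phi = 0$, the pair $(A_t, \Phi_t) = (A_t^{\appr}, \Phi_t^{\appr})^{\exp(\gamma_t)}$ then solves the full rescaled Hitchin system. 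Elliptic regularity applied to $\gamma_t$, together with the smoothness of $(A_t^\appr,\Phi_t^\appr)$, ensures that $(A_t,\Phi_t)$ is smooth.

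The main obstacle is the tracking of the $t$-dependence in the nonlinear estimate: the inverse $G_t$ loses a factor $t^2$, and the $t^2$ coefficient in front of $[\Phi\wedge\Phi^*]$ contributes another $t^2$, so the contraction constant on $B_{t^{-m}}$ is of order $t^{4-m}$. This forces $m > 4$, but it is consistent because the approximate error $F_t(0)$ decays exponentially in $t$ and therefore easily dominates any polynomial threshold. The verification that $N_t$ really does satisfy a quadratic estimate with a constant independent of $t$ (beyond the explicit $t^2$) is the only subtle point, requiring careful use of \cref{lem:MtL2bound} and \cref{emmviehtee} to ensure that the bounded Higgs field $\Phi_t^{\appr}$ contributes uniformly bounded coefficients.
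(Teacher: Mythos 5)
Your proposal is correct and follows essentially the same route as the paper: the identical Taylor expansion $F_t(\exp\gamma)=F_t(0)+L_t\gamma+Q_t(\gamma)$, the inverse bound $\|G_t\|\le Ct^2$ from \cref{lem:globallinest}, the exponentially small error \eqref{lem:approxerror}, a quadratic Lipschitz bound $Ct^2\rho$ for the nonlinearity, and a contraction on a ball of radius $t^{-m}$ with $m>4$ (the paper takes $\rho_t=t^{-4-\epsilon}$). The only imprecision is your attribution of all explicit $t$-dependence in the nonlinear term to $t^2[\Phi\wedge\Phi^*]$ while calling $A_t^{\appr}$ a bounded coefficient: in fact $\|A_t^{\appr}\|_{C^1}\le Ct$ (\cref{lem:growthnormA}), and this growth also contributes factors of $t$ to the estimate of $Q_t$ (cf.\ \eqref{eq:Lipschitzhigherorder}), though the resulting bound $Ct^2\rho$ — and hence your contraction constant $Ct^{4-m}$ — is exactly what the paper obtains.
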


\begin{remark}
\cref{thm:nonlinearbvaluedisk} gives a solution to the original Hitchin equations for the Higgs bundle $(\delb,t\Phi)$, when the parameter $t$ is large, which 
is complex gauge equivalent to the initial pair $(A,t\Phi)$ as shown by \cref{prop:continuitysinggauge}. In this way, \cref{thm:nonlinearbvaluedisk} provides 
a constructive proof of Hitchin's existence theorem (when $t\Phi$ is large). We can regard \cref{thm:nonlinearbvaluedisk} as a desingularization theorem 
for limiting configurations. This shows in particular that any limiting configuration arises from a Higgs bundle. In this way we can think of the real 
$6\gamma-6$-dimensional torus of limiting configurations from \cref{bound.stratum} as a boundary stratum of Hitchin's moduli space obtained by 
projectivizing the fibre $\det^{-1}(q)$ for a fixed determinant $q\in H^0(X,K^2)$ with simple zeroes. 
\end{remark}

The solution $\gamma_t$ is obtained using a standard contraction mapping argument. To do this, we study the linearization
$L_t$,  computed at $(A_t^{\appr},\Phi_t^{\appr})$.  The argument relies on controlling the following quantities: 
\begin{itemize}
\item the norm of the inverse $L_t^{-1}$, and
\item the Lipschitz constants of the linear and higher order terms in the Taylor expansion of $F_t$.
\end{itemize}
The first of these was handled by \cref{lem:globallinest}, but we must now study the nonlinear terms in $F_t$ in greater detail. 
 
\medskip

For $g=\exp(\gamma)$, $\gamma\in\Omega^0(i\mf{su}(E))$, we have
\[
\mathcal{O}_{(A,\Phi)}(g)=(A,\Phi)^g=(A+g^{-1}(\bar\partial_{A}g)-(\partial_{A}g)g^{-1},g^{-1}\Phi g),
\]
and consequently, 
\begin{gather*}
A^{\exp\gamma}=A + (\bar\partial_{A} - \partial_{A}) \gamma + R_{A}(\gamma)\\
\Phi^{\exp\gamma}=\Phi + [\Phi, \gamma] + R_{\Phi}(\gamma).
\end{gather*}
The explicit expressions of these remainder terms are
\begin{equation}
\label{eq:remainderRA}
R_{A}(\gamma)=\exp(-\gamma) (\bar\partial_{A}(\exp \gamma))-(\partial_{A}(\exp \gamma)) 
\exp(-\gamma)- (\bar\partial_{A}-\partial_{A}) \gamma
\end{equation}
\begin{equation}\label{eq:remainderRPhi}
R_{\Phi}(\gamma)= \exp(-\gamma) \Phi \exp \gamma - [\Phi,\gamma]-\Phi.
\end{equation}
We then calculate that 
\begin{equation}
\begin{array}{rl}
F_t(\exp \gamma) &= F^\perp_{(A_t^\appr)^{\exp(\gamma)}}+t^2[(\Phi_t^\appr)^{\exp(\gamma)}\wedge(\Phi_t^\appr)^{\exp(\gamma)})^\ast] \\
&=\pr_1\mathcal{H}_t(A_t^\appr,\Phi_t^\appr)+ L_t \gamma + Q_t(\gamma)
\end{array}
\label{eq:TaylorexpF}
\end{equation}
where, in full detail, 
\begin{align*}
Q_t(\gamma) =& d_{A_t^\appr}(R_{A_t^\appr}(\gamma))+t^2[R_{\Phi_t^\appr}(\gamma)\wedge(\Phi_t^\appr)^*]+ t^2[\Phi_t^\appr\wedge R_{\Phi_t^\appr}(\gamma)^*]\\ 
&+\frac 12 [((\bar\partial_{A_t^\appr}-\partial_{A_t^\appr})\gamma+R_{A_t^\appr}(\gamma))\wedge ((\bar\partial_{A_t^\appr}-\partial_{A_t^\appr})\gamma + R_{A_t^\appr}(\gamma))]\\
&+t^2[([\Phi_t^\appr,\gamma]+R_{\Phi_t^\appr}(\gamma))\wedge([\Phi_t^\appr,\gamma]+R_{\Phi_t^\appr}(\gamma))^*].
\end{align*}

\begin{lemma}\label{lem:growthnormA}
The approximate solution satisfies 
\[
\|A^\appr_t\|_{C^1} \leq C t
\]
on the disk $D_1(0)$, so that for any $H^{k+1}$ section $\gamma$, $k = 0, 1$,
\[
\|d_{A^\appr_t}\gamma\|_{H^k}\leq Ct\|\gamma\|_{H^{k+1}},
\]
and moreover, 
\[
\|L_t \gamma \|_{L^2} \leq C t^2 \| \gamma \|_{H^2}.
\]
\end{lemma}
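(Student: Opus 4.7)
The plan is to bound $\|A_t^\appr\|_{C^1}$ by treating separately the inner disk $D_{1/2}(0)$, where $A_t^\appr$ equals the fiducial connection $A_t^\fid$, and the annulus $D_1(0)\setminus D_{1/2}(0)$, where $A_t^\appr = A_\infty^{\exp(\chi\gamma_t)}$. On the annulus the limiting configuration $A_\infty$ is smooth and $t$-independent, the cut-off $\chi$ is $t$-independent, and $\gamma_t = \tfrac12 h_t\,\sigma_1$ together with all its derivatives decays at exponential rate $e^{-\delta t}$ by \cref{f_t-h_t-function}(d). Hence $\|A_t^\appr\|_{C^1}$ is uniformly bounded on the annular region.

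The substantive estimate is on $A_t^\fid$. Writing $A_t^\fid = 2 f_t(r)\,\tau_1\,d\theta$ (using the identity $dz/z - d\bar z/\bar z = 2i\,d\theta$), the pointwise norm in the background Euclidean metric is $|A_t^\fid| = 2 f_t(r)/r$, which by \cref{f_t-h_t-function}(c) is bounded by $C t^{2/3}$. For the covariant derivative I would use the rescaling $\rho = \tfrac{8}{3}tr^{3/2}$, so that $r\partial_r = \tfrac{3}{2}\rho\partial_\rho$, together with the asymptotic expansions \eqref{proph} and \eqref{eq:asymptf} for $\psi$ and $\eta$. These imply that $\eta(\rho)/\rho^{2/3}$ and its $\rho$-derivatives are uniformly bounded in $\rho$, from which a chain-rule computation gives $|\nabla A_t^\fid| \leq C t^{4/3}$ and hence the polynomial bound $\|A_t^\appr\|_{C^1} \leq C t^{4/3}$; this suffices for all later applications and is consistent with the stated $Ct$ up to the exponent.

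Granted the $C^1$-bound, the two remaining inequalities follow from standard product estimates. For $\|d_{A_t^\appr}\gamma\|_{H^k}\leq Ct\|\gamma\|_{H^{k+1}}$, expand $d_{A_t^\appr}\gamma = d\gamma + [A_t^\appr,\gamma]$ and apply the Sobolev multiplication bound
\[
\|[A,\gamma]\|_{H^k}\leq C\|A\|_{C^k}\|\gamma\|_{H^k}, \qquad k\in\{0,1\}.
\]
For $\|L_t\gamma\|_{L^2}\leq Ct^2\|\gamma\|_{H^2}$, decompose $L_t = \Delta_{A_t^\appr} + t^2 M_{\Phi_t^\appr}$: the potential term is bounded by $Ct^2\|\gamma\|_{L^2}$ via the uniform bound $|M_{\Phi_t^\appr}|\leq C$ from \cref{emmviehtee} (applicable since $\Phi_t^\appr = \Phi_t^\fid$ on $D_{1/2}(0)$ and differs from $\Phi_\infty$ by terms of size $e^{-\delta t}$ on the annulus), while the Laplacian expands schematically as $\Delta_A\gamma = \Delta\gamma + A\cdot\nabla\gamma + (\nabla A + A^2)\cdot\gamma$, which under the $C^1$-bound on $A_t^\appr$ contributes at most $Ct^{4/3}\|\gamma\|_{H^2} \leq Ct^2\|\gamma\|_{H^2}$. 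The main technical obstacle is the Painlevé-type bookkeeping in the second step, where each $r$-derivative introduces an additional factor of $t^{1/3}$ through the chain rule; the remainder of the argument is routine multiplication in Sobolev spaces.
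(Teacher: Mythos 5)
Your proposal follows essentially the same route as the paper: everything is reduced to the radial coefficient of the connection via the substitution $\rho=\tfrac83 tr^{3/2}$, the Painlev\'e asymptotics \eqref{proph}, \eqref{eq:asymptf} and \cref{f_t-h_t-function}, and then the two displayed Sobolev estimates are routine consequences (with the potential term handled by \cref{emmviehtee}, exactly as you do). The paper does not even split off the annulus: it writes $A^\appr_t=f_{\chi,t}\,\tau_1\,d\theta$ with $f_{\chi,t}=\tfrac18+\tfrac14 r\partial_r(\chi h_t)$ and observes that $f_{\chi,t}$ has the same asymptotics as $f_t$, so your inner/annulus decomposition is a harmless repackaging of the same facts.

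The one genuine discrepancy is the norm bookkeeping, and it cuts the opposite way from how you present it. The paper's proof bounds the $d\theta$-coefficient: $f_{\chi,t}=O(1)$ and $\partial_r f_t=4tr^{1/2}\eta'(\rho)=O(t)$ since $\eta'$ is bounded; that is what its ``$\|A^\appr_t\|_{C^1}\leq Ct$'' means. You instead compute honest Euclidean pointwise norms, picking up $|d\theta|=1/r$, and therefore get $\sup|A^\fid_t|\leq Ct^{2/3}$ but $\sup|\nabla A^\fid_t|\leq Ct^{4/3}$ (driven by $f_t/r^2$ and $(\partial_r f_t)/r$ near $r\sim t^{-2/3}$, where these really are of size $t^{4/3}$). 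Since $t^{4/3}\gg t$, this is a \emph{weaker} conclusion than the lemma states, not ``consistent up to the exponent''; so as written you have not proved the lemma verbatim, only a polynomially weaker version of it. You are right that this is harmless downstream: the third estimate $\|L_t\gamma\|_{L^2}\leq Ct^2\|\gamma\|_{H^2}$ survives because $t^{4/3}\leq t^2$, and in \cref{thm:nonlinearbvaluedisk} only \emph{some} polynomial growth matters (the exponent $m$ in the contraction radius $t^{-m}$ is unspecified), so the Lipschitz constant for $Q_t$ and the radius merely pick up extra powers of $t^{1/3}$. If you want the stated $Ct$ (or better) with your honest norms, note that the $k=1$ case of the middle estimate does not require $\sup|\nabla A^\appr_t|$: pair $\nabla A^\appr_t$ in $L^2$ (its $L^2$ norm on the disk is only $O(t^{2/3})$, by integrating $|\nabla A^\fid_t|^2\,r\,dr$ in the $\rho$ variable) against $\|\gamma\|_{L^\infty}\lesssim\|\gamma\|_{H^2}$, and use $\|A^\appr_t\|_{L^\infty}\leq Ct^{2/3}$ for the term $[A^\appr_t,\nabla\gamma]$; alternatively, simply adopt the paper's coefficient-wise reading of the $C^1$ norm, which is all its own proof establishes.
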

\begin{proof}
We have
\[
A^\appr_t=f_{\chi,t}\begin{pmatrix} i & 0 \\ 0 & -i \end{pmatrix} d\theta,
\]
where $f_{\chi,t}(r)=\tfrac 18+\tfrac 14 r\partial_r(\chi h_t)(r)$, see \cref{appr}. Clearly $f_{\chi,t}$ has the same asymptotics 
as $f_t$; thus $f_{\chi,t}$ is uniformly bounded in $t$. 

Now recall from~\cref{des.fid.sol} that $f_t(r)=\eta(\rho)$, $\rho=\tfrac{8t}{3}r^{3/2}$, where $\eta(\rho)=\frac 18 + 
\frac 38\rho \psi'(\rho)$. Then 
\[
\partial_r f_t(r) = 4tr^{1/2}\eta'(\rho),
\]
and we already know that $\eta'(\rho)=\frac{3}{16}\rho\sinh(\psi(\rho))$. Since $\psi(\rho)\sim-\log\rho$ as 
$\rho\to0$ and $\psi(\rho)\sim e^{-\rho}$ as $\rho\to\infty$, we see that 
$\lim_{\rho\to0}\eta'(\rho)=\lim_{\rho\to\infty} \eta'(\rho)=0$. This gives that $0\leq\eta'(\rho)\leq C_0$ for some constant 
$C_0>0$. Altogether, 
\[
|\partial_r f_t|=|\tfrac 14\partial_r h_t+\tfrac{r}{4}\partial_r^2h_t|\leq C_1t
\]
for some constant $C_1>0$ which also yields the desired estimate for $|\partial_r f_{\chi,t}|$.
\end{proof}

\begin{lemma}
There exists a constant $C>0$ such that 
\begin{equation}\label{eq:Lipschitzhigherorder}
\|Q_t(\gamma_1)-Q_t(\gamma_0)\|_{L^2}\leq C \rho t^2\|\gamma_1-\gamma_0\|_{H^2}
\end{equation}
for all $0<\rho\leq1$ and  $\gamma_0$, $\gamma_1\in B_{\rho}$.
\end{lemma}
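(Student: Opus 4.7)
The plan is a term-by-term inspection of the expression for $Q_t(\gamma)$. Each of its five summands is at least quadratic in $\gamma$ (once expanded as a convergent power series in $\gamma$ and $d_{A_t^\appr}\gamma$), so upon forming the difference $Q_t(\gamma_1)-Q_t(\gamma_0)$ each term factorises as a product involving $(\gamma_1-\gamma_0)$ and at least one further factor of $\gamma_s$ taken on the interpolating segment $\gamma_s=(1-s)\gamma_0+s\gamma_1$. The Sobolev estimate $\|\gamma_s\|_{L^\infty}\leq C\|\gamma_s\|_{H^2}\leq C\rho$ in dimension two then produces the prefactor $\rho$, while the powers of $t$ will come from (i) the explicit $t^2$ in front of the Higgs-field terms and (ii) Lemma~\ref{lem:growthnormA}, which gives $\|d_{A_t^\appr}\gamma\|_{H^k}\leq Ct\|\gamma\|_{H^{k+1}}$ for $k=0,1$.

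First I would treat the Higgs-field terms. Write $R_{\Phi_t^\appr}(\gamma)=\sum_{k\geq 2}\tfrac{(-1)^k}{k!}\ad(\gamma)^k\Phi_t^\appr$; since $\|\Phi_t^\appr\|_{L^\infty}$ is uniformly bounded in $t$ (by Lemma~\ref{lem:MtL2bound} on the interior discs and by exponential closeness to $\Phi_\infty$ elsewhere) and $\|\gamma\|_{L^\infty}\leq C\rho\leq C$, this series converges uniformly, with pointwise Lipschitz bound
\[
|R_{\Phi_t^\appr}(\gamma_1)-R_{\Phi_t^\appr}(\gamma_0)|\leq C(|\gamma_0|+|\gamma_1|)\,|\gamma_1-\gamma_0|.
\]
Pairing with $\Phi_t^\appr$, using the $t^2$ prefactor, and applying $H^2\hookrightarrow L^\infty$, this yields the required bound $Ct^2\rho\|\gamma_1-\gamma_0\|_{H^2}$ for the two summands involving $R_{\Phi_t^\appr}$. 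The summand quadratic in $[\Phi_t^\appr,\gamma]+R_{\Phi_t^\appr}(\gamma)$ is handled similarly by expanding and using the multiplication $H^2\cdot H^2\to L^\infty$.

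Next I would treat the connection terms. The BCH-type identity
\[
R_{A_t^\appr}(\gamma)=e^{-\gamma}(\bar\partial_{A_t^\appr}e^{\gamma})-(\partial_{A_t^\appr}e^{\gamma})e^{-\gamma}-(\bar\partial_{A_t^\appr}-\partial_{A_t^\appr})\gamma
\]
is an absolutely convergent series (for $\|\gamma\|_{L^\infty}\leq 1$), each of whose terms is a nested commutator of $\gamma$'s containing exactly one covariant derivative $d_{A_t^\appr}\gamma$, with the lowest nontrivial order being two. Using the multiplications $H^2\cdot H^1\to H^1$ and $H^2\cdot H^2\to H^2$ together with $\|d_{A_t^\appr}\gamma\|_{H^1}\leq Ct\|\gamma\|_{H^2}$, one obtains
\[
\|R_{A_t^\appr}(\gamma_1)-R_{A_t^\appr}(\gamma_0)\|_{H^1}\leq Ct\rho\,\|\gamma_1-\gamma_0\|_{H^2},
\]
and applying $d_{A_t^\appr}$ once more costs a further factor $t$, giving the claimed $t^2\rho$ rate for $d_{A_t^\appr}(R_{A_t^\appr}(\gamma_1)-R_{A_t^\appr}(\gamma_0))$. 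The commutator square $\tfrac12[((\bar\partial_{A_t^\appr}-\partial_{A_t^\appr})\gamma+R_{A_t^\appr}(\gamma))\wedge(\,\cdot\,)]$ is handled using the continuous multiplication $H^1\cdot H^1\to L^2$ in dimension two: expanding and differencing produces a sum of products of $d_{A_t^\appr}\gamma_s$ and $d_{A_t^\appr}(\gamma_1-\gamma_0)$, each controlled in $H^1$ with one factor of $t$, for a combined $t^2$.

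The main obstacle is the bookkeeping: verifying that the infinite series issuing from the exponential and BCH expansions converge in the appropriate Sobolev spaces uniformly on $B_\rho$ for $\rho\leq 1$, and tracking that neither an extra power of $t$ nor a worse power of $\rho$ slips in. This is controlled uniformly by the first few leading terms, because every higher-order term carries at least one additional factor of $\|\gamma_s\|_{L^\infty}\leq C\rho$ without introducing any new derivative, hence no new factor of $t$.
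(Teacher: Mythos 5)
Your proposal is correct and follows essentially the same route as the paper: expand the exponentials to obtain Lipschitz bounds $\|R_{A}(\gamma_1)-R_{A}(\gamma_0)\|_{H^1}\leq Ct\rho\|\gamma_1-\gamma_0\|_{H^2}$ and the analogous (even $t$-free) bound for $R_{\Phi}$, then estimate $Q_t(\gamma_1)-Q_t(\gamma_0)$ term by term, drawing the powers of $t$ from the explicit $t^2$ prefactors and from \cref{lem:growthnormA}, and the factor $\rho$ from Sobolev multiplication/embedding on $B_\rho$. The only cosmetic difference is that you organize the remainder estimates via the $\ad$-series and pointwise bounds where the paper writes $\exp(\gamma)=1+\gamma+S(\gamma)$ and uses $H^2\cdot H^1\to H^1$, which amounts to the same computation.
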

\begin{proof}
The proof has two steps. To simplify notation, write $(A,\Phi)$ for $(A_t^\appr, \Phi_t^\appr)$.

\begin{step}\label{step:remain.terms}
We first check that if $\rho \in (0,1]$ and $\gamma_0, \gamma_1\in B_{\rho}$, then 
\begin{align*}
&\|R_A(\gamma_1)-R_A(\gamma_0) \|_{H^1}\leq Ct\rho\|\gamma_1 -\gamma_0\|_{H^2}\\
&\|R_\Phi(\gamma_1) - R_\Phi(\gamma_0) \|_{H^1} \leq Ct\rho\|\gamma_1 -\gamma_0\|_{H^2}. 
\end{align*}
We begin by estimating the difference of the first two terms on the right in \eqref{eq:remainderRA}: 
\begin{multline*}
\big\|\exp(-\gamma_1) (\bar\partial_A(\exp \gamma_1))-\exp(-\gamma_0)(\bar\partial_A(\exp \gamma_0))-\bar\partial_A 
(\gamma_1-\gamma_0) \big\|_{H^1} \\[0.5em]
\leq \| (\exp(-\gamma_1)-\exp(-\gamma_0))\bar\partial_A(\exp(\gamma_1))\|_{H^1} \\[0.5em]
+\|\exp(-\gamma_0)\big(\bar\partial_A(\exp(\gamma_1)-\exp(\gamma_0))\big)-\bar\partial_A(\gamma_1-\gamma_0)\|_{H^1} 
:= \mathrm{I} + \mathrm{II}.
\end{multline*}
Writing $\exp(\gamma)=1+\gamma+S(\gamma)$, then we have 
\begin{align*}
\|\mathrm{I}\|_{H^1}&\leq C_0\|\exp(-\gamma_1)-\exp(-\gamma_0)\|_{H^2}\|\bar\partial_A(\exp(\gamma_1))\|_{H^1}\\
&\leq C_1t\|\gamma_1-\gamma_0\|_{H^2}\|\gamma_1+S(\gamma_1)\|_{H^2}\\
&\leq C_2t\rho\|\gamma_1-\gamma_0\|_{H^2},
\end{align*}
and similarly, 
\begin{align*}
\|\mathrm{II}\|_{H^1}=&\,\|(1-\gamma_0+S(-\gamma_0))\big(\bar\partial_A(\gamma_1-\gamma_0+S(\gamma_1)-S(\gamma_0)\big)-\bar\partial_A(\gamma_1-\gamma_0)\|_{H^1}\\
\leq&\,\|\bar\partial_A(S(\gamma_0)-S(\gamma_1))\|_{H^1}\\
&+\|(-\gamma_0+S(-\gamma_0))\bar\partial_A(\gamma_0-\gamma_1+S(\gamma_0)-S(\gamma_1))\|_{H^1}\\
\leq&\, C_0t\|S(\gamma_0)-S(\gamma_1)\|_{H^2}\\
&+C_0t\| -\gamma_0+S(-\gamma_0)\|_{H^2}\|\gamma_0-\gamma_1+S(\gamma_0)-S(\gamma_1)\|_{H^2}\\
\leq&\,C_1t\rho\|\gamma_1-\gamma_0\|_{H^2},
\end{align*}
where we have estimated $\|S(\gamma_0)-S(\gamma_1)\|_{H^2}\leq\|\gamma_0-\gamma_1\|_{H^2}\sum_{k\geq1}\rho^k/k! 
\leq C\rho\|\gamma_0-\gamma_1\|_{H^2} $. These estimates together with analogous ones for the terms involving 
$\partial_{A_t}$ give the stated Lipschitz estimate for $R_{A_t}$. The corresponding estimate for 
\[
R_\Phi= \exp(-\gamma)\Phi\exp\gamma-[\Phi,\gamma]-\Phi 
\]
and the estimates
\[
\|R_A(\gamma)\|_{H^1}\leq Ct\rho,\quad\|R_\Phi(\gamma)\|_{H^1}\leq C\rho, \qquad \gamma\in B_\rho, 
\]
follow in the same way. 
\end{step}

\begin{step}
We can now prove the claim. First,
\begin{equation}
\label{eq:LipschitzQ}
\begin{aligned}
Q_t(\gamma_1) - & Q_t(\gamma_0) = d_{A} (R_{A}(\gamma_1) - R_{A}(\gamma_0))\\[0.5ex]
&+ t^2 [ (R_{\Phi}(\gamma_1) - R_{\Phi}(\gamma_0)) \wedge \Phi^*] + t^2 [\Phi \wedge (R_{\Phi}(\gamma_1) - R_{\Phi}(\gamma_0))^*]\\[0.5ex]
&+ \tfrac 12 [((\bar\partial_{A}-\partial_{A})\gamma_1 + R_{A}(\gamma_1)) \wedge ((\bar\partial_{A}-\partial_{A})\gamma_1 + 
R_{A}(\gamma_1))]\\[0.5ex]
&- \tfrac 12 [((\bar\partial_{A}-\partial_{A})\gamma_0 + R_{A}(\gamma_0)) \wedge ((\bar\partial_{A}-\partial_{A})\gamma_0 + 
R_{A}(\gamma_0))]\\[0.5ex]
&+t^2 [([\Phi, \gamma_1] + R_{\Phi}(\gamma_1)) \wedge ([\Phi, \gamma_1] + R_{\Phi}(\gamma_1))^* ]\\[0.5ex]
&-t^2 [([\Phi, \gamma_0] + R_{\Phi}(\gamma_0)) \wedge ([\Phi, \gamma_0] + R_{\Phi}(\gamma_0))^* ]. 
\end{aligned}
\end{equation}
By \cref{lem:growthnormA}, 
\[
\|d_{A_t} (R_{A_t}(\gamma_1) - R_{A_t}(\gamma_0))\|_{L^2}\leq C(t+1)\|  R_{A_t}(\gamma_1) - R_{A_t}(\gamma_0)\|_{H^1}
\]
and we then apply Step~\ref{step:remain.terms}. The remaining terms are bilinear combinations $B(\psi,\tau)$ of functions 
$\psi$ and $\tau$ with fixed coefficients, which can be estimated as 
\begin{multline*}
\|B(\psi_1,\tau_1)-B(\psi_0,\tau_0)\|_{L^2}\leq\,\|B(\psi_1-\psi_0,\tau_1)\|_{L^2}+\|B(\psi_0,\tau_1-\tau_0)\|_{L^2}\\
\leq \,C\|\psi_1-\psi_0\|_{H^1}\|\tau_1\|_{H^1} +C\|\psi_0\|_{H^1}\|\tau_1-\tau_0\|_{H^1}.
\end{multline*}
\end{step}
The desired estimate follows from Step~\ref{step:remain.terms} again. 
\end{proof}

\begin{proof}[Proof of \cref{thm:nonlinearbvaluedisk}] From \eqref{eq:TaylorexpF}, 
$$
F_t(\exp(\gamma))=\pr_1\mathcal{H}_t(A_t^{\appr},\Phi_t^{\appr})+L_t\gamma+Q_t(\gamma),
$$
and since $L_t$ is invertible, the solutions of this equation are the same as the solutions of 
\[
\gamma=-L_t^{-1}\big(\pr_1\mathcal{H}_t(A_t^{\appr},\Phi_t^{\appr})+Q_t(\gamma)\big).
\]
Thus consider the map 
\[
T\colon B_\rho\to H^2(i\mf{su}(E)),\quad\gamma\mapsto -L_t^{-1}\big(\pr_1\mathcal{H}_t(A_t^{\appr},\Phi_t^{\appr})+Q_t(\gamma)\big).
\]
We claim that for $\rho$ sufficiently small, $T$ is a contraction of $ B_\rho$, from which we immediately obtain
a unique fixed point $\gamma\in B_\rho$. To prove this, use \cref{lem:globallinest} and \eqref{eq:Lipschitzhigherorder} to get 
\begin{multline*}
\|T(\gamma_1-\gamma_0)\|_{H^2} =\|G_t(Q_t(\gamma_1)-Q_t(\gamma_0))\|_{H^2}\\
\leq Ct^2\|Q_t(\gamma_1)-Q_t(\gamma_0)\|_{L^2} \leq C\rho t^4\|\gamma_1-\gamma_0\|_{H^2}.
\end{multline*}
Thus $T$ is a contraction on the ball of radius $\rho_t=t^{-4-\epsilon}$ for any $\epsilon > 0$. Furthermore, since $Q_t(0)=0$, 
then by \cref{lem:globallinest} and \eqref{lem:approxerror}, 
\[
\|T(0)\|_{H^2}=\|G_t(\pr_1\mathcal{H}_t(A_t^{\appr},\Phi_t^{\appr})\|_{H^2}\leq C_te^{-\delta t}. 
\]
Thus when $t \gg 0$, $\|T(0)\|_{H^2} < \frac{1}{10}\rho_t $, so the ball $B_{\rho_t}$ is mapped to itself by $T$. 
\end{proof}
%
%
%


\begin{thebibliography}{99}
\bibitem[B\"a]{ba00}
{\sc C.\ B{\"a}r}, {\it The Dirac operator on hyperbolic manifolds of finite volume.} J.\ Differential Geom.\ 54 (2000), no.\ 3, 439--488.        
     
\bibitem[BiBo04]{bibo04}
{\sc O.\ Biquard and P.\ Boalch},
{\em Wild non-abelian Hodge theory on curves},
Compos.\ Math.\ {\bf140} (2004), no.\ 1, 179--204. 
  
\bibitem[CeVa93]{ceva93}
{\sc S.\ Cecotti and C.\ Vafa},
{\em On classification of $N=2$ supersymmetric theories}, 
Comm.\ Math.\ Phys.\ {\bf158} (1993), no.\ 3, 569--644.

\bibitem[F]{lafr}
{\sc L.\ Fredrickson}, 
PhD thesis, University of Texas at Austin (in preparation).

\bibitem[Fr99]{fr99}
{\sc D.\ Freed}, 
{\em Special K\"ahler manifolds}, 
Comm.\ Math.\ Phys.\ {\bf203} (1999), 31--52.

\bibitem[GMN10]{gmn10}
{\sc D.\ Gaiotto, G.\ Moore, and A.\ Neitzke},
{\em Four-dimensional wall-crossing via three-dimensional field theory},
Comm.\ Math.\ Phys.\ {\bf299} (2010), no.\ 1, 163--224 .

\bibitem[GMN13]{gmn13}
{\sc D.\ Gaiotto, G.\ Moore, and A.\ Neitzke},
{\em Wall-crossing, Hitchin systems, and the WKB approximation}, 
Adv.\ Math.\ {\bf234} (2013), 239--403. 

\bibitem[Go12]{go12}
{\sc P.\ Gothen},
{\em Surface group representations and Higgs bundles}, 
Moduli spaces, London Math.\ Soc.\ Lecture Note Ser.\ {\bf411}, pp.\ 151--178, CUP, 2014.

\bibitem[Ha99]{ha99}
{\sc T.\ Hausel},
{\em Vanishing of intersection numbers on the moduli space of Higgs bundles},
Adv.\ Theor.\ Math.\ Phys.\ {\bf2} (1998), 1011--1040. 

\bibitem[HHM05]{HHM}
{\sc T. Hausel, E. Hunsicker and R. Mazzeo}, 
{\em Hodge cohomology of gravitational instantons}, 
Duke Math. J. {\bf 122}, no. 3, 485--548. 

\bibitem[Hi87]{hi87}
{\sc N.\ Hitchin}, 
{\em The self-duality equations on a Riemann surface}, 
Proc.\ London Math.\ Soc.\ (3) {\bf55} (1987), no.\ 1, 59--126.

\bibitem[Hi99]{hi99}
{\sc N.\ Hitchin},
{\em Riemann surfaces and integrable systems}, 
Oxf.\ Grad.\ Texts\ Math.\ {\bf4}, pp.\ 11--52, OUP, 1999.

\bibitem[Hi00]{hi00}
{\sc N.\ Hitchin}
{\em $L^2$-Cohomology of Hyperk\"ahler Quotients},
Comm.\ Math.\ Phys.\ {\bf211} (200), 153--165.

\bibitem[HKLR87]{hklr87}
{\sc N.\ Hitchin, A.\ Karlhede, U.\ Lindstr\"om, M.\ Ro\v cek},
{\em Hyper-K\"ahler metrics and supersymmetry},
Comm.\ Math.\ Phys.\ {\bf108} (1987), no.\ 4, 535--589. 

\bibitem[Ko87]{ko87}
{\sc S.\ Kobayashi}, 
{\em Differential geometry of complex vector bundles}, 
Publications of the MSJ, 15. Kan\^o Memorial Lectures, 5. Princeton University Press, Princeton, NJ; Iwanami Shoten, Tokyo, 1987.

\bibitem[LeP92]{po92}
{\sc J.\ Le Potier},
{\em Fibr\'es de Higgs et syst\`emes locaux}, S\'eminaire Bourbaki, Vol. 1990/91. Ast\'erisque No.\ 201-203 (1991), Exp.\ No.\ 737, 221--268 (1992).

\bibitem[LeP97]{po97}
{\sc J.\ Le Potier}, 
{\em Lectures on vector bundles}, 
Cambridge Studies in Advanced Mathematics, {\bf54}, Cambridge University Press, Cambridge, 1997.

\bibitem[Ma91]{ma91}
{\sc R.\ Mazzeo}, 
{\em The elliptic theory of differential edge operators. I.},
Comm.\ Partial Differential Equations {\bf16} (1991), no.\ 10, 1615--1664.
 
\bibitem[MaMo11]{mamo11}
{\sc R.\ Mazzeo and G.\ Montcouquiol},
{\em Infinitesimal rigidity of cone-manifolds and the Stoker problem for hyperbolic and Euclidean polyhedra},
J.\ Differential Geom.\ {\bf87} (2011), no.\ 3, 525--576.

\bibitem[MaWo93]{mawo93}
{\sc L.\ Mason, and N.\ Woodhouse},
{\em Self-duality and the Painlev\'e transcendents}, 
Nonlinearity {\bf6} (1993), no.\ 4, 569--581.

\bibitem[MTW77]{mtw77}
{\sc B.\ McCoy, C.\ Tracy and T.\ Wu},
{\em Painlev\'e functions of the third kind}.
J.\ Mathematical Phys.\ {\bf18} (1977), no.\ 5, 1058--1092.

\bibitem[Mi84]{mi84}
{\sc J.\ Milnor},
{\em Remarks on infinite-dimensional Lie groups} 
in: Relativity, groups and topology, II (Les Houches, 1983), 1007--1057, North-Holland, Amsterdam, 1984.

\bibitem[Ni00]{ni00}
{\sc L.\ Nicolaescu},
{\em Notes on Seiberg-Witten theory},
GSM {\bf28}, AMS, Providence, RI, 2000.

\bibitem[Ni91]{ni91}
{\sc N.\  Nitsure}, 
{\em Moduli space of semistable pairs on a curve},
Proc.\ London Math.\ Soc.\ (3) {\bf62} (1991), no.\ 2, 275--300.
 
\bibitem[Sc04]{sc04}
{\sc R.\ Schmid}, 
{\em Infinite dimensional Lie groups with applications to mathematical physics}, 
J.\ Geom.\ Symmetry Phys.\ {\bf1} (2004), 54--120.

\bibitem[Se94]{se94}
{\sc A.\ Sen},
{\em Dyon-monopole bound states, self-dual harmonic forms on the multi-monopole moduli space,
and $\SL(2,\Z)$-invariance of string theory},
Phys.\ Lett.\ B {\bf329} (1994), 217--221.

\bibitem[Si88]{si88}
{\sc C.\ Simpson},
{\em Constructing variations of Hodge structure using Yang-Mills theory and applications to uniformization}, 
J.\ Amer.\ Math.\ Soc.\ {\bf1} (1988), no.\ 4, 867--918. 

\bibitem[Si92]{si92}
{\sc C.\ Simpson},
{\em Higgs bundles and local systems},
Inst.\ Hautes \'Etudes Sci.\ Publ.\ Math.\ No.\ {\bf75} (1992), 5--95. 
 
\bibitem[Ta13.1]{ta1}
{\sc C.\ Taubes},
{\em Compactness theorems for $\SL(2;\C)$ generalizations of the $4$-dimensional anti-self dual equations, Part I},
preprint 2013, {\sf arXiv:1307.6447}.

\bibitem[Ta13.2]{ta2}
{\sc C.\ Taubes},
{\em Compactness theorems for $\SL(2;\C)$ generalizations of the $4$-dimensional anti-self dual equations, Part II},
preprint 2013, {\sf arXiv:1307.6451}.

\bibitem[WGP08]{wgp08}
{\sc R.\ Wells},
{\em Differential analysis on complex manifolds. With a new appendix by Oscar Garcia-Prada},
GTM {\bf65}, Springer, New York, 2008. 

\bibitem[Wi01]{wi01}
{\sc H.\ Widom},
{\em On the solution of a Painlev\'e III equation}, 
Math.\ Phys.\ Anal.\ Geom.\ {\bf3} (2000), no.\ 4, 375--384 (2001).
\end{thebibliography}
\end{document}